\numberwithin{equation}{section}
\newtheorem{theorem}{Theorem}[section]
\newtheorem{lemma}[theorem]{Lemma}
\newtheorem{corollary}[theorem]{Corollary}
\newtheorem{proposition}[theorem]{Proposition}
\theoremstyle{definition}
\newtheorem{definition}[theorem]{Definition}
\newtheorem{example}[theorem]{Example}
\newtheorem{remark}[theorem]{Remark}
\DeclareMathOperator{\re}{Re}
\DeclareMathOperator{\im}{Im}
\DeclareMathOperator{\Arg}{Arg}
\DeclareMathOperator{\arccot}{arccot}
\newcommand{\C}{\mathbb{C}}
\newcommand{\R}{\mathbb{R}}
\newcommand{\Z}{\mathbb{Z}}
\newcommand{\N}{\mathbb{N}}
\newcommand{\ph}{\varphi}
\newcommand{\eps}{\varepsilon}
\newcommand{\thet}{\vartheta}
\newcommand{\ro}{\varrho}
\newcommand{\ind}{\mathbb{1}}
\newcommand{\rf}[1]{^{\overline{#1}}}
\renewcommand{\le}{\leqslant}
\renewcommand{\ge}{\geqslant}
\mathchardef\mathhyphen="2D
\newcommand{\cm}{\mathscr{C}\mspace{-4mu}\mathscr{M}}
\newcommand{\am}{\mathscr{A}\mspace{-4mu}\mathscr{M}}
\newcommand{\amcm}{\am\mspace{-2mu}\mathhyphen\mspace{-1mu}\cm}
\NewDocumentCommand{\formula}{ssom}{%
 \IfBooleanTF{#1}{%
  \IfBooleanTF{#2}{%
   \IfValueTF{#3}%
    {\begin{align}\label{#3}\begin{gathered}#4\end{gathered}\end{align}}%
    {\begin{gather}#4\end{gather}}%
  }{%
   \IfValueTF{#3}%
    {\begin{align}\label{#3}\begin{aligned}#4\end{aligned}\end{align}}%
    {\begin{gather*}#4\end{gather*}}%
  }%
 }{%
  \IfValueTF{#3}%
   {\begin{align}\label{#3}#4\end{align}}%
   {\begin{align*}#4\end{align*}}%
 }%
}
\begin{document}

\title[Two-sided bell-shaped sequences]{Two-sided bell-shaped sequences}
\author{Mateusz Kwaśnicki, Jacek Wszoła}
\thanks{Work supported by the Polish National Science Centre (NCN) grant no.\@ 2019/33/B/ST1/03098}
\address{Mateusz Kwaśnicki, Jacek Wszoła \\ \normalfont Department of Analysis and Stochastic Processes \\ Faculty of Pure and Applied Mathematics \\ Wrocław University of Science and Technology \\ ul. Wybrzeże Wyspiańskiego 27 \\ 50-370 Wrocław, Poland}
\email{mateusz.kwasnicki@pwr.edu.pl{\normalfont,} jacek.wszola@pwr.edu.pl}
\date{\today}
\keywords{Bell-shape, Pólya frequency sequence, completely monotone sequence, generating function, Pick function, random walk}
\subjclass[2020]{
 40A05, 
 26A51, 
 39A70, 
 60E10, 
 60E07
}

\begin{abstract}
A nonnegative real function $f$ is \emph{bell-shaped} if it converges to zero at $\pm\infty$ and the $n$th derivative of $f$ changes sign $n$ times for every $n = 0, 1, 2, \ldots$\, Similarly, a nonnegative sequence $(a(k) : k \in \Z)$ is \emph{bell-shaped} if it converges to zero at $\pm\infty$ and the $n$th iterated difference of $a(k)$ changes sign $n$ times for every $n = 0, 1, 2, \ldots$\, A characterisation of bell-shaped functions was given by Thomas Simon and the first named author, and recently a similar result for one-sided bell-shaped sequences was found by the authors. In the present article we give a complete description of two-sided bell-shaped sequences. Our main result proves that bell-shaped sequences are convolutions of \emph{Pólya frequency sequences} and what we call \emph{absolutely monotone-then-completely monotone sequences}, and it provides an equivalent, and relatively easy to verify, condition in terms of holomorphic extensions of the generating function. We also prove that if $f$ is a bell-shaped function, then $f(k)$ is a bell-shaped sequence.
\end{abstract}

\maketitle

%
%

\section{Introduction}
\label{sec:intro}

\subsection{History of bell-shape in a nutshell}

The notion of a bell-shaped function has been present in mathematical literature since 1940s when it was introduced in the context of statistical games (see Section~6.11.C in~\cite{karlin}). A nonnegative, smooth real function $f$ is \textit{bell-shaped} if it converges to zero at $\pm\infty$ and its $n$th derivative $f^{(n)}$ changes sign exactly $n$ times for every $n = 0, 1, 2, \ldots\,$ Many common probability distributions have bell-shaped density functions, including the normal distribution $(2 \pi)^{-1/2} \exp(-x^2/2)$, the Cauchy distribution $\pi^{-1} (1 + x^2)^{-1}$, and the Lévy distribution $(2 \pi)^{-1/2} \exp(1 / (2 x)) \ind_{(0, \infty)}(x)$. In fact, as proved in~\cite{kwasnicki}, all stable distributions have bell-shaped densities; see also~\cite{gawronski,simon}. There are no compactly supported bell-shaped functions~\cite{hirschman}. Density functions of hitting times of 1-D diffusion processes are examples of one-sided (that is, supported in a half-line) bell-shaped functions~\cite{js}. A complete characterisation of the class of bell-shaped functions was given in~\cite{ks}. As a corollary, it follows that probability distributions with bell-shaped density functions are necessarily infinitely divisible.

At this point, a natural question arises: is there a discrete analogue of the theory of bell-shaped functions? In other words, can one prove similar results for appropriately defined bell-shaped sequences? This problem was tackled in the authors' previous work~\cite{kw}, where the following definition was introduced. A nonnegative two-sided sequence $(a(k) : k \in \Z)$ is \textit{bell-shaped} if it converges to zero at $\pm \infty$ and the sequence of its $n$th iterated differences $(\Delta^n a(k))$ changes sign exactly $n$ times for every $n = 0,1, 2, \ldots$

A one-sided sequence $(a(k) : k \in \N)$ (where $\N = \{0, 1, 2, \ldots\}$) can be identified with the corresponding two-sided sequence satisfying $a(k) = 0$ for $k < 0$. The methods used in~\cite{kw} only allowed to characterise one-sided bell-shaped sequences. Theorem~1.1 in~\cite{kw} provides two equivalent conditions for a one-sided sequence to be bell-shaped, in terms of the holomorphic extension of the generating function, or in terms of Pólya frequency sequences and completely monotone sequences. More precisely, the former condition requires that the generating function is the exponential of a Pick function with appropriate boundary values. The latter one asserts that every one-sided bell-shaped sequence is the convolution of a summable Pólya frequency sequence and a completely monotone sequence which converges to zero. As a corollary, all discrete stable distributions (see~\cite{sv}) have one-sided bell-shaped probability mass functions.

The above results are completely analogous to those available for one-sided bell-shaped functions. However, the theory of bell-shaped sequences is not entirely parallel to its continuous counterpart: while there are no compactly supported bell-shaped functions, many finitely supported sequences are bell-shaped. For example, binomial distributions have bell-shaped probability mass functions $\binom{n}{k} p^k (1 - p)^{n - k} \ind_{\{0, 1, \ldots, n\}}(k)$.

The main purpose of the present work is to extend the results of~\cite{kw} and characterise all two-sided bell-shaped sequences; see Theorem~\ref{thm:main}. The major difference, and hence the main difficulty, lies in the fact that for one-sided bell-shaped sequences, the generating function is well defined and holomorphic in the unit disc in the complex plane, and standard inversion formulae apply. On the other hand, for two-sided sequences the generating function is only defined on the unit circle. Furthermore, generating functions of two-sided bell-shaped sequences are no longer exponentials of Pick functions. To overcome these difficulties, we need to develop a novel inversion formula, introduce and study a new class of holomorphic functions, and adjust appropriately the idea of the proof developed in~\cite{kwasnicki,ks,kw}.

In Theorem~\ref{thm:sample} we show that a variety of bell-shaped sequences arise by sampling bell-shaped functions. This direct link between the discrete and continuous notions of bell-shape is rather unexpected: we know no direct proof of this result, and our argument involves the characterisation of bell-shaped functions from~\cite{ks}.

The term \emph{bell-shaped sequence} apparently has not appeared in literature in the sense defined above prior to~\cite{kw}. Nevertheless, closely related concepts have been around for over a century, and have now become classical subjects. The introduction to~\cite{bgkp} provides an up-to-date comprehensive list of applications of total positivity and Pólya frequency sequences in various areas of mathematics, with references; here we also refer to older monographs~\cite{gm,karlin,pinkus}. Closely related problems about polynomials and entire functions with real zeroes go back to the works of Laguerre, Pólya and Schur. Completely monotone sequences originate in the solution of Hausdorff's moment problem.

We conclude this part with the following observation. Pólya frequency sequences form an important subclass of log-concave sequences. On the other hand, completely monotone sequences are log-convex. Thus, the class of bell-shaped sequences spans between the classes of log-concave and log-convex sequences, and provides an intermediate notion that is tailored for applications in probability (see the examples discussed later in this section) and possibly also in other areas of mathematics.

\subsection{Main results}

Before we state our main theorem, we need auxiliary definitions. 

Following~\cite{kw}, we say that a real function $\ph$ is \emph{stepwise increasing}, if it is integer-valued and nondecreasing. A real function $\ph$ is called \textit{increasing-after-rounding} if there exists a stepwise increasing function $\tilde{\ph}$ such that $\tilde{\ph} \le \ph \le \tilde{\ph} + 1$. \emph{Stepwise decreasing} and \emph{decreasing-after-rounding} functions are defined in an analogous way. It is straightforward to see that $\ph$ is increasing-after-rounding if $\lfloor \ph \rfloor$ or $\lceil \ph \rceil$ are stepwise increasing: in this case we may set $\tilde{\ph} = \lfloor \ph \rfloor$ or $\tilde{\ph} = \lceil \ph \rceil - 1$. The former condition is, however, slightly more general. We remark that in~\cite{kwasnicki,ks}, the term \emph{level crossing condition} is used to describe increasing-after-rounding functions.

A (summable) two-sided \emph{Pólya frequency sequence} is, up to multiplication by a constant, the probability mass function of the sum or difference of at most countably many independent Poissonian, geometric and Bernoulli random variables. A two-sided sequence $a(k)$ is said to be \emph{absolutely monotone-then-completely monotone} if the one-sided sequences $a(k)$ and $a(-k)$ are completely monotone. We refer to Sections~\ref{sec:amcm} and~\ref{sec:pff} for a detailed discussion

\begin{figure}
\includegraphics[width=0.8\textwidth]{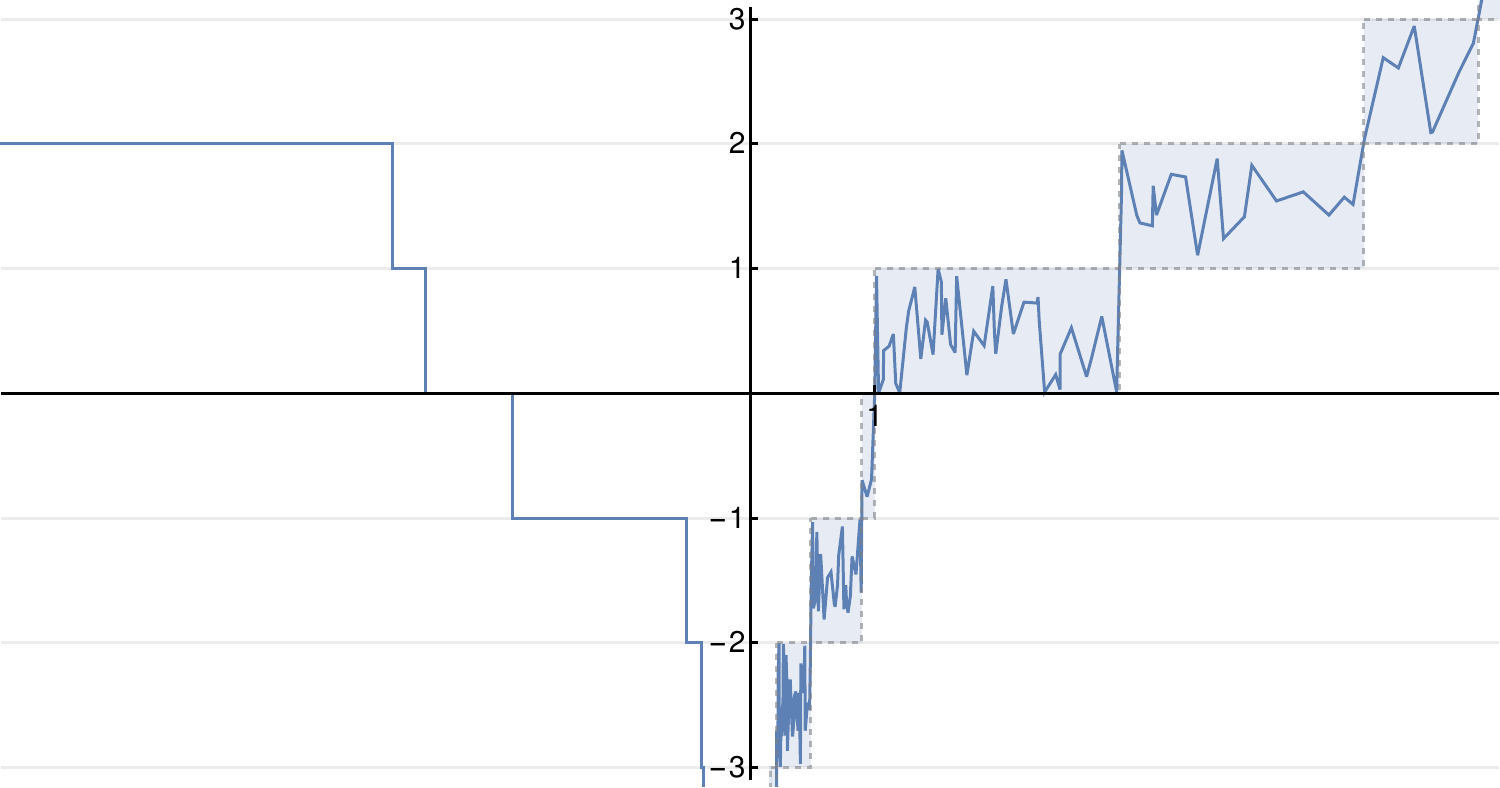}
\caption{A sample function $\ph$ in Theorem~\ref{thm:main}\ref{thm:main:c}.}
\label{fig:bs}
\end{figure}

\begin{theorem}
\label{thm:main}
Suppose that $a(k)$ is a two-sided sequence. The following conditions are equivalent:
\begin{enumerate}[label={\textup{(\alph*)}},leftmargin=2.5em]
\item
\label{thm:main:a}
$a(k)$ is a bell-shaped sequence;
\item
\label{thm:main:b}
$a(k)$ is the convolution of a summable Pólya frequency sequence $b(k)$ and an absolutely monotone-then-completely monotone sequence $c(k)$ which converges to zero as $k \to \pm \infty$;
\item
\label{thm:main:c}
The generating function of $a(k)$,
\formula{
 F(z) & = \sum_{k = -\infty}^\infty a(k) z^k ,
}
converges on the the unit circle $|z| = 1$, except possibly $z = 1$, and $F$ extends to a holomorphic function in the upper complex half plane, given by
\formula[eq:main]{
 F(z) & = \exp\biggl(b^+ z + \frac{b^-}{z} + c + \int_{-\infty}^\infty \biggl(\frac{1}{s - z} - \frac{s}{s^2 + 1}\biggr) \ph(s) ds \biggr)
}
when $\im z > 0$. Here $b^+, b^- \ge 0$, $c \in \R$, and $\ph$ is a Borel function on $\R$ such that (see Figure~\ref{fig:bs}):
 \begin{enumerate}[label={\textup{(\roman*)}},leftmargin=2.5em]
 \item\label{thm:main:c1} $\ph$ is stepwise decreasing on $(-\infty, 0)$;
 \item\label{thm:main:c2} $\ph$ is increasing-after-rounding on $(0, \infty)$;
 \item\label{thm:main:c3} $\ph \le 0$ on $(0, 1)$ and $\ph \ge 0$ on $(1, \infty)$;
 \item\label{thm:main:c4} $\ph$ satisfies the integrability condition
 \formula{
  \int_{-\infty}^\infty \frac{|\ph(s)|}{s^2 + 1} \, ds < \infty ;
 }
 \item\label{thm:main:c5} the function $F$ defined by~\eqref{eq:main} satisfies
 \formula{
  \lim_{t \to 0} (e^{i t} - 1) F(e^{i t}) & = 0 .
 }
 \end{enumerate}
\end{enumerate}
Additionally, every quadruple $b^+, b^-, c, \ph$ satisfying the conditions listed in item~\ref{thm:main:c} corresponds to a unique bell-shaped sequence $a(k)$.
\end{theorem}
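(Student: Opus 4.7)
The plan is to prove the cycle (b)~$\Rightarrow$~(c)~$\Rightarrow$~(a)~$\Rightarrow$~(b), mirroring the strategy developed for the one-sided case in~\cite{kw}, but replacing the disc-based analysis there by a contour argument built on the holomorphic extension of $F$ into the upper half plane. Uniqueness of the decomposition will follow automatically once it is shown that the quadruple $(b^+, b^-, c, \ph)$ can be read off from the boundary behaviour and asymptotics of $F$.

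For (b)~$\Rightarrow$~(c) I would factor $F = F_b \cdot F_c$. The Schoenberg--Edrei product expansion of the Pólya factor $F_b$ (Section~\ref{sec:pff}) writes $\log F_b$ as the sum of $b^+ z + b^-/z$ and logarithms of linear factors, each of which can be represented, modulo additive constants, through an elementary identity of the form
\formula{
 \log\bigl(1 - \tfrac{z}{s_0}\bigr) & = -\int_{s_0}^{\pm\infty}\!\Bigl(\tfrac{1}{s-z} - \tfrac{s}{s^2+1}\Bigr) ds + \mathrm{const},
}
thus contributing the integer-valued component of $\ph$, supported outside $(-1,1)$. The AM-CM factor $F_c$, via the Hausdorff moment representations on the two sides (Section~\ref{sec:amcm}), contributes an absolutely continuous piece of $\ph$ supported in $(-1,0) \cup (0,1)$ with exactly the signs required by~\ref{thm:main:c3}. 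Summing the two contributions produces a function $\ph$ which is stepwise decreasing on $(-\infty,0)$ and increasing-after-rounding on $(0,\infty)$; integrability~\ref{thm:main:c4} comes from summability of $b$ and the finite total mass of the AM-CM measures, while~\ref{thm:main:c5} encodes the decay $c(k) \to 0$.

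For (c)~$\Rightarrow$~(a) the key new ingredient is an inversion formula obtained by collapsing a contour from $\im z > 0$ down onto $|z|=1$. The expected form is
\formula{
 \Delta^n a(k) & = \frac{1}{\pi}\,\im\!\int_0^{\pi} e^{-ikt}(e^{it}-1)^n F(e^{it})\, dt,
}
with $F(e^{it})$ interpreted as the non-tangential boundary value of the holomorphic extension. The structural hypotheses~\ref{thm:main:c1}--\ref{thm:main:c3} on $\ph$ then translate, through the boundary identity relating $\Arg F(e^{it})$ to $\ph$ pulled back by a Cayley-type transform $s = -\cot(t/2)$, into the statement that $\Arg\bigl((e^{it}-1)^n F(e^{it})\bigr)$ is monotone-after-rounding on $(-\pi,\pi)\setminus\{0\}$ and crosses multiples of $\pi$ in a pattern that forces exactly $n$ sign changes in $\Delta^n a(k)$, via a counting argument of the kind used in~\cite{kwasnicki,ks,kw}.

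The principal obstacle is (a)~$\Rightarrow$~(b). First, one must show that $F(z) = \sum a(k) z^k$ converges on $|z|=1\setminus\{1\}$; this should follow from $a(k)\to 0$ and summation by parts, since $\Delta a(k)$ changes sign only once. Second, and most substantially, one must use the total-variation bounds on $\Delta^n a(k)$ forced by bell-shape to construct a holomorphic extension of $F$ into $\im z > 0$ of Nevanlinna--Pick flavour, whose spectral data consist precisely of the increasing-after-rounding / stepwise decreasing function $\ph$ together with point masses $b^\pm$ at the two ends of the real line; this is the ``new class of holomorphic functions'' promised in the introduction, and it requires proving a Stieltjes-type inversion under the level-crossing condition. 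Third, one splits $\ph$ into its integer and fractional parts: the integer part reassembles, via Schoenberg--Edrei, into a Pólya frequency sequence $b(k)$, while the fractional part is the generating function of an AM-CM sequence $c(k)$, yielding $a = b * c$. Uniqueness falls out directly: $b^\pm = \lim_{y \to \infty} y^{-1}\log F(iy)$ and its analogue near $z = 0$, $c$ is a boundary value of $\log F$ at $z = -1$, and $\ph$ is the distributional boundary density of $\pi^{-1}\im\log F$ along $\R$, so the quadruple is entirely determined by $F$ and hence by $a$.
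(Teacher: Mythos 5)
Your cycle orientation differs from the paper's, which proves (b)~$\Rightarrow$~(a), (a)~$\Rightarrow$~(c), (c)~$\Rightarrow$~(b), but that is a matter of bookkeeping, not substance. Your (b)~$\Rightarrow$~(c) matches the paper's argument: factor $F = F_b F_c$, use the Schoenberg--Edrei product for the Pólya factor and the Hausdorff representation for the $\amcm$ factor, and add the two contributions to $\ph$. That part is fine.

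The proposed (c)~$\Rightarrow$~(a) diverges from the paper and is, as written, speculative. The paper does not attempt to count sign changes of $\Delta^n a(k)$ directly from a Fourier-type integral; instead it proves (c)~$\Leftrightarrow$~(b) (Theorem~\ref{thm:gf}) and then (b)~$\Rightarrow$~(a) (Theorem~\ref{thm:amcm:pf}) by combining the discrete Rolle theorem (giving at least $n$ sign changes) with the variation-diminishing property of summable Pólya frequency sequences (Theorem~5.1.5 in~\cite{karlin}), applied to $b * \Delta^n c = \Delta^n(b*c)$. Your claim that the boundary behaviour of $\Arg\bigl((e^{it}-1)^n F(e^{it})\bigr)$ ``forces exactly $n$ sign changes via a counting argument'' is the one place where a direct counting argument has \emph{not} been carried through in the two-sided setting, and it is not obvious it can be made to work: the argument of $F(e^{it})$ accumulates contributions from both the $b^\pm$ terms and the non-monotone $\ph$, and the level-crossing pattern is not simply monotone-after-rounding once transported to the circle. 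The factorisation route through (b) sidesteps this entirely.

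The decisive gap is in your sketch of the hard implication (your (a)~$\Rightarrow$~(b); the paper's (a)~$\Rightarrow$~(c)). Your description --- ``use the total-variation bounds on $\Delta^n a(k)$ forced by bell-shape to construct a holomorphic extension of Nevanlinna--Pick flavour... requires proving a Stieltjes-type inversion under the level-crossing condition'' --- restates the goal without supplying the two technical ingredients that actually make it work and that the paper emphasises as its new contributions. First, because $F$ is defined only on $|z|=1$, the one-sided moment-sequence inversion of~\cite{kw} is unavailable; the paper replaces it with a genuinely new inversion formula (Proposition~\ref{prop:post}), recovering $F(e^{it})$ as a limit of $\frac{(-1)^n (x_n)\rf{n+1}}{i\,n!}\Delta^n G(x_n)$ for the auxiliary transform $G(x) = \int_0^\infty e^{itx} F(e^{it})\,dt$ evaluated at the \emph{complex} points $x_n = \frac{n}{2}\bigl(i\cot\frac{t}{2} - 1\bigr)$, with a careful choice of sign-normalising polynomials $P_n$ built from the sign-change locations of $\Delta^n a(k-n)$. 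Second, because the representing measures here are \emph{signed} and change sign twice (once at an undetermined location), the resulting $F_n$ are not exponentials of Pick functions and the standard Pick-function compactness does not apply; the paper proves a bespoke closure result (Lemma~\ref{lem:compact}) to pass to the limit. Without identifying these two steps your plan has no mechanism to actually produce the holomorphic extension or the function $\ph$, so (a)~$\Rightarrow$~(b) remains unproved. Your uniqueness remark at the end is, by contrast, essentially correct and matches how the parameters are pinned down.
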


Theorem~\ref{thm:main} is proved in Section~\ref{sec:proof}. For similar representations of absolutely monotone-then-completely monotone sequences and Pólya frequency sequences, see Lemmas~\ref{lem:amcm} and~\ref{lem:pf}, respectively. Factorisation of a bell-shaped sequence into the convolution of one-sided bell-shaped sequences is discussed in Corollary~\ref{cor:wh}.

\begin{remark}
\label{rem:c5}
There seems to be no simple way to rewrite condition~\ref{thm:main:c5} in Theorem~\ref{thm:main}\ref{thm:main:c} directly in terms of the function $\ph$. Nevertheless, it is a very natural condition, which, roughly speaking, corresponds to the fact that $\sum_{k = -\infty}^\infty \Delta a(k) = 0$. More precisely, we have the following two sufficient conditions for condition~\ref{thm:main:c5}.
\begin{itemize}[leftmargin=2.5em]
\item If $F$ is the generating function of a summable two-sided sequence $a(k)$, then $F$ is continuous on the unit circle $|z| = 1$, and hence condition~\ref{thm:main:c5} is automatically satisfied.
\item If $a(k)$ converges eventually monotonically to zero as $k \to \infty$ and as $k \to -\infty$, then the generating function $F$ of $a(k)$ is well-defined (as a conditionally convergent series) when $|z| = 1$, $z \ne 1$, and $(z - 1) F(z)$ is the generating function of $\Delta a(k)$. Furthermore, in this case $\Delta a(k)$ is summable (it has eventually constant sign as $k \to \infty$ and as $k \to -\infty$) and it sums up to zero. Thus, $(z - 1) F(z)$ extends to a continuous function on the unit circle $|z| = 1$, and it takes value $0$ at $z = 1$. This is equivalent to say that condition~\ref{thm:main:c5} is satisfied.
\end{itemize}
\end{remark}

\begin{remark}
\label{rem:ex}
If the generating function $F(z)$ of a given sequence $a(k)$ is given by an explicit formula when $|z| = 1$, $z \ne 1$, and this formula defines a holomorphic function in the upper complex half-plane $\im z > 0$ (denoted again by $F(z)$), then it is usually not very difficult to verify whether the conditions of Theorem~\ref{thm:main}\ref{thm:main:c} are satisfied. Indeed: it suffices to check that $F$ is zero-free in the upper complex half-plane, and then study the continuous version of the argument of $F(z)$, which we denote by $\pi \Phi(z)$. One needs to show that $\Phi$ is the sum of two terms: the Poisson integral of an appropriate function $\ph$ (which is necessarily equal to the boundary values of $\Phi$), and $b^+ \im z + b^- \im z^{-1}$. Here by saying that $\ph$ is an appropriate function we mean that conditions~\ref{thm:main:c1} through~\ref{thm:main:c4} in Theorem~\ref{thm:main}\ref{thm:main:c} hold true.

Later in this section we apply the above procedure to two classes of sequences; see Examples~\ref{ex:rw} and~\ref{ex:rrw}.
\end{remark}

\begin{remark}
\label{rem:alt}
Observe that
\formula{
 \frac{1}{s - z} - \frac{s}{s^2 + 1} & = \frac{1}{s^2 + 1} \, \frac{1 + s z}{s - z} \, .
}
Let us agree that $(1 + s z) / (s - z) = z$ when $s = \infty$, so that $(1 + s z) / (s - z)$ becomes a continuous function of $s$ on $\R \cup \{\infty\}$, the one-point compactification of $\R$. Furthermore, let us denote
\formula{
 \sigma(ds) & = b^+ \delta_\infty(ds) - b^- \delta_0(ds) + \frac{\ph(s)}{s^2 + 1} \, ds .
}
Then formula~\eqref{eq:main} can be written as
\formula[eq:main:alt]{
 F(z) & = \exp\biggl(c + \int_{\R \cup \{\infty\}} \frac{1 + s z}{s - z} \, \sigma(ds)\biggr) .
}
Here $\sigma$ is a finite signed measure on $\R \cup \{\infty\}$, with an appropriate density function on $\R \setminus \{0\}$.
\end{remark}

\begin{remark}
\label{rem:symmetry}
Note that a sequence $a(k)$ is bell-shaped if and only if its mirror image $a(-k)$ is bell-shaped. If the generating function of $a(k)$ is given by the representation formula~\eqref{eq:main}, then the generating function of $a(-k)$ is equal to $F(1/z)$. Furthermore, $F(1/z)$ is again given by the right-hand side of~\eqref{eq:main}, with parameters $b^+, b^-, c, \ph(s)$ replaced by $b^-, b^+, c, -\ph(1/s)$.

One-sided bell-shaped sequences correspond to $b^- = 0$ and $\ph(s) \ge 0$ in Theorem~\ref{thm:main}; see Theorem~1.1 in~\cite{kw}. Combining this with the above observation, we find that bell-shaped sequences $a(k)$ such that $a(k) = 0$ for $k > 0$ correspond to $b^+ = 0$ and $\ph(s) \le 0$ in Theorem~\ref{thm:main}.
\end{remark}

Our main result, Theorem~\ref{thm:main}, clearly resembles the corresponding statement for one-sided bell-shaped sequences (Theorem~1.1 in~\cite{kw}), as well as the analogous result for bell-shaped functions (Theorems~1.1 and~1.3 in~\cite{ks}). Proofs of all these theorems have a similar structure, developed mostly in~\cite{kwasnicki,ks}. We stress, however, that the proof of Theorem~\ref{thm:main} given below contains two essentially new elements, already mentioned in the introduction. First, we need a replacement for Post's inversion formula for the Laplace transform: its discrete variant employed in~\cite{kw} does not apply to two-sided sequences. For this reason we prove in Proposition~\ref{prop:post} an inversion formula that involves iterated differences of the Fourier--Laplace transform of the generating function. Next, unlike in the case of one-sided sequences, holomorphic functions defined by the right-hand side of~\eqref{eq:main} are no longer exponentials of Pick functions; instead, they are exponentials of certain differences of Pick functions. The pointwise limit of Pick functions is necessarily a Pick function, but this is no longer true for differences of Pick functions. We were unable to find a compactness result that would suit our needs in available literature. The property that is needed in the proof of Theorem~\ref{thm:main} is given in Lemma~\ref{lem:compact}. Its proof turned out to be surprisingly long and technical.

Theorem~\ref{thm:main} shows that, the convolution of bell-shaped sequences, whenever defined, corresponds to addition of the parameters $b^+, b^-, c, \ph$ in the representation formula~\eqref{eq:main} of the corresponding generating functions. Using this fact it is easy to construct bell-shaped sequences whose convolution is not bell-shaped (because the sum of corresponding functions $\ph$ is not increasing-after-rounding on $(0, \infty)$). We have, however, the following interesting result.

\begin{corollary}[Wiener--Hopf factorisation of bell-shaped sequences]
\label{cor:wh}
Suppose that $b(k)$ is a one-sided bell-shaped sequences, and $c(k)$ is the mirror image of a one-sided bell-shaped sequence (that is, $c(k)$ is bell-shaped and $c(k) = 0$ for $k > 0$). Suppose furthermore that the convolution $a(k) = b * c(k)$ is well-defined, and that $a(k)$ converges to zero as $k \to \pm \infty$. Then $a(k)$ is bell-shaped. Conversely, every bell-shaped sequence can be factorised in the way described above.
\end{corollary}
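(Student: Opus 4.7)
My plan is to use Theorem~\ref{thm:main}\ref{thm:main:c} and the fact that, in the representation~\eqref{eq:main}, multiplication of generating functions corresponds to addition of the quadruples $(b^+, b^-, c, \ph)$, so the factorisation reduces to splitting and re-combining these parameters appropriately.

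\emph{Forward direction.} Apply Theorem~\ref{thm:main}\ref{thm:main:c} to $b$ and $c$ individually. By Remark~\ref{rem:symmetry}, $b$ is represented by a quadruple $(b^+_b, 0, c_b, \ph_b)$ with $\ph_b \ge 0$, and $c$ by $(0, b^-_c, c_c, \ph_c)$ with $\ph_c \le 0$. Combined with the sign constraint~\ref{thm:main:c3}, this forces $\ph_b \equiv 0$ on $(0, 1)$ and $\ph_c \equiv 0$ on $(1, \infty)$, so the supports of $\ph_b$ and $\ph_c$ on $(0, \infty)$ are (up to $s = 1$) disjoint. Since $F_{b * c} = F_b F_c$ on the unit circle and both factors extend holomorphically to the upper half-plane via~\eqref{eq:main}, the exponents add, and $a = b * c$ is represented by $(b^+_b, b^-_c, c_b + c_c, \ph_b + \ph_c)$. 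Conditions~\ref{thm:main:c1}, \ref{thm:main:c3}, \ref{thm:main:c4} for the sum are then immediate. For~\ref{thm:main:c2}, stepwise-increasing majorants $\tilde\ph_b$ and $\tilde\ph_c$ of $\ph_b$ and $\ph_c$ on $(0, \infty)$ can be replaced by $\max(\tilde\ph_b, 0)$ and $\min(\tilde\ph_c, 0)$ (still sandwiching $\ph_b$ and $\ph_c$ between consecutive integers, because of the support information), and then spliced at $s = 1$ into a single stepwise-increasing majorant of $\ph_b + \ph_c$. Condition~\ref{thm:main:c5} follows from the second sufficient condition in Remark~\ref{rem:c5}: writing $a(k) = \sum_{m \ge 0} b(k + m) c(-m)$, the eventual monotonicity of $b$ at $+\infty$ and nonnegativity of $c$ imply that $a$ is eventually monotonic at $+\infty$, and symmetrically at $-\infty$.

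\emph{Converse direction.} Given a bell-shaped $a$ with parameters $(b^+, b^-, c, \ph)$, split
\begin{align*}
 \ph_b & := \max(\ph, 0) \, \ind_{(-\infty, 0)} + \ph \, \ind_{(1, \infty)} , \\
 \ph_c & := \min(\ph, 0) \, \ind_{(-\infty, 0)} + \ph \, \ind_{(0, 1)} ,
\end{align*}
so that $\ph = \ph_b + \ph_c$, $\ph_b \ge 0$, and $\ph_c \le 0$. Pair these with quadruples $(b^+, 0, c_b, \ph_b)$ and $(0, b^-, c_c, \ph_c)$ for any decomposition $c = c_b + c_c$. The key verification is~\ref{thm:main:c2}: given a stepwise-increasing $\tilde\ph$ with $\tilde\ph \le \ph \le \tilde\ph + 1$ on $(0, \infty)$, the functions $\max(\tilde\ph, 0)$ and $\min(\tilde\ph, 0)$ are stepwise increasing and sandwich $\ph_b$ and $\ph_c$ respectively, using the sign conditions from~\ref{thm:main:c3} and the bound $\tilde\ph \ge -1$ on $(1, \infty)$ (forced by $\ph \ge 0$ there). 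Conditions~\ref{thm:main:c1}, \ref{thm:main:c3}, \ref{thm:main:c4} are routine. Each quadruple then yields a bell-shaped sequence by Theorem~\ref{thm:main}, one-sided and mirror one-sided by Remark~\ref{rem:symmetry}, and the multiplicativity of generating functions gives $F_a = F_b F_c$, whence $a = b * c$.

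The main obstacle I expect is verifying condition~\ref{thm:main:c5} for the pieces $b$ and $c$ in the converse direction, since deducing it from their eventual monotonicity presupposes that they are bell-shaped, which is precisely the conclusion sought. I would resolve this circularity by appealing to Theorem~1.1 of~\cite{kw} (whose one-sided characterisation handles the boundary behaviour internally) to obtain $b$, and then reading off condition~\ref{thm:main:c5} for $c$ from the identity $F_c = F_a / F_b$ on the unit circle, using the corresponding property of $a$ together with the non-vanishing and boundary behaviour of $F_b$ near $z = 1$.
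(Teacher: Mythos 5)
Your proof follows the paper's route very closely. The forward direction applies Remark~\ref{rem:symmetry} to the factors, multiplies the generating functions, observes that the two functions $\ph_b$ and $\ph_c$ have essentially disjoint support on $(0,\infty)$ so that conditions~\ref{thm:main:c1}--\ref{thm:main:c4} are immediate (your splicing of stepwise majorants for~\ref{thm:main:c2} is a correct elaboration of what the paper leaves implicit), and invokes Remark~\ref{rem:c5} for condition~\ref{thm:main:c5}. The converse direction uses exactly the paper's decomposition $\ph = \max(\ph,0) + \min(\ph,0)$; once the sign constraints from~\ref{thm:main:c3} are imposed, your $\ph_b$ and $\ph_c$ coincide with these two functions.

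On the obstacle you flag at the end: the concern about~\ref{thm:main:c5} for the pieces is legitimate, and the paper's proof is as terse about it as yours. Your workaround, however, is more roundabout than necessary and its first step (invoking the one-sided theorem of~\cite{kw} to produce $b$) does not obviously avoid the same boundary verification for $\ph_b$. A direct estimate suffices. Since $\ph_c = \min(\ph,0) \le 0$ vanishes on $(1,\infty)$, and
$\re\bigl(\tfrac{1}{s - e^{it}} - \tfrac{s}{s^2+1}\bigr) = \tfrac{(s^2-1)\cos t}{(s^2+1)(s^2 - 2s\cos t + 1)}$
is nonpositive for $|s|<1$ and $\cos t > 0$, every contribution to $\log|F_c(e^{it})|$ from $(-1,1)$ is nonnegative; the contribution from $(-\infty,-1)$ converges to a finite limit by~\ref{thm:main:c4} and dominated convergence, and the $b^-$ and $c$ terms are bounded. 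Hence $|F_c(e^{it})|$ stays bounded away from zero as $t \to 0$, so $(e^{it}-1)F_b(e^{it}) = (e^{it}-1)F_a(e^{it})/F_c(e^{it}) \to 0$. The symmetric argument (using $\ph_b \ge 0$ and $\ph_b = 0$ on $(0,1)$) bounds $|F_b(e^{it})|$ below and yields~\ref{thm:main:c5} for $F_c$, with no appeal to the one-sided characterisation at all.
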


\begin{proof}
The direct part is a simple consequence of Theorem~\ref{thm:main} and its one-sided version given in~\cite{kw}. Indeed: suppose that $\ph_1$ and $\ph_2$ correspond to the functions $\ph$ in the representation~\eqref{eq:main} of the generating functions $G$ and $H$ of $b(k)$ and $c(k)$, respectively. The generating function of $a(k)$ is easily shown to be equal to $F(z) = G(z) H(z)$, and therefore $F$ has representation~\eqref{eq:main} with $\ph = \ph_1 + \ph_2$. However, by Remark~\ref{rem:symmetry}, $\ph_1(s) = 0$ for $s \in [0, 1]$ and $\ph_2(s) = 0$ for $s \in [1, \infty)$. It is thus easy to see that $\ph$ has properties~\ref{thm:main:c1} through~\ref{thm:main:c4}. Furthermore, since $a(k)$ converges eventually monotonically to zero as $k \to \infty$ and as $k \to -\infty$, condition~\ref{thm:main:c5} is satisfied by Remark~\ref{rem:c5}. Consequently, $F$ is the generating function of a bell-shaped sequence $a(k) = b * c(k)$.

The converse part of the corollary has an even simpler proof: it suffices to define $\ph_1(s) = \max\{\ph(s), 0\}$ and $\ph_2(s) = \min\{\ph(s), 0\}$, and observe that $\ph(s) = \ph_1(s) + \ph_2(s)$, the parameters $b^+, 0, c, \ph_1$ correspond to a one-sided bell-shaped sequence $b(k)$ (by Remark~\ref{rem:symmetry}), while the parameters $0, b^-, 0, \ph_2$ correspond to a sequence $c(k)$ which is the mirror image of a one-sided bell-shaped sequence (again by Remark~\ref{rem:symmetry}).
\end{proof}

By a very similar argument, we are able to describe when convolution powers or convolution roots of bell-shaped sequences are bell-shaped. We leave the details of the proof to the interested reader, and we refer to Corollaries~1.4 and~1.5 in~\cite{ks} for analogous results for bell-shaped functions.

\begin{corollary}[infinitely divisible bell-shaped sequences]
Suppose that a probability mass function $a(k)$ is a two-sided bell-shaped sequence. Then $a(k)$ is the probability mass function of an infinitely divisible distribution if and only if the corresponding function $\ph$ defined in Theorem~\ref{thm:main} is equal to zero on $(-\infty, 0)$. In this case the convolution roots of $a(k)$ are bell-shaped, too.
\end{corollary}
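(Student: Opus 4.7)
The plan follows the strategy of analogous results for bell-shaped functions in~\cite{ks}. For the $(\Leftarrow)$ direction together with the final statement about convolution roots, I would verify that, assuming $\ph \equiv 0$ on $(-\infty, 0)$, the scaled parameters $(b^+/n, b^-/n, c/n, \ph/n)$ satisfy all conditions of Theorem~\ref{thm:main}\ref{thm:main:c} for every positive integer $n$. Condition~\ref{thm:main:c1} is immediate since $\ph/n$ still vanishes on $(-\infty, 0)$; conditions~\ref{thm:main:c3} and~\ref{thm:main:c4} are clearly preserved under scaling; condition~\ref{thm:main:c5} follows from continuity of $F$ at $z = 1$ (which holds because $a$ is a summable pmf), giving $F^{1/n}(e^{it}) \to 1$ as $t \to 0$. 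The only substantive verification is~\ref{thm:main:c2}: given a stepwise increasing $\tilde\ph$ with $\tilde\ph \le \ph \le \tilde\ph + 1$ on $(0, \infty)$, one checks that $\tilde\psi := \lfloor \tilde\ph/n \rfloor$ is stepwise increasing and satisfies $\tilde\psi \le \ph/n \le \tilde\psi + 1$, with the upper bound using that $\tilde\ph$ is integer-valued, so $\tilde\ph + 1 \le n(\tilde\psi + 1)$. By Theorem~\ref{thm:main}, the function $F^{1/n} := \exp((1/n)\log F)$, defined on the upper half-plane via the branch with $F^{1/n}(1) = 1$, is the generating function of a bell-shaped sequence $a_n$, which is a pmf and satisfies $a_n^{*n} = a$. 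Hence $a$ is infinitely divisible with bell-shaped convolution roots.

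For the $(\Rightarrow)$ direction, assume $a$ is bell-shaped and infinitely divisible. For each $n \ge 1$, let $a_n$ be the pmf with $a_n^{*n} = a$, and let $A_n$ denote its generating function. Since $F$ is zero-free in the upper half-plane, the function $\exp((1/n)\log F)$ is well-defined and holomorphic there, and its $n$-th power equals $F$; by the normalization $A_n(1) = 1 = F(1)^{1/n}$ together with analytic continuation, $A_n$ coincides with $\exp((1/n)\log F)$ throughout the upper half-plane. Hence $A_n$ admits the representation~\eqref{eq:main} with parameters $(b^+/n, b^-/n, c/n, \ph/n)$. The goal is to deduce that $\ph/n$ satisfies~\ref{thm:main:c1}---i.e., is integer-valued on $(-\infty, 0)$---for every $n$; this will force $\ph(s) \in \bigcap_{n \ge 1} n\Z = \{0\}$ for each $s < 0$, so $\ph \equiv 0$ on $(-\infty, 0)$.

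The main obstacle is establishing that $a_n$ is itself bell-shaped, so that the uniqueness of parameters in Theorem~\ref{thm:main} applied to $a_n$ yields the required stepwise-decreasing property of $\ph/n$. One approach, modelled on the \ref{thm:main:c}~$\Rightarrow$~\ref{thm:main:a} direction of Theorem~\ref{thm:main}, is to prove a lemma asserting that whenever a function $F$ of the form~\eqref{eq:main} with parameters satisfying~\ref{thm:main:c2}--\ref{thm:main:c5} is the generating function of a nonnegative summable sequence on $\Z$, its parameter $\ph$ must also satisfy~\ref{thm:main:c1}; intuitively, this condition encodes the reality of the boundary values of $F$ on $(-\infty, 0) \subset \R$, which is precisely what is required for nonnegativity of the Fourier coefficients at negative indices. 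An alternative, more structural route uses the Wiener--Hopf factorization from Corollary~\ref{cor:wh}: write $a = b * c$ with $b$ one-sided and $c$ reversed one-sided bell-shaped; infinite divisibility of $a$, combined with the essential uniqueness of the decomposition of an infinitely divisible distribution on $\Z$ into parts supported on the nonnegative and nonpositive integers, would force $b$ and $c$ to themselves be infinitely divisible, and the one-sided analogue of the corollary (derivable by the same method in the setting of~\cite{kw}) then yields $\ph_b \equiv 0$ and $\ph_c \equiv 0$ on $(-\infty, 0)$, so that $\ph = \ph_b + \ph_c$ vanishes there as well.
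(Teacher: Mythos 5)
The easy half (the implication from $\ph \equiv 0$ on $(-\infty,0)$ to infinite divisibility, together with the bell-shapedness of the roots) is handled correctly: you verify that the scaled parameters $(b^+/n, b^-/n, c/n, \ph/n)$ satisfy all of~\ref{thm:main:c1}--\ref{thm:main:c5}, with the only non-trivial check being~\ref{thm:main:c2} via $\tilde\psi = \lfloor\tilde\ph/n\rfloor$, and you invoke Theorem~\ref{thm:main} to produce bell-shaped roots $a_n$. This matches what the paper points to by ``a very similar argument'' and the reference to Corollaries~1.4--1.5 in~\cite{ks}. (A small point you gloss over: you should verify that the bell-shaped sequence $a_n$ produced by Theorem~\ref{thm:main} is summable with sum $1$; this follows because $a_n$ is nonnegative and eventually monotone in each direction, while $F^{1/n}(e^{it}) \to 1$ as $t \to 0$.)

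The hard half — that infinite divisibility forces $\ph \equiv 0$ on $(-\infty,0)$ — contains a genuine gap, which you yourself flag as ``the main obstacle,'' and neither of your proposed fixes closes it. Concretely, the uniqueness clause in Theorem~\ref{thm:main} only associates a unique quadruple $(b^+,b^-,c,\ph)$ to the generating function of a \emph{bell-shaped} sequence; to apply it to $A_n = F^{1/n}$ you need to know that the root $a_n$ is itself bell-shaped, which is precisely what is not given. Your Approach~1 posits a lemma asserting that conditions~\ref{thm:main:c2}--\ref{thm:main:c5} plus nonnegativity and summability force~\ref{thm:main:c1}. The intuition you offer — ``reality of the boundary values of $F$ on $(-\infty,0)$'' — does not go through as stated: a two-sided generating function is only determined on the unit circle, so the only point of $(-\infty,0)$ at which reality of $A_n$ is available is $z = -1$, and the constraint there merely gives $\ph(-1)/n \in \Z$, not $\ph/n$ integer-valued on all of $(-\infty,0)$; moreover $\ph$ may jump at $-1$, in which case even this weaker conclusion requires care. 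Your Approach~2 invokes an ``essential uniqueness of the decomposition of an infinitely divisible distribution on $\Z$ into parts supported on the nonnegative and nonpositive integers,'' but the Wiener--Hopf factorisation of Corollary~\ref{cor:wh} is not canonical: the proof there makes one particular choice $\ph_1 = \max\{\ph,0\}$, $\ph_2 = \min\{\ph,0\}$, and different splittings yield different factors $b$, $c$. In general a Wiener--Hopf factor of an infinitely divisible pmf need not itself be infinitely divisible, so this route does not force $\ph_b, \ph_c$ to vanish on $(-\infty,0)$. You would need either to actually prove the lemma of Approach~1, or to replace Approach~2 with an argument that pins down the \emph{canonical} (Lévy-measure) decomposition and shows that, for a bell-shaped $a$, this canonical decomposition coincides with the one given by $\ph_1, \ph_2$ above.
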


\begin{corollary}[convolution powers of bell-shaped sequences]
Suppose that $a(k)$ is a two-sided bell-shaped sequence, and the convolution powers of $a(k)$ are well-defined. Then the following two conditions are equivalent:
\begin{enumerate}[label={\textup{(\alph*)}},leftmargin=2.5em]
\item every convolution power of $a(k)$ is bell-shaped;
\item the corresponding function $\ph$ defined in Theorem~\ref{thm:main} is nondecreasing on $(0, \infty)$.
\end{enumerate}
\end{corollary}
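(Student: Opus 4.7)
The approach mirrors the proof of Corollary~\ref{cor:wh}: since generating functions multiply under convolution and the representation~\eqref{eq:main} is of exponential form, raising $F$ to the $n$-th power simply scales all four parameters by $n$. Thus $a^{*n}$ corresponds to the quadruple $(nb^+, nb^-, nc, n\ph)$ in Theorem~\ref{thm:main}\ref{thm:main:c}, and the problem reduces to understanding which properties of $\ph$ from conditions~\ref{thm:main:c1}--\ref{thm:main:c5} are preserved under multiplication by an arbitrary positive integer.

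For the implication (b) $\Rightarrow$ (a), suppose $\ph$ is nondecreasing on $(0, \infty)$. Then so is $n\ph$, which in particular is increasing-after-rounding on $(0, \infty)$ (take $\tilde\ph = \lfloor n\ph \rfloor$). Conditions~\ref{thm:main:c1}, \ref{thm:main:c3}, and~\ref{thm:main:c4} are homogeneous in $\ph$ and hence immediately preserved under scaling by $n$, and condition~\ref{thm:main:c5} for $F^n$ follows from the assumption that $a^{*n}$ is well-defined, using the fact that bell-shaped sequences (and hence also their convolution powers, when they exist) converge eventually monotonically to zero at $\pm\infty$, so that Remark~\ref{rem:c5} applies. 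Theorem~\ref{thm:main} then implies that $a^{*n}$ is bell-shaped.

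For the converse (a) $\Rightarrow$ (b), suppose $a^{*n}$ is bell-shaped for every positive integer $n$. Then by Theorem~\ref{thm:main} the function $n\ph$ is increasing-after-rounding on $(0, \infty)$, so there exist stepwise increasing functions $\psi_n$ satisfying $\psi_n \le n\ph \le \psi_n + 1$ on $(0, \infty)$. For any $0 < s_1 < s_2$, monotonicity of $\psi_n$ yields
\[
 n\ph(s_1) \le \psi_n(s_1) + 1 \le \psi_n(s_2) + 1 \le n\ph(s_2) + 1 ,
\]
so that $\ph(s_1) - \ph(s_2) \le 1/n$. Letting $n \to \infty$ gives $\ph(s_1) \le \ph(s_2)$, and hence $\ph$ is nondecreasing on $(0, \infty)$.

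The main obstacle I anticipate is the technical verification of condition~\ref{thm:main:c5} for the powers $F^n$ in the direction (b) $\Rightarrow$ (a), since $F(e^{it})$ may in principle blow up as $t \to 0$ and so $(e^{it}-1) F(e^{it})^n$ is not automatically controlled. This is why the well-definedness hypothesis on convolution powers is needed: it guarantees enough summability (via eventual monotonicity of $a$) to invoke Remark~\ref{rem:c5} for each $a^{*n}$. Everything else in the argument is a direct combinatorial consequence of the representation~\eqref{eq:main} together with the elementary observation that nondecreasing functions are preserved under positive scaling.
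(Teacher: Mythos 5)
Your reduction via the scaling $(b^+, b^-, c, \ph) \mapsto (n b^+, n b^-, n c, n\ph)$ is the right starting point, and your argument for (a) $\Rightarrow$ (b) is correct and clean: since for every $n$ there is a stepwise increasing $\psi_n$ with $\psi_n \le n\ph \le \psi_n + 1$, any pair $0 < s_1 < s_2$ satisfies $\ph(s_1) - \ph(s_2) \le 1/n$, and letting $n \to \infty$ gives monotonicity. (It is worth saying explicitly that one also needs the uniqueness assertion at the end of Theorem~\ref{thm:main} to identify the $\ph$-function of $a^{*n}$ with $n\ph$, but this is a small point.)

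The implication (b) $\Rightarrow$ (a) has a genuine gap, at precisely the step you flag yourself: the verification of condition~\ref{thm:main:c5} for $F^n$. You deduce it from Remark~\ref{rem:c5} applied to $a^{*n}$, which requires knowing that $a^{*n}$ converges \emph{eventually monotonically} to zero as $k \to \pm\infty$; but that is part of the conclusion ``$a^{*n}$ is bell-shaped'' you are trying to establish, not a consequence of well-definedness of the convolution power. Well-definedness only says that the defining series converges at each $k$; it does not by itself give $a^{*n}(k) \to 0$, much less monotone decay. A more direct route would be to write $(e^{it}-1)F(e^{it})^n = F(e^{it})^{n-1}\cdot(e^{it}-1)F(e^{it})$ and use condition~\ref{thm:main:c5} for $a$ on the last factor, so that it suffices to show $F(e^{it})$ stays bounded as $t \to 0$; but proving this boundedness from the hypothesis that all convolution powers converge is exactly the content that is missing. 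Indeed, the proof of Lemma~\ref{lem:lim} gives the lower bound $|F(e^{it})| \ge C\,|e^{it}-1|^{-p-q}$ whenever $\ph \le -p$ on $(0,1)$ and $\ph \ge q$ on $(1,\infty)$, so condition~\ref{thm:main:c5} for $F^n$ fails as soon as $n(p+q) \ge 1$; one must therefore first rule out $p + q > 0$ using the well-definedness hypothesis, and your write-up asserts this control but does not derive it.
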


\begin{remark}
\label{rem:egnbc}
Bell-shaped sequences are related to \emph{generalised negative binomial convolutions}. The probability mass function of a negative binomial distribution is given by~\eqref{eq:nb} below, and generalised negative binomial convolutions are probability distributions that arise as weak limits of finite convolutions of negative binomial distributions. For a detailed discussion, we refer the reader to Chapter~8 of~\cite{bondesson:book}.

It is easy to see that a one-sided sequence $a(k)$ is the probability mass function of a generalised negative binomial convolution if and only if it has the representation~\eqref{eq:main} given in Theorem~\ref{thm:main}, with $b^+ \ge 0$, $b^- = 0$, appropriate $c \in \R$, $\ph(s) = 0$ for $s \in (-\infty, 1]$, and $\ph$ increasing on $[1, \infty)$. This follows by a routine calculation from equation~(8.1.2) in~\cite{bondesson:book}, with $\ph(s) = V([1/s, 1))$ for $s > 1$; we omit the details. Thus, probability mass functions of generalised negative binomial convolutions are bell-shaped.

If we assume that $\ph(s) = 0$ for $s \in (-\infty, 0)$ and $\ph$ is increasing on $(0, \infty)$ in Theorem~\ref{thm:main}, then the corresponding sequence $a(k)$ is the convolution of a generalised negative binomial convolution with the mirror image of a generalised negative binomial convolution. In analogy to generalised gamma convolutions, this class of sequences may be called \emph{extended generalised negative binomial convolutions}. Of course, this is again a subclass of bell-shaped sequences.

An analogous result for extended generalised gamma convolutions and bell-shaped functions is given in Corollary~1.5 in~\cite{ks}.
\end{remark}

\subsection{Examples}

The following one-sided bell-shaped sequences (that is, sequences $a(k)$ equal to $0$ for $k < 0$) have already been discussed in~\cite{kw}:
\begin{itemize}[leftmargin=2.5em]
\item the delta sequence:
\formula{
 a(k) & = \begin{cases} 1 & \text{for $k = 0$;} \\ 0 & \text{otherwise} \end{cases}
}
(this corresponds to $\ph(s) = 0$ and $b^+ = b^- = 0$);
\item geometric sequences:
\formula{
 a(k) & = q^k ,
}
where $q \in (0, 1)$ (here $\ph(s) = \ind_{(1 / q, \infty)}(s)$ and $b^+ = b^- = 0$);
\item probability mass functions of Poisson random variables:
\formula{
 a(k) & = e^{-\lambda} \, \frac{\lambda^k}{k!} \, ,
}
where $\lambda > 0$ (here $\ph(s) = 0$, $b^+ = \lambda$ and $b^- = 0$);
\item probability mass functions of Bernoulli and, more generally, binomial distributions:
\formula{
 a(k) & = \binom{n}{k} p^k (1 - p)^{n - k} ,
}
where $n = 1, 2, \ldots$ and $p \in (0, 1)$ (here $\ph(s) = n \ind_{(-\infty, 1 - 1/p)}(s)$ and $b^+ = b^- = 0$);
\item one-sided summable Pólya frequency sequences; that is, convolutions of a finite or countable infinite number of the above sequences (here $b^+, b^- \ge 0$, $\ph$ is stepwise decreasing on $(-\infty, 0]$, zero on $[0, 1]$, and stepwise increasing on $[1, \infty)$; cf.\@ Lemma~\ref{lem:pf});
\item completely monotone sequences (here $b^+ = b^- = 0$, $\ph(s) = 0$ on $(-\infty, 1]$ and $\ph(s) \in [0, 1]$ on $[1, \infty)$; cf.\@ Lemma~\ref{lem:amcm});
\item probability mass functions of negative binomial distributions:
\formula[eq:nb]{
 a(k) & = \binom{\lambda + k - 1}{k} p^\lambda (1 - p)^k ,
}
where $\lambda \in (0, \infty)$ and $p \in (0, 1)$ (here $\ph(s) = \lambda \ind_{(1 / (1 - p), \infty)}(s)$ and $b^+ = b^- = 0$);
\item probability mass functions of discrete stable distributions, with generating function
\formula{
 \sum_{k = 0}^\infty a(k) z^k & = \exp(-\lambda (1 - z)^\nu) ,
}
where $\lambda \in (0, \infty)$ and $\nu \in (0, 1]$ (here $\ph(s) = \tfrac{\lambda}{\pi} \sin(\nu \pi) (s - 1)^\nu \ind_{(1, \infty)}(s)$ and $b^+ = b^- = 0$ if $\nu \in (0, 1)$; for $\nu = 1$, the discrete stable distribution is simply the Poisson distribution), see~\cite{sv} and Section~6 in~\cite{kw}.
\end{itemize}
In view of Remark~\ref{rem:egnbc}, the probability mass functions of generalised Waring distributions (Example~8.2.3 in~\cite{bondesson:book}) and logarithmic series distributions (Example~8.2.4 in~\cite{bondesson:book}) are further examples of one-sided bell-shaped functions.

The class of one-sided bell-shaped sequences is not closed under convolutions. However, the main result of~\cite{kw} describes one-sided bell-shaped sequences as convolutions of completely monotone sequences which converge to zero and one-sided summable Pólya frequency sequences. This is in perfect analogy with our Theorem~\ref{thm:main}.

By Corollary~\ref{cor:wh}, any convolution of a one-sided bell-shaped sequence with the mirror image of a one-sided bell-shaped sequence, whenever well-defined and convergent to zero at $\pm\infty$, is again bell-shaped. Thus, for example, the following two-sided sequences are bell-shaped:
\begin{itemize}
\item two-sided geometric sequences, which are convolutions of one-sided geometric sequences:
\formula{
 a(k) & = \begin{cases} q_+^k & \text{if $k \ge 0$,} \\ q_-^{-k} & \text{if $k < 0$,} \end{cases}
}
where $q_+, q_- \in (0, 1)$ (here $\ph(s) = \ind_{(1/q_+, \infty)}(s) - \ind_{(0, q_-)}(s)$ and $b^+ = b^- = 0$);
\item probability mass functions of Skellam's distributions, that is, convolutions of one-sided Poisson distributions:
\formula{
 a(k) & = e^{-\lambda_+ - \lambda_-} \lambda_+^{k / 2} \lambda_-^{-k / 2} I_k(2 \sqrt{\lambda_+ \lambda_-}) ,
}
where $\lambda_+, \lambda_- \in (0, \infty)$ and $I_k$ is the modified Bessel function of the first kind (here $\ph(s) = 0$ and $b^+ = \lambda_+$, $b^- = \lambda_-$);
\item probability mass functions of two-sided discrete stable distributions, that is, convolutions of one-sided discrete stable distributions corresponding to the same parameter $\nu \in (0, 1]$ (here $\ph(s) = \tfrac{1}{\pi} \sin(\nu \pi) (\lambda_+ (s - 1)^\nu \ind_{(1, \infty)}(s) - \lambda_- (s^{-1} - 1)^\nu \ind_{(0, 1)}(s))$ and $b^+ = b^- = 0$ if $\nu \in (0, 1)$; for $\nu = 1$, this coincides with Skellam's distribution).
\end{itemize}

It is perhaps surprising that bell-shaped functions give rise to bell-shaped sequences if sampled at equal time intervals. More precisely, we have the following result.

\begin{theorem}
\label{thm:sample}
If $f$ is a bell-shaped function, then the sequence $(f(k) : k \in \Z)$ is bell-shaped.
\end{theorem}

Theorem~\ref{thm:sample} is proved in Section~\ref{sec:sample}.

Clearly, not every bell-shaped sequence arises in this way. For example, probability mass functions of binomial distributions are bell-shaped and have finitely many nonzero terms. On the other hand, Hirschman proved in~\cite{hirschman} that there are no compactly supported bell-shaped functions, so if $f$ is a bell-shaped function, then the sequence $f(k)$ has infinitely many nonzero terms.

The anonymous referee asked natural and interesting questions regarding the characterisation of bell-shaped sequences that arise by sampling bell-shaped functions, as in Theorem~\ref{thm:sample}, and about additional assumptions on $f$ under which the converse of Theorem~\ref{thm:sample} holds. Analogous results for completely monotone sequences and functions are given in~\cite{aj}, but the case of bell-shaped sequences and functions appears to be more subtle. For instance, probability mass functions of Poisson distributions are not of the form $f(k)$ for a one-sided bell-shaped function $f$, since one can prove that $f$ decays at infinity at most exponentially fast. In particular, the function $(\Gamma(x + 1))^{-1} \lambda^x \ind_{(-1, \infty)}(x)$ is not bell-shaped.

Theorem~\ref{thm:sample} leads to another series of examples of one-sided and two-sided bell-shaped sequences:
\begin{itemize}[leftmargin=2.5em]
\item discretised density functions of the normal distribution,
\formula{
 a(k) & = \exp(-\alpha k^2 - \beta k - \gamma) ,
}
where $\alpha > 0$ and $\beta, \gamma \in \R$;
\item discretised density functions of (one-sided) inverse Gaussian distributions
\formula{
 a(k) & = \begin{cases} e^{-\alpha / k} k^{-p} & \text{if $k > 0$,} \\ 0 & \text{otherwise,} \end{cases}
}
where $p > 0$ and $\alpha > 0$;
\item negative powers of quadratic expressions,
\formula{
 a(k) & = \frac{1}{(\alpha k^2 + \beta k + \gamma)^p} \, ,
}
where $p > 0$, $\alpha > 0$, $\beta, \gamma \in \R$, and $\beta^2 > 4 \alpha \gamma$;
\item sequences of the form
\formula{
 a(k) & = \frac{1}{((k - \alpha)^2 + \beta^2) ((k - \alpha)^2 + \gamma^2)} \, ,
}
where $\alpha \in \R$ and $\beta, \gamma > 0$.
\end{itemize}
We refer to Sections~6.4 and~6.5 in~\cite{kwasnicki} for a discussion of the corresponding bell-shaped functions. Interestingly, there seems to be no other simple proof that these sequences are indeed bell-shaped. A direct verification of the definition seems hopeless, and the characterisation provided in Theorem~\ref{thm:main} may be difficult to apply. In particular, there seems to be no simple formula for the functions $\ph$ (and for the generating functions $F$) corresponding to the above examples.

The following beautiful result for one-sided sequences is due to Bondesson~\cite{bondesson}. It is a discrete analogue of the similar theorem about hitting times of one-dimensional diffusions, given in~\cite{js}.

\begin{example}[hitting time of random walks]
Consider a \emph{random walk} $X_n$ on $\N = \{0, 1, 2, \ldots\}$: a discrete-time Markov chain with steps $\pm 1$ as long as $X_n > 0$, and with an absorbing state $0$. Suppose that $X_0 = x$, and let $N = \min\{n \in \N : X_n = 0\}$ denote the time to absorption. Then the probability mass function of $\tfrac{1}{2} (N - x)$ is a one-sided bell-shaped sequence. This follows immediately from Theorem~1 in~\cite{bondesson} and Theorem~1.1 in~\cite{kw}; see also Theorem~3 therein.
\end{example}

Motivated by Bondesson's result, below we give two examples of two-sided bell-shaped sequences, which illustrate a concept that is significantly extended in the follow-up paper~\cite{wszola} by the second named author.

\begin{figure}
\includegraphics[width=0.8\textwidth]{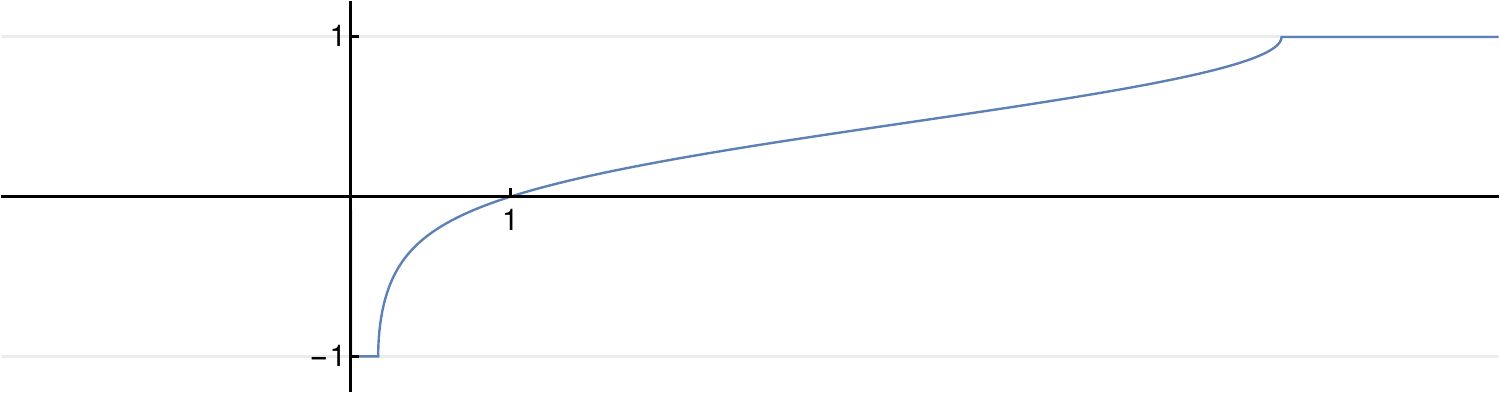} \\
(a) \\[0.5em]
\includegraphics[width=0.8\textwidth]{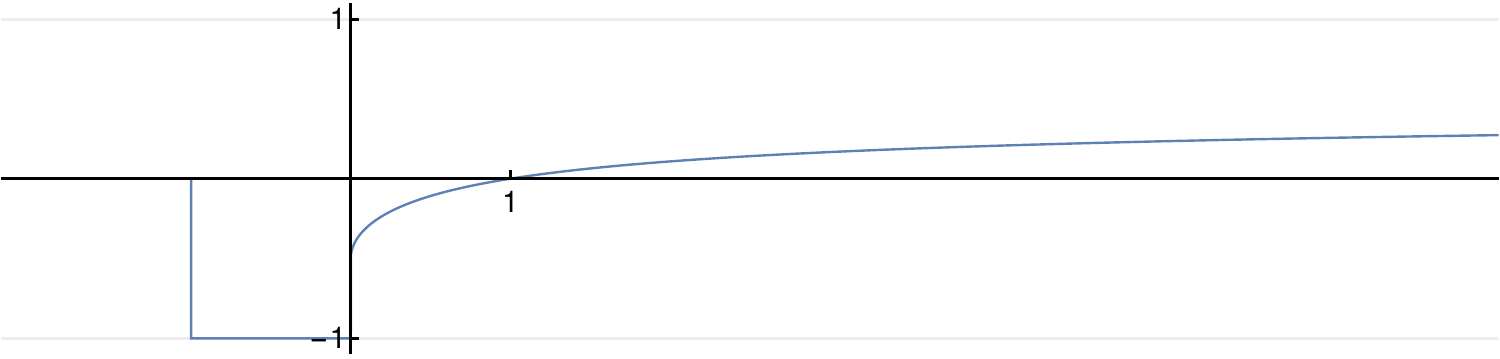} \\
(b)
\caption{The function $\ph$ in: (a)~Example~\ref{ex:rw}; (b)~Example~\ref{ex:rrw}.}
\label{fig:rw}
\end{figure}

\begin{example}[discrete Poisson kernel of a half-plane for a simple random walk]
\label{ex:rw}
Consider a simple random walk $(X_n, Y_n)$ on the square lattice $\Z^2$, performing steps $(\pm 1, 0)$ or $(0, \pm 1)$ with probabilities $\tfrac{1}{4}$. Suppose that $(X_0, Y_0) = (0, y)$, and let $N = \min\{n \in \N : Y_n = 0\}$ denote the hitting time of $\Z \times \{0\}$. Then $X_N$ has a bell-shaped probability mass function.

In order to prove the above claim, we follow the approach described in Remark~\ref{rem:ex}. By definition, the generating function $F_y$ of the probability mass function of $X_N$ satisfies
\formula{
 F_y(z) & = \frac{z F_y(z) + z^{-1} F_y(z) + F_{y + 1}(z) + F_{y - 1}(z)}{4}
}
when $|z| = 1$ and $y \ge 1$. Additionally, we have $F_0(z) = 1$ and $|F_y(z)| \le 1$ when $|z| = 1$ and $y \ge 1$. The solution of this second-order linear recurrence equation is given by
\formula{
 F_y(z) & = \biggl(2 - \frac{z + z^{-1}}{2} - \sqrt{1 - \frac{z + z^{-1}}{2}} \sqrt{3 - \frac{z + z^{-1}}{2}}\biggr)^y
}
when $|z| = 1$; we omit the details. Observe that for every $y \ge 0$, the right-hand side extends to a holomorphic function in the upper complex half-plane $\im z > 0$. Indeed: when $\im z > 0$, then one easily finds that $1 - (z + z^{-1}) / 2 \in \C \setminus (-\infty, 0]$, and therefore $\sqrt{1 - (z + z^{-1}) / 2}$ and $\sqrt{3 - (z + z^{-1}) / 2}$ are well-defined and holomorphic in this region. In Appendix~\ref{app:b} we show that $F_1$ is given by~\eqref{eq:main}, with $b^+ = b^- = 0$, with an appropriate constant $c$, and with
\formula{
 \ph(s) & = \lim_{t \to 0^+} \frac{1}{\pi} \, \Arg F_1(s + i t) .
}
Additionally, we verify that (see Figure~\ref{fig:rw}(a))
\formula{
 \ph(s) & = \begin{cases}
  0 \vphantom{\dfrac{\sqrt{s^{-1}}}{s^{-1}}} & \text{if $s < 0$;} \\
  -1 \vphantom{\dfrac{\sqrt{s^{-1}}}{s^{-1}}} & \text{if $0 < s < 3 - 2 \sqrt{2}$;} \\
  -\dfrac{1}{\pi} \arccot \dfrac{4 - s - s^{-1}}{\sqrt{s + s^{-1} - 2} \sqrt{6 - s - s^{-1}}} & \text{if $3 - 2 \sqrt{2} < s < 1$;} \\
  \dfrac{1}{\pi} \arccot \dfrac{4 - s - s^{-1}}{\sqrt{s + s^{-1} - 2} \sqrt{6 - s - s^{-1}}} & \text{if $1 < s < 3 - \sqrt{2}$;} \\
  1 \vphantom{\dfrac{\sqrt{s^{-1}}}{s^{-1}}} & \text{if $s > 3 + 2 \sqrt{2}$.}
 \end{cases}
}
Since $F_y(z) = (F_1(z))^y$, we conclude that for every $y = 0, 1, 2, \ldots$ the function $F_y$ has the representation~\eqref{eq:main} with $b^+ = b^- = 0$, with constant $c$ replaced by $c y$, and with $\ph$ replaced by $y \ph$. Since $\ph$ is nondecreasing on $(0, \infty)$ and constant on $(-\infty, 0)$, it follows by Theorem~\ref{thm:main} that $F_y$ is the generating function of a two-sided bell-shaped sequence, that is, $X_N$ has a bell-shaped probability mass function.
\end{example}

\begin{example}[discrete Poisson kernel of a half-plane for two independent simple random walks]
\label{ex:rrw}
Consider a pair $(X_n, Y_n)$ of independent simple random walks on $\Z$, so that $(X_n, Y_n)$ is a random walk in $\Z^2$ which makes steps $(\pm 1, \pm 1)$ with probabilities $\tfrac{1}{4}$. In other words, $(X_n, Y_n)$ is the simple random walk on the diagonal square lattice (the usual square lattice rotated by $\tfrac{\pi}{4}$). Suppose that $(X_0, Y_0) = (0, y)$, and let $N = \min\{n \in \N : Y_n = 0\}$ denote the hitting time of $\Z \times \{0\}$. Then $\tfrac{1}{2} (X_N + y)$ has a bell-shaped probability mass function.

As in the previous example, we follow the approach described in Remark~\ref{rem:ex}. The generating function $F_y$ of the probability mass function of $\tfrac{1}{2} (X_N + y)$ satisfies
\formula{
 F_y(z) & = \frac{z F_{y + 1}(z) + F_{y + 1}(z) + F_{y - 1}(z) + z^{-1} F_{y - 1}(z)}{4}
}
when $|z| = 1$ and $y \ge 1$. Furthermore, $F_0(z) = 1$ and $|F_y(z)| \le 1$ when $|z| = 1$ and $y \ge 1$. The solution of this second-order linear recurrence equation is given by
\formula{
 F_y(z) & = \biggl(\frac{2 + i (z^{1/2} - z^{-1/2})}{z + 1}\biggr)^y
}
when $|z| = 1$ and $\im z > 0$; again we omit the details. Clearly, the right-hand side defines a holomorphic function in the upper complex half-plane $\im z > 0$. In Appendix~\ref{app:b} we prove that $F_1$ is given by~\eqref{eq:main}, with $b^+ = b^- = 0$, with an appropriate constant $c$, and with
\formula{
 \ph(s) & = \lim_{t \to 0^+} \frac{1}{\pi} \, \Arg F_1(s + i t) .
}
Additionally, we verify that (see Figure~\ref{fig:rw}(b))
\formula{
 \ph(s) & = \begin{cases}
  0 \vphantom{\dfrac{\sqrt{s^{-1}}}{s^{-1}}} & \text{if $s < -1$;} \\
  -1 \vphantom{\dfrac{\sqrt{s^{-1}}}{s^{-1}}} & \text{if $-1 < s < 0$;} \\
  \dfrac{1}{\pi} \arctan \dfrac{s^{1/2} - s^{-1/2}}{2} & \text{if $s > 0$.}
 \end{cases}
}
Since $F_y(z) = (F_1(z))^y$, as in the previous example we conclude that for every $y = 0, 1, 2, \ldots$ the function $F_y$ has the representation~\eqref{eq:main} with $b^+ = b^- = 0$, with constant $c$ replaced by $c y$, and with $\ph$ replaced by $y \ph$. Additionally, $\ph$ is nondecreasing on $(0, \infty)$ and stepwise decreasing on $(-\infty, 0)$, and hence, by Theorem~\ref{thm:main}, $F_y$ is the generating function of a two-sided bell-shaped sequence, as desired.
\end{example}

\subsection{Organisation of the paper}

The remaining part of this article is divided into two sections. In Preliminaries we recall the notions of Pick functions (Section~\ref{sec:pick}), generating functions (Section~\ref{sec:gen}), absolute and complete monotonicity (Section~\ref{sec:amcm}), and Pólya frequency sequences (Section~\ref{sec:pff}). We also prove various auxiliary results, including an inversion formula (Section~\ref{sec:post}) and a compactness lemma (Section~\ref{sec:compact}). In Section~\ref{sec:proof} of this paper we prove our main result, Theorem~\ref{thm:main}. We first discuss the easy parts (Section~\ref{sec:easy}), before we prove the most difficult step, implication \ref{thm:main:a}~$\Rightarrow$~\ref{thm:main:c} (Section~\ref{sec:hard}). Detailed calculations for the proof of the inversion formula and for the examples discussed above are given in two appendices.

%
%

\section{Preliminaries}
\label{sec:pre}

Throughout the paper, we denote by $x\rf{n}$ the \emph{rising factorial}:
\formula{
 x\rf{n} & = x (x + 1) \ldots (x + n - 1) .
}
By $\log z$ for $z \in \C \setminus (-\infty, 0]$ we denote the principal branch of the complex logarithm. We also set $\log(-s) = \log s + i \pi$, so that $\log$ becomes continuous on the closed upper complex half-plane with $0$ removed. By $\Arg z = \im \log z$ we denote the principal argument of $z \in \C \setminus \{0\}$.


\subsection{Pick functions}
\label{sec:pick}

A \emph{Pick function}, also known under the names \emph{Herglotz function}, \emph{Nevanlinna function} or \emph{Nevanlinna--Pick} function, is a holomorphic function $f$ which maps the upper complex half-plane $\{z \in \C : \im z > 0\}$ to its closure; that is, $\im f(z) \ge 0$ whenever $\im z > 0$. By Theorem~II.I in~\cite{donoghue}, every Pick function admits the \emph{Stieltjes representation}
\formula[eq:pick:int]{
 f(z) & = b z + c + \int_{\R} \biggl(\frac{1}{s - z} - \frac{s}{s^2 + 1}\biggr) \mu(ds) ,
}
where $b \ge 0$, $c \in \R$ and $\mu$ is a measure on $\R$ satisfying the integrability condition $\int_{\R} (s^2 + 1)^{-1} \mu(ds) < \infty$. The parameters $b, c, \mu$ are otherwise arbitrary, and they are uniquely determined by
\formula{
 b & = \lim_{t \to \infty} \frac{\im f(i t)}{t} \, , & c & = \re f(i) ,
}
and, in the sense of vague convergence of measures,
\formula[eq:pick:mu]{
 \mu(ds) & = \lim_{t \to 0^+} \frac{1}{\pi} \, \im f(s + i t) ds ;
}
see Lemma~II.1 in~\cite{donoghue} or equation~(3.10) in~\cite{remling}. The measure $\mu$ is the \emph{Stieltjes measure} of $f$.

As in Remark~\ref{rem:alt}, formula~\eqref{eq:pick:int} can be rewritten as
\formula[eq:pick:alt]{
 f(z) & = c + \int_{\R \cup \{\infty\}} \frac{1 + s z}{s - z} \, \sigma(ds) ,
}
where $c \in \R$ and $\sigma$ is a finite measure on $\R \cup \{\infty\}$. More precisely, $\sigma(ds) = b \delta^\infty(ds) + (s^2 + 1)^{-1} \mu(ds)$, and in the above equation we agree that the integrand is equal to $z$ when $s = \infty$.

It is easy to see that if $f$ is a Pick function and $f$ is not constant $0$, then $\log f$ is another Pick function. Observe that $0 \le \im \log f(z) \le \pi$ when $\im z > 0$. By formula~\eqref{eq:pick:mu} (applied to $\log f$), the Stieltjes measure corresponding to $\log f$ necessarily has a density function with values in $[0, 1]$. Additionally, the corresponding coefficient $b$ is equal to zero. This brings us to the \emph{exponential representation} of the Pick function $f$:
\formula[eq:pick:exp]{
 f(z) & = \exp\biggl(c + \int_{-\infty}^\infty \biggl(\frac{1}{s - z} - \frac{s}{s^2 + 1}\biggr) \ph(s) ds\biggr) ,
}
where $c \in \R$ and $\ph$ is a Borel function on $\R$ with values in $[0, 1]$. Furthermore, $c = \log |f(i)|$, $\ph$ is determined uniquely up to equality almost everywhere, and
\formula[eq:pick:phi]{
 \ph(s) & = \lim_{t \to 0^+} \frac{1}{\pi} \, \Arg f(s + i t)
}
for almost every $s \in \R$. For further discussion, we refer to equation II.6 in~\cite{donoghue} and to Section~7.2 in~\cite{remling}.


\subsection{Sequences and generating functions}
\label{sec:gen}

A (real) \emph{two-sided sequence} is a function $a : \Z \to \R$. For clarity, we often write $a(k)$ rather than $a_k$ for the entries of the sequence, and we usually write `the sequence $a(k)$' instead of the more formal `the sequence $a$'. We identify a \emph{one-sided sequence} with the corresponding two-sided sequence $a(k)$ such that $a(k) = 0$ when $k < 0$.

The \emph{forward difference operator} is defined by
\formula{
 \Delta a(k) & = a(k + 1) - a(k) ,
}
and its powers $\Delta^n$ are defined in a straightforward way for $n = 0, 1, 2, \ldots$\, Clearly, if $a(k)$ is a summable sequence, then $\Delta a(k)$ summable. Furthermore, if $a(k)$ converges eventually monotonically to zero as $k \to \infty$ and as $k \to -\infty$, then $\Delta a(k)$ has an eventually constant sign as $k \to \infty$ and as $k \to -\infty$, and hence $\Delta a(k)$ is a summable sequence, which sums up to zero.

For a two-sided sequence $a(k)$ we define its generating function as
\formula{
 F(z) & = \sum_{k = -\infty}^\infty a(k) z^k
}
whenever the series converges (absolutely or conditionally). Clearly, the generating function of $\Delta a(k)$ is given by $(z - 1) F(z)$ whenever $F(z)$ is defined.

If $a(k)$ is a bounded one-sided sequence, the generating function $F$ is well defined and holomorphic in the unit disc $|z| < 1$ in the complex plane. If $a(k)$ is additionally summable, then $F(z)$ converges absolutely and it is continuous in the closed unit disc $|z| \le 1$. If a one-sided sequence $a(k)$ converges eventually monotonically to zero as $k \to \infty$, then, by Dirichlet's test, the generating function $F(z)$ of $a(k)$ converges in the closed disc $|z| \le 1$ except possibly $z = 1$. Furthermore, in this case the sequence of increments $\Delta a(k)$ is summable, its generating function is a continuous function on the closed unit disc $|z| \le 1$, and for $z \ne 1$ it is equal to $(z - 1) F(z)$. It follows that $F$ is continuous in the closed disc $|z| \le 1$, except possibly $z = 1$.

For a two-sided summable sequence $a(k)$, the generating function $F(z)$ converges absolutely on the unit circle $|z| = 1$. For general $a(k)$, the generating function $F(z)$ may diverge at every point $z$. However, if $a(k)$ converges eventually monotonically to zero as $k \to \infty$ and as $k \to -\infty$, then, by the discussion in the preceding paragraph, $F(z)$ converges on the unit circle $|z| = 1$ except possibly $z = 1$, and it is a continuous function on this set. Furthermore, in this case $F$ determines uniquely the sequence $a(k)$. Indeed: the continuous function $(z - 1) F(z)$ on the unit circle is the generating function of the summable sequence $\Delta a(k)$, which is thus determined uniquely by $F$: it is the sequence of Fourier coefficients of $(z - 1) F(z)$. Furthermore, since $a(k)$ converges to $0$ as $k \to -\infty$, the sequence $\Delta a(k)$ uniquely determines the sequence $a(k)$.

The \emph{convolution} of two-sided sequences $a(k)$ and $b(k)$ is defined in the usual way:
\formula{
 (a * b)(k) & = \sum_{j = -\infty}^\infty a(j) b(k - j)
}
whenever the series converges (absolutely or conditionally) for every $k \in \Z$. Clearly, $\Delta(a * b) = \Delta a * b = a * \Delta b$ whenever $a * b$ is well-defined. Suppose that $F$ and $G$ are generating functions of two-sided sequences $a(k)$ and $b(k)$. If $a(k)$ and $b(k)$ are summable sequences, then the generating function of their convolution is equal to $F(z) G(z)$ when $|z| = 1$. More generally, if $a(k)$ is summable and $b(k)$ converges eventually monotonically to zero as $k \to \infty$ and as $k \to -\infty$, then the convolution of these sequences converges absolutely, $(a * b)(k)$ converges to zero as $k \to \pm \infty$, and the generating function of the convolution is again given by $F(z) G(z)$ when $|z| = 1$, $z \ne 1$.

We recall the Abel's \emph{summation by parts} formula:
\formula{
 \sum_{k = -\infty}^\infty a(k) \Delta b(k) & = -\sum_{k = -\infty}^\infty \Delta a(k - 1) b(k)
}
whenever either of the sums converges and $a(k) b(k)$ converges to zero as $k \to \pm\infty$. Iterating this identity $n$ times, we find that
\formula{
 \sum_{k = -\infty}^\infty a(k) \Delta^n b(k) & = (-1)^n \sum_{k = -\infty}^\infty \Delta^n a(k - n) b(k) ,
}
provided that either of the sums converges and for every $j = 0, 1, \ldots, n - 1$ the sequence $\Delta^j a(k - j) \Delta^{n - 1 - j} b(k)$ converges to zero as $k \to \pm\infty$.

A two-sided sequence $a(k)$ is said to change sign at least $n$ times if there is a subsequence $a(k_0), a(k_1), \ldots, a(k_n)$ of alternating signs, in the sense that $a(k_{j - 1}) a(k_j) < 0$ for $j = 1, 2, \ldots, n$. If $a(k)$ changes sign at least $n$ times, but not $n + 1$ times, then we simply say that $a(k)$ \emph{changes sign $n$ times}.

By a discrete counterpart of Rolle's theorem, if $a(k)$ changes sign $n$ times and $a(k)$ converges to zero as $k \to \pm\infty$, then $\Delta a(k)$ changes sign at least $n + 1$ times: there is at least one sign change of $\Delta a(k)$ on each (finite or infinite) interval delimited by the locations of sign changes of $a(k)$.


\subsection{Absolutely monotone-then-completely monotone sequences}
\label{sec:amcm}

Recall that a one-sided sequence $a(k)$, $k \ge 0$, is \emph{completely monotone}, or $\cm$, if $(-1)^n \Delta^n a(k) \ge 0$ for every $n \ge 0$ and $k \ge 0$. A reversed one-sided sequence $a(k)$ defined for $k \le 0$ is called \emph{absolutely monotone}, or $\am$, if the sequence $a(-k)$, $k \ge 0$, is completely monotone. Equivalently, $a(k)$ is absolutely monotone if $\Delta^n a(k) \ge 0$ for every $n \ge 0$ and $k \le -n$.

The celebrated Hausdorff's theorem identifies completely monotone sequences with moment sequences of measures on $[0, 1]$: a sequence $a(k)$, $k \ge 0$, is completely monotone if and only if there exists a finite measure $\mu$ on $[0, 1]$ such that
\formula{
 a(k) & = \int_{[0, 1]} s^k \mu(ds) .
}
Furthermore, by the dominated convergence theorem, $a(k)$ converges to zero if and only if $\mu(\{1\}) = 0$. By Fubini's theorem, the generating function of a $\cm$ sequence is given by
\formula[eq:cm:gf]{
 F(z) & = \sum_{k = 0}^\infty a(k) z^k = \int_{[0, 1]} \frac{1}{1 - s z} \, \mu(ds)
}
in the open unit disc $|z| < 1$. If $a(k)$ converges to zero as $k \to \infty$, then formula~\eqref{eq:cm:gf} holds also when $|z| = 1$ and $z \ne 1$. By Fubini's theorem, formula~\eqref{eq:cm:gf} also holds when $z = 1$, with both sides possibly infinite. We conclude that if $a(k)$ is $\cm$ and it converges to zero, then~\eqref{eq:cm:gf} holds in the closed unit disc $|z| \le 1$. Observe that the right-hand side of~\eqref{eq:cm:gf} in fact defines a holomorphic function of $z \in \C \setminus [1, \infty)$.

Following the analogous definition of $\amcm$ functions in~\cite{kwasnicki}, in this section we introduce the class of $\amcm$ sequences.

\begin{definition}
A two-sided sequence $a(k)$ is said to be \emph{absolutely monotone-then-completely monotone}, or $\amcm$ in short, if it is not identically zero, the one-sided seuqence $a(k)$, $k \ge 0$, is completely monotone, while the reversed one-sided sequence $a(k)$, $k \le 0$, is absolutely monotone.
\end{definition}

Equivalently, $a(k)$ is $\amcm$ if and only if both one-sided sequences: $a(k)$, $k \ge 0$, and $a(-k)$, $k \ge 0$, are completely monotone. By Hausdorff's characterisation of completely monotone sequences, a two-sided sequence $a(k)$ is $\amcm$ if and only if there are finite measures $\mu_+$, $\mu_-$ such that
\formula{
 a(k) & = \int_{[0, 1]} s^k \mu_+(ds) && \text{for $k \ge 0$,} \\
 a(k) & = \int_{[0, 1]} s^{-k} \mu_-(ds) && \text{for $k \le 0$,}
}
and by comparing the two conflicting definitions of $a(0)$ we see that necessarily $\mu_+([0, 1]) = \mu_-([0, 1])$. Furthermore, $a(k)$ converges to zero as $k \to \pm\infty$ if and only if $\mu_+(\{1\}) = \mu_-(\{1\}) = 0$, and in this case, by~\eqref{eq:cm:gf}, the generating function of $a(k)$ is given by
\formula{
 F(z) = \sum_{k = -\infty}^\infty a(k) z^k & = \sum_{k = 0}^\infty a(k) z^k + \sum_{k = 0}^\infty a(-k) z^{-k} - a(0) \\
 & = \int_{[0, 1)} \frac{1}{1 - s z} \, \mu_+(ds) + \int_{[0, 1)} \frac{1}{1 - s / z} \, \mu_-(ds) - a(0)
}
whenever $|z| = 1$, $z \ne 1$. Since $a(0) = \int_{[0, 1)} \mu_+(ds)$, the above expression simplifies to
\formula*[eq:amcm:gf]{
 F(z) & = \int_{(0, 1)} \frac{s z}{1 - s z} \, \mu_+(ds) + \int_{[0, 1)} \frac{1}{1 - s / z} \, \mu_-(ds) .
}
Conversely, if $\mu_+$ and $\mu_-$ are finite measures on $[0, 1)$ such that $\mu_+([0, 1)) = \mu_-([0, 1))$, then the above formula defines the generating function $F$ of a unique $\amcm$ sequence $a(k)$ which converges to zero as $k \to \pm\infty$.

Observe that the right-hand side of~\eqref{eq:amcm:gf} defines a holomorphic function of $z \in \C \setminus [0,\infty)$, and we use the same symbol $F(z)$ to denote this holomorphic extension. The main result of this section describes the exponential representation of the function $F(z)$.

\begin{lemma}
\label{lem:amcm}
If $F$ is the generating function of an $\amcm$ sequence $a(k)$ which converges to zero as $k \to \pm\infty$, then $F$ extends to a holomorphic function on $\C \setminus [0, \infty)$, given by
\formula[eq:amcm:gf:exp]{
 F(z) & = \exp\biggl(c + \int_0^\infty \biggl(\frac{1}{s - z} - \frac{s}{s^2 + 1}\biggr) \ph(s) ds\biggr) ,
}
and we have
\formula[eq:amcm:gf:exp:lim]{
 \lim_{t \to 0} (e^{i t} - 1) F(e^{i t}) & = 0 .
}
Here $c \in \R$ and $\ph$ is a Borel function on $(0, \infty)$ which takes values in $[-1, 0]$ over $(0, 1)$ and in $[0, 1]$ over $(1, \infty)$. If we define $\ph(s) = 0$ for $s \le 0$, then
\formula[eq:amcm:gf:exp:phi]{
 \ph(s) & = \frac{1}{\pi} \lim_{t \to 0^+} \Arg F(s + i t)
}
for almost every $s \in \R$.

Conversely, if $F$ is given by~\eqref{eq:amcm:gf:exp} and all conditions listed above are satisfied, then $F$ is the generating function of a unique $\amcm$ sequence $a(k)$ which converges to zero as $k \to \pm\infty$.
\end{lemma}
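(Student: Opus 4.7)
My plan is to prove the equivalence in two directions; the forward implication (from the $\amcm$ structure of $a(k)$ to the representation~\eqref{eq:amcm:gf:exp}) contains all the substance, while the converse is a matter of extracting Fourier coefficients and recognising their structure.

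I would begin by reading off the holomorphic extension of $F$ to $\C\setminus[0,\infty)$ directly from~\eqref{eq:amcm:gf}, since both integrands are holomorphic functions of $z$ on this domain. The limit condition~\eqref{eq:amcm:gf:exp:lim} follows immediately from the discussion in Section~\ref{sec:gen}: the sequence $\Delta a(k)$ is summable with sum zero, and therefore $(z-1)F(z)$, the generating function of $\Delta a(k)$, extends continuously to the unit circle and vanishes at $z=1$. The structural heart of the argument is a decomposition $F=C+P-Q$, obtained by the change of variable $t=1/s$ in the first integral of~\eqref{eq:amcm:gf} and elementary rearrangement. Here $C=\mu_-([0,1))$ is a nonnegative constant, $P(z)=\int_{(1,\infty)}\frac{z}{t-z}\,\tilde\mu_+(dt)$, where $\tilde\mu_+$ is the image of $\mu_+|_{(0,1)}$ under $s\mapsto 1/s$, is a Pick function with Stieltjes measure on $(1,\infty)$, and $Q(z)=\int_{(0,1)}\frac{s}{s-z}\,\mu_-(ds)$ is a Pick function with Stieltjes measure on $(0,1)$. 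From this decomposition I would extract the boundary behaviour $\im F(x+i0^+)\le 0$ for $x\in(0,1)$ and $\im F(x+i0^+)\ge 0$ for $x\in(1,\infty)$, while the inequalities $\frac{1}{1+us}>\frac{1}{1+u}>\frac{s}{u+s}$ on $s\in[0,1)$, combined with the constraint $\mu_+([0,1))=\mu_-([0,1))$, yield $F(x)>0$ for $x\in(-\infty,0)$.

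The main obstacle is zero-freeness of $F$ on $\C\setminus[0,\infty)$. My strategy is to handle finitely supported $\mu_\pm$ first: then $F$ is a rational function with simple real poles in $(0,1)\cup(1,\infty)$, bounded at infinity, with residues of prescribed signs; the argument principle applied to a large half-disc in the upper half-plane (suitably indented around the real poles), together with the sign information on $\im F$ on the boundary and the positivity of $F$ on $(-\infty,0)$, forces all zeros of $F$ into $[0,\infty)$. The general case then follows by weak approximation of $\mu_\pm$ by finitely supported measures and Hurwitz's theorem on compact subsets of $\C\setminus[0,\infty)$. Granted zero-freeness, $g:=\log F$ is holomorphic on $\C\setminus[0,\infty)$, real on $(-\infty,0)$, with $\im g(x+i0^+)\in[-\pi,0]$ for $x\in(0,1)$, $\im g(x+i0^+)\in[0,\pi]$ for $x\in(1,\infty)$, and with growth at $0$ and at $\infty$ inherited from the corresponding behaviour of $F$. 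A Nevanlinna--Herglotz type representation for such functions then yields~\eqref{eq:amcm:gf:exp} with $\pi\ph(s)=\lim_{t\to 0^+}\im g(s+it)$, the sign constraints on $\ph$ being inherited directly from the boundary analysis.

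For the converse, given $F$ of the form~\eqref{eq:amcm:gf:exp} with $\ph$ as prescribed, the function $F$ is automatically holomorphic and zero-free on $\C\setminus[0,\infty)$. The measures $\mu_-$ on $(0,1)$ and $\tilde\mu_+$ on $(1,\infty)$ in the representation~\eqref{eq:amcm:gf} are recovered by reading off the jumps $\im F(x+i0^+)$ across the arcs $(0,1)$ and $(1,\infty)$ of the slit. Expanding the resulting integral representation of $F$ on the unit circle in Laurent series identifies the Fourier coefficients $a(k)$, and Hausdorff's characterisation of $\cm$ sequences as moment sequences on $[0,1)$ shows that $(a(k))_{k\ge 0}$ and $(a(-k))_{k\ge 0}$ are both completely monotone, establishing the $\amcm$ structure.
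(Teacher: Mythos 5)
Your proposal diverges from the paper at the crucial step. You decompose $F = C + P - Q$ with $P, Q$ Pick functions (which is essentially a rearrangement of~\eqref{eq:amcm:gf}), and then face the problem of proving that $F$ is zero-free on $\C\setminus[0,\infty)$ in order to define $\log F$ and invoke a Herglotz-type representation. This is the hard point and your argument-principle sketch is a genuine gap: a difference of Pick functions can certainly vanish in the upper half-plane, so zero-freeness is not a soft consequence of the decomposition. Making the contour argument rigorous would require handling the indentations around countably many poles of $F$ on $[0,\infty)$; the behaviour of $F$ at $z = 1$, where $F$ may blow up; the degenerate cases $\mu_+(\{0\}) = 0$ or $\mu_-(\{0\}) = 0$, in which the boundary curve $F(\Gamma)$ passes through the origin (via $z \to 0$ or $z \to \infty$) and the argument principle does not apply directly; and finally the passage from finitely supported to general $\mu_\pm$. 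None of this is worked out, and it is not at all routine.

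The paper avoids the entire issue with one observation: the function $G(z) = (1 - 1/z)\,F(z)$, which is the generating function of $\Delta a(k-1)$, is itself a Pick function. This is established by a direct algebraic manipulation of~\eqref{eq:amcm:gf} that produces the Stieltjes form~\eqref{eq:pick:alt} for $G$. Once this is known, $G$ is automatically zero-free in the upper half-plane, and the exponential representation~\eqref{eq:pick:exp} yields a density $\tilde\ph$ with values in $[0,1]$; dividing by $(1 - 1/z)$, whose own exponential density is exactly $\ind_{(0,1)}$, shifts $\tilde\ph$ to $\ph = \tilde\ph - \ind_{(0,1)}$ and gives~\eqref{eq:amcm:gf:exp} together with the desired sign constraints. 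The single multiplicative factor $(1 - 1/z)$ is precisely the device that collapses your two Pick pieces $P$ and $-Q$ into a single Pick function and turns the zero-freeness problem into a triviality; that is the idea you are missing.

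Your converse sketch (read off $\mu_\pm$ from boundary jumps, Laurent-expand, apply Hausdorff) is reasonable in spirit, but again the paper routes through $G = (1 - 1/z)F$: one shows $G$ is Pick from~\eqref{eq:amcm:gf:exp}, writes its Stieltjes representation, and then uses $F > 0$ on $(-\infty, 0)$ to kill the atom at $\infty$ and the limit condition~\eqref{eq:amcm:gf:exp:lim} to kill the atom at $1$, before landing back at~\eqref{eq:amcm:gf} with $\mu_+(\{0\}) \ge 0$. In your outline the role of~\eqref{eq:amcm:gf:exp:lim} is invisible, yet without it $G$ could carry an extra term $\sigma(\{1\})\frac{1+z}{1-z}$ and the resulting $F$ would not come from an $\amcm$ sequence vanishing at $\pm\infty$.
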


We split the proof into two parts.

\begin{proof}[Proof of the direct part]
Suppose that $F$ is the generating function of an $\amcm$ sequence $a(k)$ which converges to zero as $k \to \pm\infty$. We divide the proof into three steps.

\emph{Step 1.} Recall that $(z - 1) F(z)$ is the generating function of the sequence $\Delta a(k)$, which is summable, and which sums up to zero. Thus, $(z - 1) F(z)$ extends continuously to the unit circle $|z| = 1$, and it takes value zero at $z = 1$. As in Remark~\ref{rem:c5}, this proves~\eqref{eq:amcm:gf:exp:lim}.

\emph{Step 2.} We already know that $F$ is given by~\eqref{eq:amcm:gf}. In order to derive~\eqref{eq:amcm:gf:exp}, we first claim that $G(z) = (1 - 1 / z) F(z)$, which is the generating function of the sequence $\Delta a(k - 1)$, is a Pick function. By~\eqref{eq:amcm:gf}, we have
\formula*[eq:amcm:gf:aux:1]{
 G(z) & = \int_{(0, 1)} \frac{s z - s}{1 - s z} \, \mu_+(ds) + \int_{[0, 1)} \frac{1 - 1 / z}{1 - s / z} \, \mu_-(ds)
}
Observe that
\formula{
 \frac{s z - s}{1 - s z} & = \frac{s (1 - s)}{s^2 + 1} \, \frac{1 + z / s}{1 / s - z} - \frac{s (s + 1)}{s^2 + 1}
}
and
\formula{
 \frac{1 - 1 / z}{1 - s / z} & = \frac{1 - s}{s^2 + 1} \, \frac{1 + s z}{s - z} + \frac{s + 1}{s^2 + 1} .
}
Thus, \eqref{eq:amcm:gf:aux:1} can be rewritten as in~\eqref{eq:pick:alt}:
\formula[eq:amcm:gf:aux:2]{
 G(z) & = d + \int_{[0, \infty)} \frac{1 + s z}{s - z} \, \sigma(ds) ,
}
where
\formula{
 \sigma(A) & = \int_{(0, 1)} \ind_A(1 / s) \, \frac{s (1 - s)}{s^2 + 1} \, \mu_+(ds) + \int_{[0, 1)} \ind_A(s) \, \frac{1 - s}{s^2 + 1} \, \mu_-(ds)
}
and
\formula{
 d & = -\int_{(0, 1)} \frac{s (s + 1)}{s^2 + 1} \, \mu_+(ds) + \int_{[0, 1)} \frac{s + 1}{s^2 + 1} \, \mu_-(ds) .
}
In particular, $G$ is indeed a Pick function, and our claim is proved. We also note that $\sigma(\{1\}) = 0$.

\emph{Step 3.} The exponential representation~\eqref{eq:pick:exp} of $G$ reads
\formula[eq:amcm:gf:aux:3]{
 G(z) & = \exp\biggl(\tilde{c} + \int_{-\infty}^\infty \biggl(\frac{1}{s - z} - \frac{s}{s^2 + 1}\biggr) \tilde{\ph}(s) ds\biggr)
}
when $\im z > 0$, where $\tilde{c} \in \R$ and $\tilde{\ph}$ is a Borel function on $\R$ taking values in $[0, 1]$. Furthermore,
\formula{
 \tilde{\ph}(s) & = \frac{1}{\pi} \lim_{t \to 0^+} \Arg G(s + i t)
}
for almost every $s > 0$. By~\eqref{eq:amcm:gf}, when $z < 0$ we have
\formula{
 F(z) & = -\int_{(0, 1)} \frac{s |z|}{1 + s |z|} \, \mu_+(ds) + \int_{[0, 1)} \frac{1}{1 + s / |z|} \, \mu_-(ds) \\
 & > -\frac{|z|}{1 + |z|} \, \mu_+((0, 1)) + \frac{1}{1 + 1 / |z|} \, \mu_-([0, 1)) \\
 & = \frac{|z|}{1 + |z|} \, (\mu_-([0, 1)) - \mu_+((0, 1))) \ge 0
}
(recall that $\mu_+([0, 1)) = \mu_-([0, 1)) = a(0)$), and hence $G(z) = (1 - 1 / z) F(z) > 0$ when $z < 0$. Since $G$ is continuous on $\C \setminus [0, \infty)$, the above formula for $\tilde{\ph}(s)$ implies that $\tilde{\ph}(s) = 0$ for $s < 0$. Let us define
\formula{
 \ph(s) & = \tilde{\ph}(s) - \ind_{(0, 1)}(s) ,
}
so that $\ph(s) = 0$ for $s < 0$, $\ph(s) \in [-1, 0]$ for $s \in (0, 1)$, and $\ph(s) \in [0, 1]$ for $s \in (1, \infty)$. With this definition, we find that
\formula{
 G(z) & = \exp\biggl(\tilde{c} + \int_0^1 \biggl(\frac{1}{s - z} - \frac{s}{s^2 + 1}\biggr) ds + \int_0^\infty \biggl(\frac{1}{s - z} - \frac{s}{s^2 + 1}\biggr) \ph(s) ds\biggr)
}
when $\im z > 0$. By a straightforward calculation, the first integral on the right-hand side is equal to $\log(1 - 1 / z) - \tfrac{1}{2} \log 2$, and thus
\formula{
 G(z) & = (1 - 1 / z) \exp\biggl(c + \int_0^\infty \biggl(\frac{1}{s - z} - \frac{s}{s^2 + 1}\biggr) \ph(s) ds\biggr) ,
}
where $c = \tilde{c} - \tfrac{1}{2} \log 2$. This proves~\eqref{eq:amcm:gf:exp}. Finally, we have
\formula{
 \frac{1}{\pi} \lim_{t \to 0^+} \Arg F(s + i t) & = \frac{1}{\pi} \lim_{t \to 0^+} \biggl(\Arg G(s + i t) - \Arg\biggl(1 - \frac{1}{s + i t}\biggr)\biggr) \\
 & = \tilde{\ph}(s) - \ind_{(0, 1)}(s) = \ph(s)
}
for almost every $s \in \R$. Formula~\eqref{eq:amcm:gf:exp:phi} follows, and the proof is complete.
\end{proof}

\begin{proof}[Proof of the converse part]
Suppose that $\ph$ is a Borel function on $(0, \infty)$ which takes values in $[-1, 0]$ on $(0, 1)$ and in $[0, 1]$ on $(1, \infty)$, $c \in \R$, $F$ is given by~\eqref{eq:amcm:gf:exp}, and~\eqref{eq:amcm:gf:exp:lim} holds. We essentially reverse the argument from the proof of the direct part.

\emph{Step 1.} Observe that $F$ is a holomorphic function in $\C \setminus [0, \infty)$, and $F(z) > 0$ for $z < 0$. Reversing Step~3 of the proof of the direct part, we find that $G(z) = (1 - 1 / z) F(z)$ is a Pick function, with exponential representation~\eqref{eq:amcm:gf:aux:3}, where $\tilde{\ph}(s) = \ph(s) + \ind_{(0, 1)}(s)$ and $\tilde{c} \in \R$.

\emph{Step 2.} The Stieltjes representation~\eqref{eq:pick:alt} of the Pick function $G$ reads
\formula{
 G(z) & = d + \int_{\R \cup \{\infty\}} \frac{1 + s z}{s - z} \, \sigma(ds) ,
}
where $d \in \R$ and $\sigma$ is a finite measure on $\R \cup \{\infty\}$. Since $G$ is a holomorphic function in $\C \setminus [0, \infty)$, $\sigma$ necessarily vanishes on $(-\infty, 0)$. 

Recall that the integrand on the right-hand side is understood to be equal to $z$ for $s = \infty$. For $s > 0$ and $z < -1$ we have
\formula{
 \biggl|\frac{1 + s z}{z (s - z)}\biggr| & \le \frac{1 + s |z|}{|z| (s + |z|)} \le 1 .
}
Hence, by the dominated convergence theorem, we have
\formula{
 \lim_{z \to -\infty} \frac{G(z)}{z} & = \lim_{z \to -\infty} \int_{[0, \infty) \cup \{\infty\}} \frac{1 + s z}{z (s - z)} \, \sigma(ds) = \sigma(\{\infty\}) .
}
Since $F(z) > 0$ for $z < 0$, we have $G(z) = (1 - 1 / z) F(z) > 0$ for $z < 0$, and so the left-hand side is nonpositive. Therefore, $\sigma(\{\infty\}) = 0$. In other words, $G$ has the representation~\eqref{eq:amcm:gf:aux:2}.

Reversing the argument from Step~2 of the proof of the direct part, we find that instead of~\eqref{eq:amcm:gf:aux:1}, we have
\formula{
 G(z) & = \tilde{d} + \sigma(\{1\}) \, \frac{1 + z}{1 - z} + \int_{(0, 1)} \frac{s z - s}{1 - s z} \, \mu_+(ds) + \int_{[0, 1)} \frac{1 - 1 / z}{1 - s / z} \, \mu_-(ds)
}
for some $\tilde{d} \in \R$ and some finite measures $\mu_+$ on $(0, 1)$ and $\mu_-$ on $[0, 1)$. However, by~\eqref{eq:amcm:gf:exp:lim}, $G(e^{i t}) = e^{-i t} (e^{i t} - 1) F(e^{i t})$ converges to zero as $t \to 0$. Using the above expression for $G(e^{i t})$, the dominated convergence theorem, and the estimates
\formula{
 \biggl|\frac{s e^{i t} - s}{1 - s e^{i t}}\biggr| & = \frac{s |e^{i t} - 1|}{|1 - s e^{i t}|} \le \frac{s t}{s \sin t} \le \frac{\pi}{2}
}
for $s \in (0, 1)$ and $t \in (0, \tfrac{\pi}{2})$, and
\formula{
 \biggl|\frac{1 - e^{-i t}}{1 - s e^{-i t}}\biggr| & = \biggl|1 - \frac{(1 - s) e^{-i t}}{1 - s e^{-i t}}\biggr| \le 1 + \frac{1 - s}{|1 - s e^{-i t}|} \le 2
}
for $s \in [0, 1)$ and $t \in (0, \tfrac{\pi}{2})$, we find that
\formula{
 0 & = \lim_{t \to 0^+} G(e^{i t}) = \tilde{d} + \sigma(\{1\}) i \infty .
}
Hence, $\tilde{d} = \sigma(\{1\}) = 0$. We conclude that in fact~\eqref{eq:amcm:gf:aux:1} holds with no modifications, and with $\sigma(\{1\}) = 0$. It follows that $F$ is given by~\eqref{eq:amcm:gf}:
\formula{
  F(z) & = \int_{(0, 1)} \frac{s z}{1 - s z} \, \mu_+(ds) + \int_{[0, 1)} \frac{1}{1 - s / z} \, \mu_-(ds) .
}

\emph{Step 3.} It remains to extend the definition of $\mu_+$ at $0$ so that $\mu_+([0, 1)) = \mu_-([0, 1))$. In other words, we let $\mu_+(\{0\}) = \mu_-([0, 1)) - \mu_+((0, 1))$. By the dominated convergence theorem,
\formula{
 \lim_{z \to -\infty} F(z) & = \lim_{z \to -\infty} \int_{(0, 1)} \frac{s z}{1 - s z} \, \mu_+(ds) + \lim_{z \to -\infty} \int_{[0, 1)} \frac{1}{1 - s / z} \, \mu_-(ds) \\
 & = -\mu_+((0, 1)) + \mu_-([0, 1)) = \mu_+(\{0\}) .
}
On the other hand, $F(z) > 0$ when $z < 0$, and hence $\mu_+(\{0\}) \ge 0$. By the observation made after equation~\eqref{eq:amcm:gf}, $F(z)$ is the generating function of an $\amcm$ sequence $a(k)$ which converges to zero as $k \to \pm\infty$.
\end{proof}

Lemma~\ref{lem:amcm} is an extension of the results of Section~2.7 in~\cite{kw}, where one-sided completely monotone sequences were studied. It is closely related to the results of Section~3 in~\cite{kwasnicki}, where $\amcm$ functions were introduced.


\subsection{Pólya frequency sequences}
\label{sec:pff}

A two-sided summable sequence $a(k)$ is a \emph{Pólya frequency sequence} if it is not identically zero, and the doubly infinite matrix $(a(k - l) : k, l \in \Z)$ is totally positive, that is, all of its minors are nonnegative. Summable Pólya frequency sequences can be characterised by their generating functions, which are necessarily of the form
\formula[eq:pf:gf]{
 F(z) & = \sum_{k = -\infty}^\infty a(k) z^k = z^m \exp\biggl(b^+ z + \frac{b^-}{z} + c\biggr) \prod_{j = 1}^\infty \frac{(1 + \gamma^+_j z)(1 + \gamma^-_j / z)}{(1 - \delta^+_j z)(1 - \delta^-_j / z)} \, ,
}
where $m$ is an integer, $b^+, b^- \in [0, \infty)$, $c \in \R$, and $\gamma^\pm_j, \delta^\pm_j$ are nonnegative summable sequences with $\gamma^\pm_j \le 1$ and $\delta^\pm_j < 1$; see Theorem~8.9.5 in~\cite{karlin}.

Another characterisation of summable two-sided Pólya frequency sequences involves the variation diminishing property: a summable two-sided sequence $a(k)$ is, up to sign, a Pólya frequency sequence if and only if the convolution with $a(k)$ does not increase the number of sign changes; see Theorem~5.1.5 in~\cite{karlin}. We will need this result (in fact, only its easy direct half) in the proof the implication \ref{thm:main:b}~$\Rightarrow$~\ref{thm:main:a} in Theorem~\ref{thm:main}.

We now derive the exponential representation of generating functions of summable two-sided Pólya frequency sequences. By~\eqref{eq:pf:gf}, we have
\formula{
 F(z) & = \exp\biggl(m \log z + b^+ z + \frac{b^-}{z} + c \\
 & \hspace*{5em} + \sum_{j = 1}^\infty \bigl( \log(1 + \gamma^+_j z) + \log(1 + \gamma^-_j / z) - \log(1 - \delta^+_j z) - \log(1 - \delta^-_j / z)\bigr)\biggr)
}
when $\im z > 0$. By a straightforward calculation,
\formula{
 \log z & = \int_{-\infty}^0 \biggl(\frac{1}{s - z} - \frac{s}{s^2 + 1}\biggr) ds , \displaybreak[0] \\
 \log (1 + \gamma^+_j z) & = \int_{-\infty}^{-1 / \gamma^+_j} \biggl(\frac{1}{s - z} - \frac{s}{s^2 + 1}\biggr) ds + \log \sqrt{(\gamma^+_j)^2 + 1} , \displaybreak[0] \\
 \log (1 + \gamma^-_j / z) & = -\int_{-\gamma^-_j}^0 \biggl( \frac{1}{s - z} - \frac{s}{s^2 + 1}\biggr) ds + \log \sqrt{(\gamma^-_j)^2 + 1}, \displaybreak[0] \\
 \log (1 - \delta^+_j z) & = -\int_{1 / \delta^+_j}^\infty \biggl(\frac{1}{s - z} - \frac{s}{s^2 + 1}\biggr) ds + \log \sqrt{(\delta^+_j)^2 + 1}, \displaybreak[0] \\
 \log (1 - \delta^-_j / z) & = \int_0^{\delta^-_j} \biggl( \frac{1}{s - z} - \frac{s}{s^2 + 1}\biggr) ds + \log \sqrt{(\delta^-_j)^2 + 1}.
}
It follows that
\formula{
 F(z) & = \exp\biggl(b^+ z + \frac{b^-}{z} + d + \int_{-\infty}^\infty \biggl(\frac{1}{s - z} - \frac{s}{s^2 + 1}\biggr) \ph(s) \, ds\biggr)
}
when $\im z > 0$, where
\formula{
 d & = c + \sum_{j = 1}^\infty \biggl(\log\sqrt{(\gamma^+_j)^2 + 1} + \log\sqrt{(\gamma^-_j)^2 + 1} - \log\sqrt{(\delta^+_j)^2 + 1} - \log\sqrt{(\delta^-_j)^2 + 1}\biggr)
}
(the series converges, because $\log \sqrt{s^2 + 1} \le s$ for $s \ge 0$) and
\formula{
 \ph(s) & = m \ind_{(-\infty, 0)}(s) + \sum_{j = 1}^\infty \biggl(\ind_{(-\infty, -1 / \gamma^+_j)}(s) - \ind_{(-\gamma^-_j, 0)}(s) - \ind_{(0, \delta^-_j)}(s) + \ind_{(1 / \delta^+_j, \infty)}(s)\biggr) .
}
In particular, $\ph$ is stepwise decreasing on $(-\infty, 0)$, stepwise increasing on $(0, \infty)$, and equal to zero near $1$. Finally, we note the sequences $\gamma^\pm_j$ and $\delta^\pm_j$ are summable if and only if
\formula{
 \int_{-\infty}^\infty \frac{|\ph(s)|}{s^2 + 1} \, ds & < \infty .
}
The above reasoning can be clearly reversed, so that we obtain an equivalent form of the generating function of a summable two-sided Pólya frequency sequence. We state this result as a lemma.

\begin{lemma}
\label{lem:pf}
If $F(z)$ is the generating function of a summable two-sided Pólya frequency sequence $a(k)$, then $F$ extends to a holomorphic function in $\C \setminus \R$, given by
\formula[eq:pf:gf:exp]{
 F(z) & = \exp\biggl(b^+ z + \frac{b^-}{z} + d + \int_{-\infty}^\infty \biggl(\frac{1}{s - z} - \frac{s}{s^2 + 1}\biggr) \ph(s) \, ds\biggr) .
}
Here $b^\pm \ge 0$, $d \in \R$, and $\ph$ is a stepwise decreasing function on $(-\infty, 0)$, a stepwise increasing function on $(0, \infty)$, $\ph(s) = 0$ in a neighbourhood of $1$, and
\formula[eq:pf:gf:exp:int]{
 \int_{-\infty}^\infty \frac{|\ph(s)|}{s^2 + 1} \, ds & < \infty .
}

Conversely, if $F$ is given by~\eqref{eq:pf:gf:exp} and all conditions listed above are satisfied, then $F$ is the generating function of a unique summable two-sided Pólya frequency sequence $a(k)$.
\end{lemma}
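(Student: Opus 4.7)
The plan is to prove the equivalence in two directions; the direct half is essentially completed in the discussion preceding the lemma. Starting from the Edrei--Karlin factorisation \eqref{eq:pf:gf} (Theorem~8.9.5 in~\cite{karlin}), one takes the principal logarithm in the upper half-plane and substitutes the five integral identities displayed for $\log z$, $\log(1 + \gamma z)$, $\log(1 + \gamma/z)$, $\log(1 - \delta z)$ and $\log(1 - \delta/z)$, then reads off the coefficient of the kernel $(s-z)^{-1} - s/(s^2+1)$ to identify $\ph$ as the signed sum of indicators exhibited in the text. The three structural properties of $\ph$ are then immediate: monotonicity on each half-line follows from the sign pattern of the indicators; the vanishing of $\ph$ in a neighbourhood of $1$ holds because summability of the nonnegative sequences $\delta^\pm_j \in [0, 1)$ forces $\sup_j \delta^\pm_j < 1$; and the integrability condition~\eqref{eq:pf:gf:exp:int} reduces to summability of the four families via elementary computations such as $\int_{-\infty}^{-1/\gamma} ds/(s^2+1) = \arctan \gamma$.

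For the converse, the plan is to reverse the recipe: given an admissible $\ph$, I reconstruct an integer $m$ together with nonnegative summable sequences $\gamma^\pm_j \in (0, 1]$ and $\delta^\pm_j \in (0, 1)$ so that $\ph$ coincides with the indicator decomposition displayed in the text. The crucial choice is the pivot $m := \lim_{s \to -1^-} \ph(s)$, which is a well-defined integer because $\ph$ is a step function with integer values; with this choice, $\ph - m \ge 0$ on $(-\infty, -1]$ and $m - \ph \ge 0$ on $[-1, 0)$. The jumps of $\ph - m$ on $(-\infty, -1)$ yield $\gamma^+_j \in (0, 1)$ via $-1/\gamma^+_j$, and the jumps of $m - \ph$ on $(-1, 0)$ yield $\gamma^-_j \in (0, 1)$ via $-\gamma^-_j$; a possible jump of $\ph$ at the single point $s = -1$ contributes terms with $\gamma^-_j = 1$, consistent with the pivot choice. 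The analogous decomposition on $(0, \infty)$ is direct and produces $\delta^\pm_j \in (0, 1)$, with the hypothesis that $\ph = 0$ near $1$ ensuring both sequences are bounded away from $1$. Summability of all four families follows from~\eqref{eq:pf:gf:exp:int} by the same elementary integrations as in the direct part.

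Substituting this decomposition back into~\eqref{eq:pf:gf:exp} and inverting each of the five integral identities produces a factorisation of $F$ of the form~\eqref{eq:pf:gf}, so by Theorem~8.9.5 in~\cite{karlin}, $F$ is the generating function of a unique summable two-sided Pólya frequency sequence; uniqueness of $a(k)$ follows because a summable sequence is determined by its Fourier coefficients on the unit circle, as recalled in Section~\ref{sec:gen}. The only subtle point is the pivot $m$: it guarantees that $\ph$ decomposes into the specific combination of nonnegative indicators on $(-\infty, -1]$ and nonpositive indicators on $[-1, 0)$ prescribed by the Pólya form, rather than into a more general signed sum that would not fit~\eqref{eq:pf:gf}; apart from this bookkeeping, the argument is a routine reversal of the direct calculation and requires no compactness or limiting input.
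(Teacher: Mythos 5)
Your proof is correct and follows essentially the same route as the paper: the direct half reproduces the pre-lemma computation starting from the Edrei--Karlin factorisation, and the converse reverses it. The paper leaves the converse at the remark that the reasoning ``can be clearly reversed,'' whereas you make the reversal precise via the pivot $m := \lim_{s \to -1^-} \ph(s)$, which correctly resolves the ambiguity between a $\gamma^+_j = 1$ factor $(1 + z)$ and a $z$-times-$\gamma^-_j = 1$ factor $z\,(1 + 1/z)$ and guarantees the jump data on each side of $-1$ produces admissible sequences $\gamma^\pm_j$; this is a useful elaboration of what the paper takes as self-evident.
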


For a similar discussion of the one-sided case, see Section~2.8 in~\cite{kw}. Pólya frequency functions in a similar context were discussed in Section~4 in~\cite{kwasnicki}. 


\subsection{Generating functions and inversion formulae}
\label{sec:post}

The classical Post's inversion formula tells us that if $G$ is the Laplace transform of $F$:
\formula{
 G(x) & = \int_0^\infty e^{-t x} F(t) dt ,
}
then, under suitable assumptions on $F$, we have
\formula{
 F(t) & = \lim_{n \to \infty} \frac{(-1)^n (n / t)^{n + 1}}{n!} \, G^{(n)}(n / t) .
}
This identity, applied to the Fourier transform $F$ of a bell-shaped function, played a crucial role in the analysis of bell-shaped functions in~\cite{ks}.

For one-sided bell-shaped sequences, the following discrete counterpart of Post's inversion formula was used in~\cite{kw}. If $G$ is the moment sequence of a function $F$:
\formula{
 G(k) & = \int_0^1 t^k F(t) dt ,
}
then, again under suitable assumptions on $F$, we have
\formula{
 F(t) & = \lim_{n \to \infty} \frac{(-1)^n (k_n + 1)\rf{n + 1}}{n!} \, \Delta^n G(k_n) \, ,
}
where $\Delta$ is the forward difference operator and
\formula{
 \lim_{n \to \infty} \frac{k_n}{n + k_n} & = t .
}
In~\cite{kw} this formula was applied to the generating function $F$ of a one-sided bell-shaped sequence.

In our case, the generating function $F$ of a two-sided bell-shaped sequence is only defined on the unit circle. Therefore, we need a different variant of Post's inversion formula. We were not able to find this result in the literature, so we provide a complete proof.

\begin{proposition}[yet another Post's inversion formula]
\label{prop:post}
Let $F$ be an integrable function on the unit circle in the complex plane, and let
\formula{
 G(x) & = \int_0^\infty e^{i t x} F(e^{i t}) dt
}
when $\im x > 0$. Then for every $t \in (0, \pi)$ such that $F$ is continuous at $e^{i t}$ we have
\formula[eq:post]{
 F(e^{i t}) & = \lim_{n \to \infty} \frac{(-1)^n (x_n)\rf{n + 1}}{i n!} \, \Delta^n G(x_n) ,
}
where $\Delta$ is the forward difference operator and
\formula{
 x_n & = \frac{n}{2} \biggl(i \cot \frac{t}{2} - 1\biggr) .
}
The same result holds true in the more general case when instead of integrability of $F$ we assume that $(e^{i t} - 1)^n F(e^{i t})$ is integrable over $(0, 2 \pi)$ for sufficiently large $n$, provided that in~\eqref{eq:post} we agree that
\formula{
 \Delta^n G(x) & = \int_0^\infty e^{i t x} (e^{i t} - 1)^n F(e^{i t}) dt
}
for $n$ large enough.
\end{proposition}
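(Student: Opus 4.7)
My plan is to reduce the inversion formula to a standard delta-family concentration argument for an explicit nonnegative kernel on $(0, 2\pi)$. The first step is to derive, by induction on $n$, the iterated-difference formula
\begin{equation*}
\Delta^n G(x) = \int_0^\infty e^{iux}(e^{iu}-1)^n F(e^{iu}) du,
\end{equation*}
the induction step following from $G(x+1) - G(x) = \int_0^\infty e^{iux}(e^{iu}-1) F(e^{iu}) du$; the manipulation is justified by the exponential decay $|e^{iux}| = e^{-u \im x}$ coming from $\im x > 0$. Splitting the half-line into intervals $(2\pi k, 2\pi(k+1))$ and summing the resulting geometric series (which converges since $|e^{2\pi i x}| = e^{-2\pi \im x} < 1$) collapses this to an integral over a single period,
\begin{equation*}
\Delta^n G(x) = \frac{1}{1 - e^{2\pi i x}} \int_0^{2\pi} e^{iux}(e^{iu}-1)^n F(e^{iu}) du.
\end{equation*}

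The second, and main, step is a complete cancellation of the oscillatory phase when $x = x_n$. Writing $e^{iu} - 1 = 2i\sin(u/2) e^{iu/2}$ and using the identity $x_n + n/2 = \tfrac{in}{2}\cot(t/2)$, I obtain
\begin{equation*}
e^{iux_n}(e^{iu}-1)^n = (2i)^n \sin^n(u/2) e^{-un\cot(t/2)/2} = i^n g(u)^n \quad \text{on } (0, 2\pi),
\end{equation*}
where $g(u) = 2\sin(u/2) e^{-u\cot(t/2)/2}$ is real and positive on $(0, 2\pi)$. Substituting this back shows that
\begin{equation*}
\frac{(-1)^n (x_n)\rf{n+1}}{in!} \Delta^n G(x_n) = A_n \int_0^{2\pi} g(u)^n F(e^{iu}) du
\end{equation*}
with some complex constant $A_n$ depending only on $n$ and $t$. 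To identify $A_n$, I apply the entire formula to the constant function $F \equiv 1$: a direct computation gives $G(x) = i/x$ and, by induction, $\Delta^n G(x) = (-1)^n i n!/(x)\rf{n+1}$, so the left-hand side equals $1$. This forces $A_n = 1/\int_0^{2\pi} g(u)^n du$, which is real and positive, so that $K_n(u) = A_n g(u)^n$ is a probability density on $(0, 2\pi)$.

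The final step is a standard delta-family argument. The function $g$ is continuous and nonnegative on $[0, 2\pi]$ with $g(0) = g(2\pi) = 0$, and since $(\log g)'(u) = \tfrac{1}{2}[\cot(u/2) - \cot(t/2)]$ vanishes only at $u = t$ while $(\log g)''(t) = -1/(4\sin^2(t/2)) < 0$, the function $g$ attains its unique global maximum on $[0, 2\pi]$ at $u = t$. An elementary estimate then gives $\sup_{|u - t| \ge \delta} K_n(u) \to 0$ as $n \to \infty$ for every $\delta > 0$, and combined with integrability of $F$ on the unit circle and continuity of $F$ at $e^{it}$, the usual splitting of the integral $\int_0^{2\pi} K_n(u) F(e^{iu}) du$ into the parts near $t$ and away from $t$ yields the required limit $F(e^{it})$. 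The extension to the case where only $(e^{iu}-1)^n F(e^{iu})$ is integrable for sufficiently large $n$ is immediate: one treats the displayed integral formula as the definition of $\Delta^n G(x_n)$ and repeats the argument from the second step. The crux of the proof is the phase cancellation in the second step, which is precisely what makes the specific choice of $x_n$ natural; the remaining steps are essentially routine.
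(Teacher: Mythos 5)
Your proof is correct, and its skeleton coincides with the paper's: establish the iterated-difference identity $\Delta^n G(x) = \int_0^\infty e^{iux}(e^{iu}-1)^n F(e^{iu})\,du$ by induction, observe the phase cancellation at $x = x_n$ via $e^{iux_n}(e^{iu}-1)^n = (2i)^n \bigl(e^{-cu}\sin(u/2)\bigr)^n$ with $c = \tfrac12\cot(t/2)$, normalize by evaluating the whole formula at $F\equiv 1$ (using $G(x)=i/x$ and $\Delta^n(1/x) = (-1)^n n!/x\rf{n+1}$), and conclude with a concentration argument at the unique maximizer $u=t$. The genuine difference is your collapse of $\int_0^\infty$ to $\frac{1}{1-e^{2\pi i x}}\int_0^{2\pi}$ via periodicity and the geometric series (legitimate since $|e^{2\pi i x}|<1$). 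This converts the kernel $e^{-cu}\sin(u/2)$ — which changes sign on successive periods $(2\pi k, 2\pi(k+1))$ — into the nonnegative function $g(u) = 2\sin(u/2)e^{-cu}$ on the compact interval $(0,2\pi)$, so that $K_n = g^n/\int_0^{2\pi} g^n$ becomes a bona fide probability density and the final step is the textbook approximate-identity argument. The paper instead keeps the integral over $(0,\infty)$ and deals with the oscillating tail by formulating its auxiliary Lemma~\ref{lem:approximation} in terms of $|\varphi|$ having a strict global maximum at $t$ with $\varphi(t)>0$. Your reduction to a single period is a mild but real simplification, and it also makes the integrability bookkeeping in the relaxed case, where only $(e^{iu}-1)^n F(e^{iu})$ is assumed integrable over $(0,2\pi)$ for large $n$, more transparent: the collapsed integral is exactly the integral of the function assumed integrable.
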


\begin{proof}
We divide the argument into four steps.

\emph{Step 1.} Suppose that $G$ is integrable over the unit circle. The $n$th iterated difference of $x \mapsto e^{i s x}$ is equal to $e^{i s x} (e^{i s} - 1)^n$. Thus,
\formula{
 \Delta^n G(x) & = \int_0^\infty e^{i s x} (e^{i s} - 1)^n F(e^{i s}) ds \\
 & = (2 i)^n \int_0^\infty e^{i s (x + n/2)} \biggl(\sin \frac{s}{2}\biggr)^n F(e^{i s}) ds
}
when $\im x > 0$. We fix $t \in (0, \pi)$ and we let $c = \frac{1}{2} \cot \frac{t}{2} \in (0, \infty)$, so that $x_n = -\frac{n}{2} + n i c$. The above formula evaluated at $x = x_n$ leads to
\formula*[eq:post:delta]{
 \Delta^n G(x_n) & = (2 i)^n \int_0^\infty \biggl(e^{-c s} \sin \frac{s}{2}\biggr)^n F(e^{i s}) ds .
}

\emph{Step 2.} Suppose for the moment that $F$ is constant $1$. Then $G(x) = i / x$, and hence, by induction,
\formula{
 \Delta^n G(x) & = \frac{(-1)^n i n!}{x\rf{n + 1}} \, .
}
On the other hand, $\Delta^n G(x)$ is given by~\eqref{eq:post:delta}. It follows that
\formula{
 (2 i)^n \int_0^\infty \biggl(e^{-c s} \sin \frac{s}{2}\biggr)^n ds & = \frac{(-1)^n i n!}{(x_n)\rf{n + 1}} \, .
}
In other words, if we write
\formula{
 M_n & = \int_0^\infty \biggl(e^{-c s} \sin \frac{s}{2}\biggr)^n ds ,
}
then
\formula{
 (2 i)^n M_n & = \frac{(-1)^n i n!}{(x_n)\rf{n + 1}} \, .
}
Turning back to an arbitrary integrable function $F$, and combining the above expression with~\eqref{eq:post:delta}, we find that
\formula{
 \frac{(-1)^n (x_n)\rf{n + 1}}{i n!} \, \Delta^n G(x_n) & = \frac{1}{M_n} \int_0^\infty \biggl(e^{-c s} \sin \frac{s}{2}\biggr)^n F(e^{i s}) ds .
}

\emph{Step 3.} The function $|e^{-c s} \sin \frac{s}{2}|$, defined for $s \in (0, \infty)$, has a strict global maximum at $s = 2 \arccot(2 c) = t$. Using a standard `approximation to the identity' argument, one can show that if $F$ is continuous at $e^{i t}$, then
\formula{
 \lim_{n \to \infty} \frac{1}{M_n} \int_0^\infty \biggl(e^{-c s} \sin \frac{s}{2}\biggr)^n F(e^{i s}) ds & = F(e^{i t}) ,
}
and the desired result follows. For completeness, we provide full details in Lemma~\ref{lem:approximation} in Appendix~\ref{app:a}.

\emph{Step 4.} If we only assume that $(z - 1)^n F(z)$ is integrable over the unit circle for $n$ large enough, then the first equality in Step~1 holds now by assumption when $n$ is large enough. Otherwise, the proof is exactly the same.
\end{proof}


\subsection{Completeness of a class of holomorphic functions}
\label{sec:compact}

Below we prove that the class of functions $F$ given by~\eqref{eq:main} is closed under pointwise limits over the semi-circle $|z| = 1$, $\im z > 0$. This is very similar to various analogous results for other classes of functions, but we failed to find the statement needed here in literature.

It is well-known that pointwise convergence of Pick functions on a sufficiently large subset of the upper complex half-plane is equivalent to locally uniform convergence, as well as to the vague convergence of the corresponding Stieltjes measures. A brief discussion and some references can be found in Section~2.6 in~\cite{kw}. In that paper this property of Pick functions easily led to a similar completeness result for the class of generating functions of one-sided bell-shaped sequences; see Step~10 of the proof of Theorem~3.1 in~\cite{kw}. However, unlike in the one-sided case, logarithms of generating functions of two-sided bell-shaped sequences are generally not Pick functions. Thus, a different argument is needed.

A similar completeness result for the class of Fourier transforms of two-sided bell-shaped functions is given in Lemma~3.1 in~\cite{ks}. To some extent, the proof given below for two-sided bell-shaped sequences is similar to the argument used in~\cite{ks}. Our case is, however, more complicated, as the representing measures $\sigma_n$ change sign twice, and one of this sign changes occurs at an undetermined position.

\begin{lemma}
\label{lem:compact}
Suppose that
\formula[eq:compact:gfn]{
 F_n(z) & = \exp\biggl(b_n^+ z + \frac{b_n^-}{z} + c_n + \int_{-\infty}^\infty \biggl(\frac{1}{s - z} - \frac{s}{s^2 + 1}\biggr) \ph_n(s) ds \biggr) ,
}
where $b_n^\pm, c_n, \ph_n$ satisfy the conditions listed in Theorem~\ref{thm:main}\ref{thm:main:c}, except possibly~\ref{thm:main:c5}. Suppose, furthermore, that $F_n(e^{i t})$ converges as $n \to \infty$ to a finite limit $F(e^{i t})$ for every $t \in (0, \pi)$. Then either $F(e^{i t}) = 0$ for every $t \in (0, \pi)$ or $F$ extends to a holomorphic function in the upper complex half-plane, given by
\formula[eq:compact:gf]{
 F(z) & = \exp\biggl(b^+ z + \frac{b^-}{z} + c + \int_{-\infty}^\infty \biggl(\frac{1}{s - z} - \frac{s}{s^2 + 1}\biggr) \ph(s) ds \biggr) ,
}
where $b^\pm, c, \ph$ satisfy the conditions listed in Theorem~\ref{thm:main}\ref{thm:main:c}, except possibly~\ref{thm:main:c5}. Furthermore, in the latter case, as $n \to \infty$, the sequence $c_n$ converges to $c$, and the signed measures
\formula{
 \sigma_n(ds) & = b_n^+ \delta_\infty(ds) - b_n^- \delta_0(ds) + \frac{\ph_n(s)}{s^2 + 1} \, ds
}
(see~\eqref{eq:main:alt}) converge vaguely on $\R \cup \{\infty\}$ to the corresponding signed measure
\formula{
 \sigma(ds) & = b^+ \delta_\infty(ds) - b^- \delta_0(ds) + \frac{\ph(s)}{s^2 + 1} \, ds .
}
\end{lemma}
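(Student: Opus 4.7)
The strategy is to mirror the compactness argument of Lemma~3.1 in~\cite{ks} for bell-shaped functions, while handling the two new complications flagged in the paragraph preceding the lemma: the signed measure $\sigma_n$ changes sign twice, and one of the sign changes occurs at an a priori unknown location on $(-\infty, 0)$.

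\emph{Decomposition.} I would first split each $F_n = G_n / H_n$ into a quotient of two exponentials of Pick functions. Writing $\ph_n = \ph_n^+ - \ph_n^-$ for the positive and negative parts of $\ph_n$, set
\formula{
 G_n(z) & = \exp\biggl(b_n^+ z + c_n + \int_{-\infty}^\infty \biggl(\frac{1}{s - z} - \frac{s}{s^2 + 1}\biggr) \ph_n^+(s) \, ds\biggr) , \\
 H_n(z) & = \exp\biggl(-\frac{b_n^-}{z} + \int_{-\infty}^\infty \biggl(\frac{1}{s - z} - \frac{s}{s^2 + 1}\biggr) \ph_n^-(s) \, ds\biggr) .
}
Since $b_n^+ z$ has nonnegative imaginary part in $\im z > 0$ and $-1/z$ is a Pick function with Stieltjes measure $\delta_0$, both $\log G_n$ and $\log H_n$ are Pick functions, and the Helly-type compactness recalled in Section~\ref{sec:pick} applies to each factor separately.

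\emph{A priori bounds, extraction, and identification.} The crux is to bound the Stieltjes measures of $\log G_n$ and $\log H_n$ — equivalently, the positive part $\sigma_n^+$ and the negative part $\sigma_n^-$ of $\sigma_n$ on $\R \cup \{\infty\}$ — locally uniformly in $n$. Assuming $F$ is not identically zero on the semicircle, the identity $c_n = \log|F_n(i)|$, which follows from the observation that the integrand in~\eqref{eq:compact:gfn} has zero real part at $z = i$, pins down $c_n \to \log|F(i)|$ provided $F(i) \ne 0$ (otherwise I would shift the reference point to another $e^{i t_0}$ where $F$ does not vanish). Combining the pointwise convergence of $F_n$ on the semicircle with the fixed sign of $\ph_n$ on $(0, 1)$ and $(1, \infty)$ and the integer-valued monotone structure on $(-\infty, 0)$, I would derive local uniform bounds on $\sigma_n^\pm$. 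Helly's selection on $\R \cup \{\infty\}$ then yields a subsequence with $b_n^\pm \to b^\pm$, $c_n \to c$, and $\sigma_n \to \sigma$ vaguely. Dominated convergence in the alternative form~\eqref{eq:main:alt} produces a holomorphic limit in the upper half plane of the required form~\eqref{eq:compact:gf}, which agrees with $F$ on the semicircle by boundary continuity.

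\emph{Verification and the main obstacle.} The sign conditions~\ref{thm:main:c3} and integrability~\ref{thm:main:c4} pass to the limit directly. For~\ref{thm:main:c1} and~\ref{thm:main:c2}, I would extract a further subsequence along which the stepwise decreasing (resp.\@ increasing-after-rounding) representatives of $\ph_n$ converge pointwise almost everywhere — this is feasible because on bounded intervals they are uniformly monotone and integer-valued. The most delicate step, and the source of the technical difficulties acknowledged by the authors, is the a priori bound on $\sigma_n^\pm$: pointwise convergence on the one-dimensional semicircle does not immediately control harmonic extensions into the upper half plane, and cancellations between $\ph_n^+$ and $\ph_n^-$ in the combined Poisson integral for $\log F_n$ could in principle hide unbounded growth of each part separately. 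The rigidity coming from integer-valuedness on $(-\infty, 0)$ and from the disjointness of the supports of $\ph_n^+$ and $\ph_n^-$ on $(0, 1) \cup (1, \infty)$ is what should rule out such pathologies, but translating this intuition into a quantitative bound is the long, technical heart of the argument.
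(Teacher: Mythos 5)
Your proposal correctly identifies the skeleton — a priori bounds, Helly extraction, identification via dominated convergence, uniqueness — but it stops short at precisely the point you call ``the long, technical heart of the argument,'' which is the a priori bound on the representing measures, and your proposed decomposition does not in fact lead to such a bound. Splitting $\ph_n = \ph_n^+ - \ph_n^-$ into positive and negative parts puts the sign-change location on $(-\infty,0)$ inside the integral limits, which is exactly the unstable datum one must first tame. Since $\ph_n$ is stepwise decreasing and integer-valued on $(-\infty,0)$, the place where it crosses zero can drift to $-\infty$ or to $0$ as $n$ varies, and pointwise convergence of $F_n$ on the semicircle gives you no control on $\int \ph_n^+/(s^2+1)\,ds$ and $\int \ph_n^-/(s^2+1)\,ds$ \emph{separately}: each can blow up even while the combined Poisson integral stays bounded, and the ``rigidity from integer-valuedness'' you invoke to exclude this is the claim, not an argument.

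The paper avoids this by normalizing first: it multiplies by $z^{k_n}$ with $k_n = -\ph_n(-1)$, passing to $\psi_n(s) = \ph_n(s) - \ph_n(-1)$ on $(-\infty,0)$. Because $\ph_n$ is stepwise decreasing there, $\psi_n \ge 0$ on $(-\infty,-1)$ and $\psi_n \le 0$ on $(-1,0)$, so the sign change of the shifted measure $\ro_n$ is pinned to $\pm 1$, matching exactly where the weight $(s^2-1)/(s^2+1)$ changes sign. This makes
\formula{
 M_n = \int_{\R \cup \{\infty\}} \frac{s^2-1}{s^2+1}\,\ro_n(ds)
}
a \emph{nonnegative} quantity with no internal cancellation. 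It then appears, up to two-sided constants, in $-\tfrac{d}{dt}\log|G_n(e^{it})|$, so boundedness of $M_n$ follows from the finite limit of $\log|F_n(e^{it_2})| - \log|F_n(e^{it_1})|$ (here is where $F \not\equiv 0$ enters). Finally — and this is the other step you would still need — the total variation $\|\ro_n\|$ is then bounded \emph{in terms of} $M_n$ by exploiting the stepwise/increasing-after-rounding structure to convert $M_n$-bounds into pointwise bounds on $|\psi_n|$ on $[\tfrac12,2]$ and $[-2,-\tfrac12]$, and then integrating against $(s^2-1)/(s^2+1)$ away from $\pm 1$. Without the $z^{k_n}$ shift there is no nonnegative functional of $\sigma_n$ to latch onto, so your plan of bounding $\sigma_n^\pm$ directly cannot be carried out as stated. (One smaller point: you also need to recover the integer $k_n$ in the limit; the paper does this by noting $e^{-ik_{n_j}t}$ must converge pointwise on $(0,\pi)$, forcing $k_{n_j} \to k$ finite.)
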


\begin{proof}
Assume that $F(e^{i t})$ is not identically equal to zero. We divide the argument into six steps.

\emph{Step 1.} We denote $k_n = -\ph_n(-1)$. Note that since $\ph_n$ only takes integer values on $(-\infty, 0)$, $k_n$ is an integer. We consider the function
\formula{
 G_n(z) & = z^{k_n} F_n(z) .
}
Clearly, $|G_n(e^{i t})| = |F_n(e^{i t})|$ for $t \in (0, \pi)$. When $\im z > 0$, we have
\formula[eq:compact:log]{
 \int_{-\infty}^0 \biggl(\frac{1}{s - z} - \frac{s}{s^2 + 1}\biggr) ds & = \log z ,
}
and therefore, by~\eqref{eq:compact:gfn},
\formula{
 G_n(z) & = \exp(k_n \log z) F_n(z) = \exp\biggl(b_n^+ z + \frac{b_n^-}{z} + c_n + \int_{-\infty}^\infty \biggl(\frac{1}{s - z} - \frac{s}{s^2 + 1}\biggr) \psi_n(s) ds \biggr) ,
}
where
\formula{
 \psi_n(s) & = \ph_n(s) + k_n \ind_{(-\infty, 0)}(s) .
}
Since $\ph_n$ is stepwise decreasing on $(-\infty, 0)$ and $\psi_n(s) = \ph_n(s) - \ph_n(-1)$ for $s < 0$, we have $\psi_n \ge 0$ on $(-\infty, -1) \cup (1, \infty)$ and $\psi_n \le 0$ on $(-1, 1)$. Define accordingly
\formula{
 \ro_n(ds) & = b_n^+ \delta_\infty(ds) - b_n^- \delta_0(ds) + \frac{\psi_n(s)}{s^2 + 1} \, ds \\
 & = \sigma_n(ds) + \frac{k_n}{s^2 + 1} \, \ind_{(-\infty, 0)}(s) ds .
}
Thus, $\ro_n$ is a finite signed measure, $\ro_n$ is nonnegative on $(\R \cup \{\infty\}) \setminus [-1, 1]$ and nonpositive on $[-1, 1]$, and $\ro_n(\{-1, 1\}) = 0$. The number 
\formula*[eq:compact:norm]{
 M_n & = \int_{\R \cup \{\infty\}} \frac{s^2 - 1}{s^2 + 1} \, \ro_n(ds) \\
 & = b_n^+ + b_n^- + \int_{-\infty}^\infty \frac{s^2 - 1}{(s^2 + 1)^2} \, \psi_n(s) ds ,
}
with the former integrand extended continuously at $s = \infty$, is therefore nonnegative and finite.

Rewriting the expression for $G_n$ as in~\eqref{eq:main:alt}, we obtain
\formula[eq:compact:shifted]{
 G_n(z) & = \exp\biggl(c_n + \int_{\R \cup \{\infty\}} \frac{1 + s z}{s - z} \, \ro_n(ds) \biggr) .
}
In the next step we study the properties of $|G_n(z)|$ when $z = e^{i t}$, $t \in (0, \pi)$: we prove a variant of uniform continuity of these functions.

\emph{Step 2.} For $t \in (0, \pi)$ the expression for $G_n(e^{i t})$ takes form
\formula{
 G_n(e^{i t}) & = \exp\biggl(c_n + \int_{\R \cup \{\infty\}} \frac{(s^2 - 1) \cos t + i (s^2 + 1) \sin t}{s^2 - 2 s \cos t + 1} \, \ro_n(ds) \biggr) ,
}
with the integrand extended continuously at $s = \infty$. In particular, by the dominated convergence theorem,
\formula{
 -\frac{d}{dt} \bigl(\log |G_n(e^{i t})|\bigr) & = \int_{\R \cup \{\infty\}} \frac{(s^2 - 1) (s^2 + 1) \sin t}{(s^2 - 2 s \cos t + 1)^2} \, \ro_n(ds) .
}
Observe that
\formula{
 (1 - |\cos t|) (s^2 + 1) & \le s^2 - 2 s \cos t + 1 \le 2 (s^2 + 1) ,
}
and hence
\formula[eq:compact:derivative]{
 \frac{\sin t}{4} \, M_n & \le -\frac{d}{dt} \bigl(\log |G_n(e^{i t})|\bigr) \le \frac{\sin t}{(1 - |\cos t|)^2} \, M_n
}
(see~\eqref{eq:compact:norm}). Finally, $|G_n(e^{i t})| = |F_n(e^{i t})|$, so that~\eqref{eq:compact:derivative} also holds with $G_n$ replaced by $F_n$.

\emph{Step 3.} We claim that $M_n$ is a bounded sequence. By assumption, $F(e^{i t_1}) \ne 0$ for some $t_1 \in (0, \pi)$. We choose an arbitrary $t_2 \in (0, t_1)$. By~\eqref{eq:compact:derivative} (for $F_n$ instead of $G_n$),
\formula{
 \log |F_n(e^{i t_2})| - \log |F_n(e^{i t_1})| & = -\int_{t_2}^{t_1} \frac{d}{dt} \bigl(\log |G_n(e^{i t})|\bigr) dt \\
 & \ge \int_{t_2}^{t_1} \frac{\sin t}{4} \, M_n dt = \frac{\cos t_2 - \cos t_1}{4} \, M_n .
}
As $n \to \infty$, the expression on the left-hand side has a limit $\log |F(e^{i t_2})| - \log |F(e^{i t_1})| \in [-\infty, \infty)$. Thus, the sequence $M_n$ is necessarily bounded. Our claim is proved.

Since $\psi_n$ is increasing-after-rounding on $(0, \infty)$, for $r \in [1, 2]$ we have
\formula{
 0 \le \psi_n(r) & \le \int_2^3 (1 + \psi_n(s)) ds \le 1 + \frac{25}{3} \int_2^3 \frac{s^2 - 1}{(s^2 + 1)^2} \, \psi_n(s) ds \le 1 + \frac{25 M_n}{3} \, .
}
Similarly, for $r \in [\tfrac{1}{2}, 1]$,
\formula{
 0 \le -\psi_n(r) & \le 2 \int_0^{1/2} (1 - \psi_n(s)) ds \le 1 + \frac{25}{12} \int_0^{1/2} \frac{s^2 - 1}{(s^2 + 1)^2} \, \psi_n(r) dr \le 1 + \frac{25 M_n}{12} \, .
}
It follows that $|\psi_n(r)| \le 1 + \tfrac{25}{3} M_n$ for $r \in [\tfrac{1}{2}, 2]$. Similarly, since $\psi_n$ is stepwise decreasing on $(-\infty, 0)$, the same argument shows that $|\psi_n(r)| \le \tfrac{25}{3} M_n$ for $r \in [-2, -\tfrac{1}{2}]$. Finally, for $s \in (\R \cup \{\infty\}) \setminus [-2, 2]$ we have
\formula{
 \frac{5}{3} \, \frac{s^2 - 1}{s^2 + 1} & \ge 1 ,
}
and similarly for $s \in (-\tfrac{1}{2}, \tfrac{1}{2})$, 
\formula{
 \frac{5}{3} \, \frac{s^2 - 1}{s^2 + 1} & \le -1 .
}
By combining the above estimates, we find that the total variation norm of $\ro_n$, denoted as $\|\ro_n\|$, satisfies
\formula{
 \|\ro_n\| & = \int_{\R \cup \{\infty\}) \setminus [-2, 2]} \ro_n(ds) + \int_{(-1/2, 1/2)} (-\ro_n)(ds) + \int_{-2}^{-1/2} \frac{|\psi_n(s)|}{1 + s^2} \, ds + \int_{1/2}^2 \frac{|\psi_n(s)|}{1 + s^2} \, ds \\
 & \le \frac{5}{3} \int_{\R \cup \{\infty\}) \setminus [-2, 2]} \frac{s^2 - 1}{s^2 + 1} \, \ro_n(ds) + \frac{5}{3} \int_{(-1/2, 1/2)} \frac{s^2 - 1}{s^2 + 1} \ro_n(ds) \\
 & \hspace*{3em} + \int_{-2}^{-1/2} \frac{1 + \tfrac{25}{3} M_n}{1 + s^2} \, ds + \int_{1/2}^2 \frac{1 + \tfrac{25}{3} M_n}{1 + s^2} \, ds \\
 & \le \tfrac{5}{3} M_n + 3 (1 + \tfrac{25}{3} M_n) = 3 + \tfrac{80}{3} M_n .
}
We conclude that $\|\ro_n\|$ is a bounded sequence.

\emph{Step 4.} We already know that the sequence $M_n$ is bounded, and that $F(e^{i t_1}) \ne 0$ for some $t_1 \in (0, \pi)$. Pick any $t_2 \in (0, \pi)$. By~\eqref{eq:compact:derivative} and the mean value theorem, the ratio
\formula{
 \frac{|F_n(e^{i t_1})|}{|F_n(e^{i t_2})|} & = \exp\bigl(\log |F_n(e^{i t_1})| - \log |F_n(e^{i t_2})| \bigr)
}
is bounded by a constant (which depends only on $t_1$, $t_2$ and the bound on $M_n$). Passing to the limit as $n \to \infty$, we find that $F(e^{i t_2}) \ne 0$.

In particular, $|F(i)| \ne 0$. However, by~\eqref{eq:compact:gfn},
\formula{
 |F(i)| & = \lim_{n \to \infty} |F_n(i)| = \lim_{n \to \infty} e^{c_n} .
}
Therefore, $c_n$ has a finite limit $c$.

\emph{Step 5.}
W have already proved that the sequence $c_n$ converges to some $c \in \R$, and that the finite signed measures $\ro_n$ on $\R \cup \{\infty\}$ have bounded total variation norms. In particular, each subsequence of $\ro_n$ has a vaguely convergent further subsequence.

Suppose that $\ro$ is a partial limit of $\ro_n$ in the sense of vague convergence of measures. Passing to the limit along the corresponding subsequence $n_j$ in the representation~\eqref{eq:compact:shifted} of $G_n(e^{i t})$, we find that
\formula{
 \lim_{j \to \infty} G_{n_j}(e^{i t}) & = \exp \biggl(c + \int_{\R \cup \{\infty\}} \frac{1 + s e^{i t}}{s - e^{i t}} \, \ro(ds)\biggr)
}
for $t \in (0, \pi)$. On the other hand,
\formula{
 \lim_{j \to \infty} \exp(-i k_{n_j} t) G_{n_j}(e^{i t}) & = \lim_{j \to \infty} F_{n_j}(e^{i t}) = F(e^{i t}) .
}
Therefore, $\exp(-i k_{n_j} t)$ converges pointwise for every $t \in (0, \pi)$. A standard argument shows that $k_{n_j}$ necessarily converges to a finite limit $k$, and we conclude that
\formula{
 F(e^{i t}) & = e^{-i k t} \lim_{j \to \infty} G_{n_j}(e^{i t}) \\
 & = \exp \biggl(-i k t + c + \int_{\R \cup \{\infty\}} \frac{1 + s e^{i t}}{s - e^{i t}} \, \ro(ds)\biggr)
}
for $t \in (0, \pi)$.

Clearly, $(1 + s^2) \ind_{\R}(s) \ro_{n_j}(ds)$ converges vaguely to $(1 + s^2) \ind_{\R}(s) \ro(ds)$ on $\R$ (but not necessarily on $\R \cup \{\infty\}$). Since $(1 + s^2) \ind_{\R}(s) \ro_{n_j}(ds)$ has a stepwise decreasing density function $\psi_n(s)$ on $(-\infty, 0)$, by Lemma~2.2 in~\cite{kw}, also the limiting measure $(1 + s^2) \ind_{\R}(s) \ro(ds)$ has a stepwise decreasing density function on $(-\infty, 0)$. The same argument shows that $(1 + s^2) \ind_{\R}(s) \ro(ds)$ has an increasing-after-rounding density function on $(0, \infty)$. We denote the density function of $(1 + s^2) \ind_{\R}(s) \ro(ds)$ on $\R \setminus \{0\}$ by $\psi$. Additionally, we set $b^+ = \ro(\{\infty\})$ and $b^- = -\ro(\{0\})$. Undoing the transformation that led to~\eqref{eq:main:alt} in Remark~\ref{rem:alt}, we find that our representation of $F(e^{i t})$ reads:
\formula{
 F(e^{i t}) & = \exp\biggl(-i k t + b^+ e^{i t} + \frac{b^-}{e^{i t}} + c + \int_{-\infty}^\infty \biggl(\frac{1}{s - e^{i t}} - \frac{s}{s^2 + 1}\biggr) \psi(s) ds \biggr)
}
for $t \in (0, \pi)$. If we write $\ph(s) = \psi(s) - k \ind_{(-\infty, 0)}(s) ds$, then, by~\eqref{eq:compact:log}, the above expression is equivalent to~\eqref{eq:compact:gf} with $z = e^{i t}$, and clearly the right-hand side of~\eqref{eq:compact:gf} defines a holomorphic function of $z$ in the upper complex half-plane $\im z > 0$. The first part of the lemma is thus proved.

\emph{Step 6.} Above we have shown that there is a number $c \in \R$ and a finite signed measure $\sigma$ on $\R \cup \{\infty\}$ such that
\formula[eq:compact:gf:alt]{
 F(z) & = \exp\biggl(c + \int_{\R \cup \{\infty\}} \frac{1 + s z}{s - z} \, \sigma(ds) \biggr)
}
when $|z| = 1$ and $\im z > 0$ (cf.~\eqref{eq:main:alt}). Here $c$ is the limit of $c_n$, and we have
\formula{
 \sigma(ds) & = \ro(ds) - \frac{k}{1 + s^2} \, \ind_{(-\infty, 0)}(s) ds ,
}
where $\ro$ is the vague limit of a subsequence of
\formula{
 \ro_n(ds) & = \sigma_n(ds) + \frac{k_n}{1 + s^2} \, \ind_{(-\infty, 0)}(ds) ,
}
and $k$ is the corresponding partial limit of $k_n$. It follows that $\sigma$ is the vague limit of the corresponding subsequence of $\sigma_n$. Additionally, since every subsequence of $\ro_n$ has a vaguely convergent further subsequence, the sequence $\sigma_n$ has the same property.

A standard argument shows that~\eqref{eq:compact:gf:alt} determines the pair $c, \sigma$ uniquely. Indeed: suppose that $\tilde c$ and $\tilde{\sigma}$ is another such pair. Then
\formula{
 1 & = \exp\biggl((c - \tilde{c}) + \int_{\R \cup \{\infty\}} \frac{1 + s z}{s - z} \, (\sigma - \tilde{\sigma})(ds) \biggr) ,
}
and hence, for some integer $m$, we have
\formula{
 (c - \tilde{c}) + \int_{\R \cup \{\infty\}} \frac{1 + s z}{s - z} \, (\sigma - \tilde{\sigma})(ds) & = 2 m \pi i
}
when $|z| = 1$ and $\im z > 0$. The left-hand side defines a holomorphic function in the upper complex half-plane, and this function is necessarily constant. By uniqueness of the Cauchy--Stieltjes transform (see, for example, Theorem~II.1 in~\cite{donoghue}), $\sigma - \tilde{\sigma}$ is necessarily a zero measure, and $c - \tilde{c} = 2 m \pi i$. However, $c - \tilde{c}$ is real, and we conclude that $c - \tilde{c} = 0$.

We have thus proved that every subsequence of $\sigma_n$ has a vaguely convergent further subsequence, and the vague limit of this subsequence is necessarily the measure $\sigma$ constructed above. This, however, means that $\sigma_n$ converges vaguely to $\sigma$, and the proof is complete.
\end{proof}

%
%

\section{Proof of the main result}
\label{sec:proof}

This section is devoted to the proof of Theorem~\ref{thm:main}. It is divided into three parts, which correspond to three implications in the theorem, respectively: \ref{thm:main:b}~$\Rightarrow$~\ref{thm:main:a}, \ref{thm:main:a}~$\Rightarrow$~\ref{thm:main:c}, and \ref{thm:main:c}~$\Rightarrow$~\ref{thm:main:b}. For clarify, below we state these implications as three separate theorems.


\subsection{Convolutions of $\amcm$ and Pólya frequency sequences and their generating functions}
\label{sec:easy}

We begin with the easy parts of Theorem~\ref{thm:main}. The following result covers implication \ref{thm:main:b}~$\Rightarrow$~\ref{thm:main:a} in Theorem~\ref{thm:main}.

\begin{theorem}
\label{thm:amcm:pf}
Suppose that $a(k)$ is the convolution of a summable two-sided Pólya frequency sequence $b(k)$ and an $\amcm$ sequence $c(k)$ which converges to zero as $k \to \pm \infty$. Then $a(k)$ is bell-shaped.
\end{theorem}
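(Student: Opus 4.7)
The plan is to verify the three defining properties of bell-shape for $a = b * c$: nonnegativity, decay at $\pm\infty$, and that $\Delta^n a$ changes sign exactly $n$ times for every $n \ge 0$. Nonnegativity of $a$ is automatic: $b \ge 0$ by assumption, $c \ge 0$ because by Hausdorff's theorem each of $c(k)$ (for $k \ge 0$) and $c(-k)$ (for $k \ge 0$) is a moment sequence of a nonnegative measure, and the convolution converges absolutely because $b$ is summable and $c$ is bounded. Decay $a(k)\to 0$ as $k\to \pm\infty$ follows by dominated convergence in the convolution sum, using $c(k-j)\to 0$ for each $j$ and the summable majorant $\|c\|_\infty\, b(j)$. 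The same argument applied to $\Delta^n c$, which is likewise bounded and decays to zero by the Hausdorff representations on each half-line, shows that $\Delta^n a = b * \Delta^n c$ also decays at $\pm\infty$; this will be needed to apply Rolle iteratively.

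The lower bound --- that $\Delta^n a$ changes sign at least $n$ times --- is obtained by induction via the discrete Rolle theorem recalled in Section~\ref{sec:gen}. The base $n = 0$ is trivial since $a \ge 0$, and the inductive step uses the decay of $\Delta^{n-1} a$ established above.

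The crucial point is the matching upper bound. The plan is to combine the identity $\Delta^n a = b * \Delta^n c$ (Section~\ref{sec:gen}) with the variation-diminishing property of summable Pólya frequency sequences: by the easy direct half of Theorem~5.1.5 in~\cite{karlin} (cited in Section~\ref{sec:pff}), convolution with $b$ cannot increase the number of sign changes. It therefore suffices to show that $\Delta^n c$ changes sign at most $n$ times. This is where the $\amcm$ structure enters. On the one hand, Hausdorff's theorem on the right half-line directly gives $(-1)^n \Delta^n c(k) \ge 0$ for $k \ge 0$. On the other hand, for the reflected sequence $\tilde c(j) = c(-j)$, which is completely monotone, a direct computation yields the identity $\Delta^n c(k) = (-1)^n \Delta^n \tilde c(-k-n)$, so that $\Delta^n c(k) \ge 0$ whenever $-k - n \ge 0$, that is, for $k \le -n$. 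Any strictly alternating subsequence of $\Delta^n c$ therefore uses at most one term with index $\le -n$, at most one with index $\ge 0$, and the remainder among the $n - 1$ integers in $\{-n + 1, \ldots, -1\}$; the total length is at most $n + 1$, corresponding to at most $n$ sign changes.

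The main obstacle I anticipate is not conceptually deep: it lies in making the sign-pattern argument for $\Delta^n c$ fully rigorous on each of the three regions, together with the bookkeeping of decay at $\pm\infty$ required to iterate the Rolle step. Once these are in place, the Rolle lower bound and the variation-diminishing upper bound match at $n$, giving exactly $n$ sign changes of $\Delta^n a$, and hence the bell-shape of $a$.
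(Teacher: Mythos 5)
Your argument is correct and follows essentially the same route as the paper's proof: the discrete Rolle theorem provides the lower bound of $n$ sign changes for $\Delta^n a$, while the upper bound comes from combining the identity $\Delta^n a = b * \Delta^n c$ with the variation-diminishing property of Pólya frequency sequences and the observation that the $\amcm$ structure forces $\Delta^n c$ to change sign at most $n$ times. Your sign-counting across the three index regions, as well as the reflection identity you use to re-derive $\Delta^n c(k) \ge 0$ for $k \le -n$, are just slightly more explicit versions of what the paper states directly from the definition of absolute monotonicity.
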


\begin{proof}
The convolution $a(k) = (b * c)(k)$ is nonnegative, not identically zero, and it converges to zero as $k \to \pm \infty$ by the dominated convergence theorem. Thus, our goal is to prove that the sequence $\Delta^n a(k)$ changes its sign exactly $n$ times for $n = 0, 1, 2, \ldots$

By the discrete Rolle's theorem and induction, the sequence $\Delta^n a(k)$ changes sign at least $n$ times. To prove the converse inequality, we observe that by the definition of an $\amcm$ sequence we have $(-1)^n \Delta^n c(k) \ge 0$ for $k \ge 0$ and $\Delta^n c(k) \ge 0$ for $n \le -k$. Hence, $\Delta^n c(k)$ changes sign at most $n$ times, at positions $\alpha_0 = -n + 1$, $\alpha_1 = -n + 2$, \ldots, $\alpha_{n - 2} = -1$ and $\alpha_{n - 1} \ge 0$. The variation diminishing property of summable Pólya frequency sequences implies that also the convolution of $b(k)$ and $\Delta^n c(k)$ changes sign no more than $n$ times. It remains to observe that $(b * \Delta^n c)(k) = \Delta^n(b * c)(k)$.
\end{proof}

We now turn to implication \ref{thm:main:c}~$\Rightarrow$~\ref{thm:main:b} in Theorem~\ref{thm:main}. Our result in fact proves equivalence of these conditions. First, however, we need an auxiliary lemma.

\begin{lemma}
\label{lem:lim}
Let $p, q \ge 0$. Suppose that
\formula{
 F(z) & = \exp\biggl(b^+ z + \frac{b^-}{z} + c + \int_{-\infty}^\infty \biggl(\frac{1}{s - z} - \frac{s}{s^2 + 1}\biggr) \ph(s) ds \biggr)
}
when $|z| = 1$, $z \ne 1$, where $b^+, b^- \in [0, \infty)$, $c \in \R$ and $\ph$ is a Borel function on $\R$ such that $\ph(s) \le -p$ for $s \in (0, 1)$, $\ph(s) \ge q$ for $s \in (1, \infty)$, and $\int_{-\infty}^\infty |\ph(s)| / (s^2 + 1) ds < \infty$. If
\formula{
 \lim_{t \to 0^+} (e^{i t} - 1) F(e^{i t}) & = 0 ,
}
then $p + q < 1$.
\end{lemma}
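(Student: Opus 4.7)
The plan is to factor $F$ so as to isolate the singular behaviour at $z = 1$. Define the auxiliary density
\formula{
 \ph_1(s) & = -p \ind_{(0, 1)}(s) + q \ind_{(1, \infty)}(s)
}
and set $\ph_2 = \ph - \ph_1$; the hypotheses on $\ph$ yield $\ph_2 \le 0$ on $(0, 1)$, $\ph_2 \ge 0$ on $(1, \infty)$, and $\ph_2 = \ph$ on $(-\infty, 0)$. Accordingly write $F = F_1 F_2 H$, where $H(z) = \exp(b^+ z + b^- / z + c)$ and $F_j(z)$ denotes the exponential integral in the statement with $\ph_j$ in place of $\ph$; then $|H(e^{i t})|$ is bounded above and below as $t \to 0^+$. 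A direct evaluation of the integrals (identical to the one performed in the derivation of Lemma~\ref{lem:pf}) gives
\formula{
 F_1(z) & = 2^{(p + q)/2} (-z)^p (1 - z)^{-(p + q)} ,
}
and hence $|F_1(e^{i t})| = 2^{(p + q)/2} (2 \sin(t / 2))^{-(p + q)}$ is comparable to $t^{-(p + q)}$ as $t \to 0^+$.

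The heart of the argument is to show a uniform lower bound $|F_2(e^{i t})| \ge c > 0$ for sufficiently small $t > 0$. Taking real parts,
\formula{
 \log |F_2(e^{i t})| & = \int_{-\infty}^\infty K(s, t) \ph_2(s) \, ds , \qquad K(s, t) = \frac{(s^2 - 1) \cos t}{(s^2 - 2 s \cos t + 1)(s^2 + 1)} .
}
For $t \in (0, \pi / 2)$ the kernel $K(\cdot, t)$ has the sign of $s^2 - 1$ on $(0, \infty)$, which is precisely matched to the sign of $\ph_2$: the product $K(s, t) \ph_2(s)$ is non-negative on $(0, 1) \cup (1, \infty)$, and hence $\int_0^\infty K(s, t) \ph_2(s) \, ds \ge 0$. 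For $s < 0$ and $t \in (0, \pi / 2)$ the elementary bound $s^2 - 2 s \cos t + 1 \ge s^2 + 1$ together with $|s^2 - 1| \le s^2 + 1$ yields $|K(s, t)| \le 1 / (s^2 + 1)$. The dominated convergence theorem, applied with integrable dominant $|\ph(s)| / (s^2 + 1)$, then shows that $\int_{-\infty}^0 K(s, t) \ph(s) \, ds$ tends to a finite limit, and in particular is bounded below, as $t \to 0^+$. Combining these two estimates, $\log |F_2(e^{i t})| \ge -C$ for all sufficiently small $t > 0$.

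Putting the three pieces together gives
\formula{
 |(e^{i t} - 1) F(e^{i t})| & \gtrsim |1 - e^{i t}| \cdot t^{-(p + q)} \asymp t^{1 - (p + q)}
}
as $t \to 0^+$. The limit hypothesis therefore forces $1 - (p + q) > 0$, that is, $p + q < 1$. The main subtlety is the lower bound for $|F_2(e^{i t})|$: the kernel $K(s, t)$ changes sign exactly at $s = \pm 1$, matching the sign conditions imposed on $\ph_2$ at $s = 1$, so the contributions from near $s = 1$, which a priori could drive $|F_2(e^{i t})|$ to zero, are automatically of the favourable sign; on $(-\infty, 0)$ the mere integrability of $\ph$ with respect to $ds / (s^2 + 1)$ suffices.
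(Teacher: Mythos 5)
Your proof is correct and follows essentially the same route as the paper's: both arguments compare $\ph$ on $(0,\infty)$ with the step function $-p\ind_{(0,1)}+q\ind_{(1,\infty)}$ (your $\ph_1$), observe that the real part of the Cauchy kernel has the same sign as $s^2-1$ so the residual $\ph_2=\ph-\ph_1$ contributes nonnegatively, extract the explicit factor $\asymp |1-e^{it}|^{-(p+q)}$, and handle $(-\infty,0)$ by dominated convergence. Your presentation via the factorisation $F=F_1F_2H$ simply makes explicit what the paper's chain of inequalities does in place.
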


\begin{proof}
When $t \in (0, \tfrac{\pi}{2})$, we have
\formula{
 |F(e^{i t})| & = \exp\biggl((b^+ + b^-) \cos t + c + \int_{-\infty}^\infty \re \biggl(\frac{1}{s - e^{i t}} - \frac{s}{s^2 + 1}\biggr) \ph(s) ds \biggr) .
}
Since
\formula{
 \re \biggl(\frac{1}{s - e^{i t}} - \frac{s}{s^2 + 1}\biggr) & = \frac{(s^2 - 1) \cos t}{(s^2 + 1) (s^2 - 2 s \cos t + 1)}
}
has the same sign as $\ph(s)$ for $s \in (0, \infty)$, we have
\formula{
 |F(e^{i t})| & \ge \exp\biggl((b^+ + b^-) \cos t + c + \int_{-\infty}^0 \re \biggl(\frac{1}{s - e^{i t}} - \frac{s}{s^2 + 1}\biggr) \ph(s) ds \\
 & \hspace*{3em} - p \int_0^1 \re \biggl(\frac{1}{s - e^{i t}} - \frac{s}{s^2 + 1}\biggr) ds \\
 & \hspace*{3em} + q \int_1^\infty \re \biggl(\frac{1}{s - e^{i t}} - \frac{s}{s^2 + 1}\biggr) ds\biggr) .
}
By a straightforward calculation, we obtain
\formula{
 |F(e^{i t})| & \ge 2^{p/2 + q/2} |e^{i t} - 1|^{-p - q} \times \\
 & \hspace*{3em} \times \exp\biggl((b^+ + b^-) \cos t + c + \int_{-\infty}^0 \re \biggl(\frac{1}{s - e^{i t}} - \frac{s}{s^2 + 1}\biggr) \ph(s) ds \biggr) .
}
By the dominated convergence theorem, as $t \to 0^+$, the exponent in brackets has a finite limit
\formula{
 b^+ + b^- + c + \int_{-\infty}^0 \re \biggl(\frac{1}{s - 1} - \frac{s}{s^2 + 1}\biggr) \ph(s) ds .
}
Therefore, if $(e^{i t} - 1) F(e^{i t})$ converges to zero as $t \to 0^+$, then $|e^{i t} - 1|^{1 - p - q}$ also converges to zero, and thus $p + q < 1$.
\end{proof}

\begin{theorem}
\label{thm:gf}
Suppose that $a(k)$ is the convolution of a summable two-sided Pólya frequency sequence $b(k)$ and an $\amcm$ sequence $c(k)$ which converges to zero as $k \to \pm \infty$. Then the generating function $F$ of $a(k)$ is equal to
\formula[eq:gf]{
 F(z) & = \sum_{k = -\infty}^\infty a(k) z^k = \exp\biggl(b^+ z + \frac{b^-}{z} + c + \int_{-\infty}^\infty \biggl(\frac{1}{s - z} - \frac{s}{s^2 + 1}\biggr) \ph(s) ds \biggr)
}
when $|z| = 1$, $z \ne 1$, where $b^+, b^- \in [0, \infty)$, $c \in \R$ and $\ph$ is a Borel function on $\R$ such that
\begin{enumerate}[label={\textup{(\roman*)}},leftmargin=2.5em]
\item\label{thm:gf:1} $\ph$ is stepwise decreasing on $(-\infty, 0)$;
\item\label{thm:gf:2} $\ph$ is increasing-after-rounding on $(0, \infty)$;
\item\label{thm:gf:3} $\ph \le 0$ on $(0, 1)$ and $\ph \ge 0$ on $(1, \infty)$;
\item\label{thm:gf:4} $\int_{-\infty}^\infty |\ph(s)| / (s^2 + 1) ds < \infty$;
\item\label{thm:gf:5} $(e^{i t} - 1) F(e^{i t})$ converges to $0$ as $t \to 0$.
\end{enumerate}
Conversely, if $b^+, b^-, c, \ph$ and the function $F$ defined by~\eqref{eq:gf} satisfy the above conditions, then $F$ is the generating function of the convolution $a(k)$ of a summable two-sided Pólya frequency sequence $b(k)$ and an $\amcm$ sequence $c(k)$ which converges to zero as $k \to \pm \infty$.
\end{theorem}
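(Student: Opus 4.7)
I would first invoke Lemma~\ref{lem:pf} to write the generating function $G$ of $b(k)$ in the exponential form with parameters $b^+, b^-, d_1$ and a function $\ph_1$ satisfying the properties stated there, and invoke Lemma~\ref{lem:amcm} to write the generating function $H$ of $c(k)$ in the form~\eqref{eq:amcm:gf:exp} with parameters $d_2, \ph_2$ and the limit condition $(e^{it} - 1) H(e^{it}) \to 0$. Since $b(k)$ is summable and $c(k)$ converges eventually monotonically to zero, by the discussion in Section~\ref{sec:gen} the convolution $a = b * c$ is well-defined and its generating function coincides with $F = G \cdot H$ on $|z| = 1$, $z \ne 1$. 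Hence $F$ has the representation~\eqref{eq:gf} with $\ph = \ph_1 + \ph_2$ and constant $d_1 + d_2$. Properties~\ref{thm:gf:1}, \ref{thm:gf:3}, and~\ref{thm:gf:4} are immediate from those of $\ph_1$ and $\ph_2$. For~\ref{thm:gf:2}, the function $\tilde\ph$ equal to $\ph_1 - 1$ on $(0, 1)$ and $\ph_1$ on $(1, \infty)$ is stepwise increasing on $(0, \infty)$ (since $\ph_1 = 0$ in a neighbourhood of $1$, $\tilde\ph$ simply jumps from $-1$ to $0$ at $s = 1$) and satisfies $\tilde\ph \le \ph \le \tilde\ph + 1$. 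Condition~\ref{thm:gf:5} follows from continuity of $G$ at $z = 1$ (by summability of $b(k)$) together with the limit property for $H$.

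\textbf{Converse part.} For the converse direction, the plan is to decompose $\ph = \ph_1 + \ph_2$ so that $\ph_1$ fulfils the hypotheses of Lemma~\ref{lem:pf} and $\ph_2$ those of Lemma~\ref{lem:amcm}. Let $\tilde\ph_0$ be a stepwise-increasing witness for~\ref{thm:gf:2}. Define
\[
 \tilde\ph(s) = \min(\tilde\ph_0(s), -1) \text{ on } (0, 1), \qquad \tilde\ph(s) = \max(\tilde\ph_0(s), 0) \text{ on } (1, \infty),
\]
and set $\ph_1 = \tilde\ph + 1$ on $(0, 1)$, $\ph_1 = \tilde\ph$ on $(1, \infty)$, $\ph_1 = \ph$ on $(-\infty, 0)$, and $\ph_2 = \ph - \ph_1$. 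Provided $\tilde\ph_0(1^-) \in \{-1, 0\}$ and $\tilde\ph_0(1^+) \in \{-1, 0\}$, a routine case check shows that $\tilde\ph$ is stepwise increasing on $(0, \infty)$ with $\tilde\ph(1^-) = -1$ and $\tilde\ph(1^+) = 0$, that $\tilde\ph \le \ph \le \tilde\ph + 1$, and consequently that $\ph_1$ fulfils the hypotheses of Lemma~\ref{lem:pf} and $\ph_2$ has the sign pattern required by Lemma~\ref{lem:amcm}. Applying these lemmas with the constants split as $d_1 = c$, $d_2 = 0$ produces a summable Pólya frequency sequence $b(k)$ and an $\amcm$ sequence $c(k)$ converging to zero, with generating functions $G$ and $H$ such that $F = G \cdot H$ on $|z| = 1$, $z \ne 1$. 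The limit condition $(e^{it} - 1) H(e^{it}) \to 0$ required in Lemma~\ref{lem:amcm} follows by dividing~\ref{thm:gf:5} by $G(e^{it})$, noting that $G$ is continuous at $z = 1$ and $G(1) = \sum_k b(k) > 0$. Finally, $F$ is the generating function of $a = b * c$ by Section~\ref{sec:gen}.

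\textbf{Main obstacle.} The crucial step is to verify $\tilde\ph_0(1^\pm) \in \{-1, 0\}$; this is where condition~\ref{thm:gf:5} enters essentially. Suppose for contradiction $\tilde\ph_0(1^-) \le -2$. Since $\tilde\ph_0$ is nondecreasing and integer-valued, $\tilde\ph_0 \le -2$ on some interval $(1 - \delta, 1)$, and hence $\ph \le \tilde\ph_0 + 1 \le -1$ there. Localising the computation from the proof of Lemma~\ref{lem:lim} to $(1 - \delta, 1)$, one shows that
\[
 \int_{1 - \delta}^1 \re\biggl(\frac{1}{s - e^{it}} - \frac{s}{s^2 + 1}\biggr) \ph(s) ds \ge -\log|e^{it} - 1| + O(1)
\]
as $t \to 0^+$, while the integrals over $(-\infty, 1 - \delta) \cup (1 + \delta, \infty)$ stay bounded. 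Hence $|F(e^{it})| \ge \mathrm{const} \cdot |e^{it} - 1|^{-1}$, contradicting~\ref{thm:gf:5}. An analogous argument on $(1, 1 + \delta)$, using the positivity of the real part of the integrand on $(1, \infty)$, rules out $\tilde\ph_0(1^+) \ge 1$. Combined with the automatic bounds $\tilde\ph_0(1^-) \le 0$ and $\tilde\ph_0(1^+) \ge -1$ (coming from~\ref{thm:gf:3}), these yield the desired restriction $\tilde\ph_0(1^\pm) \in \{-1, 0\}$.
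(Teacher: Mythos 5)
Your proof is correct and follows essentially the same route as the paper: the direct part multiplies the two exponential representations from Lemma~\ref{lem:pf} and Lemma~\ref{lem:amcm}, and the converse part decomposes $\ph = \ph_1 + \ph_2$ exactly as the paper does (with $\tilde\ph = \min\{\tilde\ph_0, -1\}$ on $(0,1)$, $\max\{\tilde\ph_0, 0\}$ on $(1,\infty)$, $\ph_1 = \tilde\ph + \ind_{(0,1)}$, $\ph_2 = \ph - \ph_1$) before applying the two lemmas. The only place you genuinely deviate is in the ``Main obstacle'' step. There you hand-craft a localized version of the estimate from the proof of Lemma~\ref{lem:lim}, whereas this is not needed: since the witness $\tilde\ph_0$ is nondecreasing, $\tilde\ph_0(1^-) \le -2$ forces $\tilde\ph_0 \le -2$, and hence $\ph \le \tilde\ph_0 + 1 \le -1$, on the \emph{whole} interval $(0,1)$, not merely on $(1-\delta, 1)$. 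Then Lemma~\ref{lem:lim} applies directly with $p = 1$, $q = 0$, giving the contradiction $1 < 1$ without any localization. Symmetrically, $\tilde\ph_0(1^+) \ge 1$ propagates by monotonicity to $\ph \ge 1$ on all of $(1,\infty)$, and Lemma~\ref{lem:lim} with $p = 0$, $q = 1$ finishes. Your localized computation is correct, but it amounts to re-proving a special case of a lemma that, combined with the monotonicity observation, already does the whole job; the paper takes the shorter path.
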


\begin{proof}
By Lemma~\ref{lem:amcm}, the generating function $H$ of an $\amcm$ sequence $c(k)$ which converges to zero as $k \to \pm \infty$ has the exponential representation
\formula{
 H(z) & = \exp\biggl(c_2 + \int_0^\infty \biggl(\frac{1}{s - z} - \frac{s}{s^2 + 1}\biggr) \ph_2(s) ds\biggr)
}
for a constant $c_2 \in \R$ and a Borel function $\ph_2$ on $\R$ which is equal to zero on $(-\infty, 0)$, takes values in $[-1, 0]$ on $(0, 1)$, and takes values in $[0, 1]$ on $(1, \infty)$. Similarly, by Lemma~\ref{lem:pf}, the generating function $G$ of a summable two-sided Pólya frequency sequence $b(k)$ is given by
\formula{
 G(z) & = \exp\biggl(b^+ z + \frac{b^-}{z} + c_1 + \int_{-\infty}^\infty \biggl(\frac{1}{s - z} - \frac{s}{s^2 + 1}\biggr) \ph_1(s) \, ds\biggr)
}
for some constants $b^+, b^- \ge 0$ and $c_1 \in \R$, and a function $\ph_1$ on $\R$ which is stepwise decreasing on $(-\infty, 0)$, is stepwise increasing on $(0, \infty)$, and satisfies $\ph(s) = 0$ in a neighbourhood of $1$. The generating function of the convolution $a(k) = (b * c)(k)$ is a function $F$ satisfying $F(z) = G(z) H(z)$ when $|z| = 1$ and $z \ne 1$. We conclude that $F$ is given by~\eqref{eq:gf}, with $c = c_1 + c_2 \in \R$ and $\ph(s) = \ph_1(s) + \ph_2(s)$. It remains to note that $\ph = \ph_1$ is stepwise decreasing on $(-\infty, 0)$, $\ph = \ph_1 + \ph_2$ is increasing-after-rounding on $(0, \infty)$, and $\ph$ is clearly nonpositive on $(0, 1)$ and nonnegative on $(1, \infty)$. Additionally, both $\ph_1(s) / (s^2 + 1)$ and $\ph_2(s) / (s^2 + 1)$ are integrable (the former by Lemma~\ref{lem:pf}, the latter because $\ph_2$ is bounded), and therefore $|\ph(s)| / (s^2 + 1)$ is integrable over $\R$. Finally, $G$ is continuous on the unit circle in the complex plane and $(e^{i t} - 1) H(e^{i t})$ converges to $0$ as $t \to 0$, so clearly the limit of $(e^{i t} - 1) F(e^{i t})$ as $t \to 0$ is zero.

In order to prove the converse part of the theorem, we first show that every function $\ph$ with the properties listed in the statement of the theorem can be written as a sum $\ph = \ph_1 + \ph_2$, where $\ph_1$ and $\ph_2$ have the properties discussed in the proof of the direct part of the theorem.

For $s < 0$, we simply define $\ph_1(s) = \ph(s)$ and $\ph_2(s) = 0$. For $s > 0$ the definition is slightly more complicated. By definition, there is a stepwise increasing function $\tilde{\ph}$ on $(0, \infty)$ such that $\tilde{\ph}(s) \le \ph(s) \le \tilde{\ph}(s) + 1$ for $s > 0$. Since $\ph(s) \le 0$ for $s \in (0, 1)$ and $\ph(s) \ge 0$ for $s \in (1, \infty)$, with no loss of generality we may assume that $\tilde{\ph}(s) \le -1$ for $s \in (0, 1)$ and $\tilde{\ph}(s) \ge 0$ for $s \in [1, \infty)$ (otherwise we replace $\tilde{\ph}$ by $\min\{\tilde{\ph}, -1\}$ on $(0, 1)$ and by $\max\{\tilde{\ph}, 0\}$ on $[1, \infty)$). We define $\ph_1(s) = \tilde{\ph}(s) + \ind_{(0, 1)}(s)$ and $\ph_2(s) = \ph(s) - \ph_1(s)$. Clearly, $\ph_1$ is stepwise increasing on $(0, \infty)$, $\ph_2(s) \in [-1, 0]$ for $s \in (0, 1)$, and $\ph_2(s) \in [0, 1]$ for $s \in (1, \infty)$. It remains to show that $\ph_1(s) = 0$ in a neighbourhood of $1$. Indeed: if $\ph_1(s) \ge 1$ in some right neighbourhood of $1$, then $\ph(s) \ge 1$ for $s \in (1, \infty)$, which would contradict Lemma~\ref{lem:lim}. Similarly, if $\ph_1(s) \le -1$ in some left neighbourhood of $1$, then $\ph(s) \le -1$ for $s \in (0, 1)$, and again we would arrive at a contradiction with Lemma~\ref{lem:lim}.

We return to the proof of the converse part of the theorem. Suppose that $F$ is given by~\eqref{eq:gf}, define $\ph_1$ and $\ph_2$ as described above, and let $G$ and $H$ be defined as in the proof of the direct part (with, say, $c_1 = 0$ and $c_2 = c$). We observe that since $\ph_1(s) = 0$ in a neighbourhood of $1$, $G$ is continuous on the unit circle in the complex plane and $G(1) \ne 0$. Hence, $(e^{i t} - 1) H(e^{i t}) = (e^{i t} - 1) F(e^{i t}) / G(e^{i t})$ converges to zero as $t \to 0$. Therefore, by Lemma~\ref{lem:amcm}, $H$ is the generating function of an $\amcm$ sequence $c(k)$ which converges to zero as $k \to \pm\infty$. Furthermore, since $\ph_2$ is bounded, $\ph_1(s) / (s^2 + 1)$ is integrable over $\R$, and so Lemma~\ref{lem:pf} implies that $G$ is the generating function of a summable Pólya frequency sequence $b(k)$. The convolution $a(k)$ of $b(k)$ and $c(k)$ has therefore generating function equal to $F(z)$ when $|z| = 1$, $z \ne 1$, and the proof is complete.
\end{proof}


\subsection{Generating functions of bell-shaped sequences}
\label{sec:hard}

We now prove the difficult part of Theorem~\ref{thm:main}: implication \ref{thm:main:a}~$\Rightarrow$~\ref{thm:main:c}. Our argument follows the idea of the proof in the one-sided case in~\cite{kw}, but the the generating function $F$ is only defined on the unit circle, and so a different transform and a nonstandard inversion formula need to be employed. Note that despite these additional difficulties, our new approach also brings some simplifications: we no longer need Step~8 from the proof of Theorem~3.1 in~\cite{kw}.

\begin{theorem}
\label{thm:bell}
Suppose that $a(k)$ is a two-sided bell-shaped sequence. Then the generating function $F$ of $a(k)$ is given by~\eqref{eq:gf}, with $b^+, b^-, c, \ph$ satisfying conditions~\ref{thm:gf:1} through~\ref{thm:gf:5} listed in Theorem~\ref{thm:gf}.
\end{theorem}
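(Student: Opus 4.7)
The plan is to adapt the approach of Theorem~3.1 in~\cite{kw} to the two-sided case, with two key replacements: the upper half-plane inversion formula (Proposition~\ref{prop:post}) in place of the discrete Post formula for moment sequences, and the compactness result Lemma~\ref{lem:compact} in place of the standard compactness of Pick functions.

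First, I would use bell-shape with $n=1$: $\Delta a$ changes sign exactly once, so $a$ is eventually monotone at $\pm\infty$; combined with $a(k)\to 0$, this makes $\Delta a$ summable with zero sum. Consequently $F(z)$ converges on $|z|=1$, $z\ne 1$, and $(z-1)F(z)$ extends continuously to the unit circle with value $0$ at $z=1$. This yields condition~\ref{thm:gf:5} and also shows that $F$ determines $a$ on the punctured circle. Next, I would approximate $a$ by bell-shaped sequences $a^{(m)}$ with better decay by convolving $a$ with carefully chosen symmetric finitely supported Pólya frequency sequences $b^{(m)}\to\delta_0$ and, if needed, additionally symmetrically damping. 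Each $a^{(m)}$ is again bell-shaped by the combination of the discrete Rolle's theorem (at least $n$ sign changes of $\Delta^n a^{(m)}$) and the variation-diminishing property of Pólya frequency sequences (at most $n$ sign changes), and its generating function $F^{(m)}$ extends continuously to the closed unit disc and holomorphically to $\im z > 0$.

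The central step is then to establish the representation~\eqref{eq:gf} for each $F^{(m)}$ with $\ph^{(m)}$ satisfying~\ref{thm:gf:1}--\ref{thm:gf:4}. For this, I would study the Fourier--Laplace transform $H^{(m)}(x) = \int_0^{2\pi} e^{itx} F^{(m)}(e^{it})\, dt$ for $\im x > 0$ and apply Proposition~\ref{prop:post}: the bell-shape of $\Delta^n a^{(m)}$, together with summation by parts, controls the number of sign changes of the iterated differences $\Delta^n H^{(m)}$ evaluated along the curve $x_n(t) = \tfrac{n}{2}(i\cot\tfrac{t}{2} - 1)$. Following the scheme sketched in Remark~\ref{rem:ex}, with $\pi\ph^{(m)}(s) = \lim_{t\to 0^+}\Arg F^{(m)}(s+it)$, these bounds translate into the required structural properties of $\ph^{(m)}$: stepwise-decreasing on $(-\infty,0)$, increasing-after-rounding on $(0,\infty)$, with the prescribed signs on $(0,1)$ and $(1,\infty)$; integrability~\ref{thm:gf:4} follows from the continuity and non-vanishing of $F^{(m)}$ on the upper semicircle, and Lemma~\ref{lem:lim} precludes unbounded behaviour of $\ph^{(m)}$ near $s = 1$. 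Passing to the limit $m\to\infty$ via Lemma~\ref{lem:compact} then transfers the representation~\eqref{eq:gf} to $F$ itself, with $\ph$ inheriting~\ref{thm:gf:1}--\ref{thm:gf:3} through vague convergence of the associated signed measures.

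The main obstacle is the sign-change accounting for $\ph^{(m)}$ from the bell-shape of $\Delta^n a^{(m)}$. In the one-sided setting of~\cite{kw} this was comparatively clean because $\log F$ is a Pick function, so $\ph \in [0,1]$ and sign changes of $\Delta^n a$ correspond exactly to integer-level crossings of $\ph$. Here $\log F^{(m)}$ is only a difference of Pick functions, $\ph^{(m)}$ can change sign on $(0,\infty)$ at an unprescribed point near $s=1$, and the sign-change budget must be distributed between $(-\infty,0)$ (where it forces stepwise-decreasing behaviour) and $(0,\infty)$ (where it forces increasing-after-rounding behaviour). Executing this combinatorial accounting through the new inversion formula of Proposition~\ref{prop:post}, rather than through direct moment-inversion, is expected to be the most technically demanding part of the proof.
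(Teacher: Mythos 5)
You have correctly identified the two new pieces of machinery (Proposition~\ref{prop:post} and Lemma~\ref{lem:compact}) and the boundary condition~\ref{thm:gf:5} via summability of $\Delta a$. However, the heart of the proof — how the structural properties~\ref{thm:gf:1}--\ref{thm:gf:4} of $\ph$ are actually extracted from bell-shape — is missing, and the step you substitute for it would not work.

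The paper does not approximate $a$ by better-behaved bell-shaped sequences $a^{(m)}$. Instead, the inversion formula from Proposition~\ref{prop:post} is applied directly to $F$ (using the extended version, valid whenever $(z-1)^n F(z)$ is integrable, which holds because $\Delta a$ is summable); the approximating family $F_n$ that feeds into Lemma~\ref{lem:compact} comes out of the inversion limit itself, not from a preliminary mollification. Your proposed approximation by convolution with finitely supported Pólya frequency kernels $b^{(m)} \to \delta_0$ would not improve the decay of $a$ (a finite convolution preserves the decay rate), so it neither achieves a reduction nor matches the paper's structure; and what you actually need afterwards — the representation~\eqref{eq:gf} for $F^{(m)}$ — is exactly the statement of Theorem~\ref{thm:bell} for $a^{(m)}$, so nothing is gained.

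The genuine gap is the middle of the argument. You write that ``the bell-shape of $\Delta^n a^{(m)}$, together with summation by parts, controls the number of sign changes of the iterated differences $\Delta^n H^{(m)}$ \dots [and] these bounds translate into the required structural properties of $\ph^{(m)}$,'' and you invoke Remark~\ref{rem:ex}. But Remark~\ref{rem:ex} is a recipe for verifying~\eqref{eq:main} when $F$ is already given in closed form; it is not a mechanism for \emph{producing} $\ph$ from a bell-shaped sequence. What the paper actually does after applying Proposition~\ref{prop:post} and summation by parts is: multiply by a polynomial $P_n$ of degree $n$ whose roots are placed exactly at the $n$ sign changes $\alpha_{n,0},\dots,\alpha_{n,n-1}$ of $\Delta^n a(k-n)$, so that $P_n(k)\,\Delta^n a(k-n)\ge 0$; split the resulting expression into a product $G_n^\flat G_n^\sharp$, where $1/G_n^\sharp$ is a Pick function (with representing density $\psi_n^\sharp$ taking values in $\{0,1\}$, jumping at auxiliary points $\beta_{n,j}$) and $G_n^\flat$ is a finite product of Möbius factors (with a piecewise-constant $\psi_n^\flat$ determined by the $\alpha_{n,j}$); and then verify by a delicate cancellation of jumps between the $\alpha_j$'s and $\beta_j$'s that $\psi_n = \psi_n^\flat - \psi_n^\sharp$ is stepwise decreasing on $(-n,0)$, increasing-after-rounding and nonnegative on $(-\infty,-n)$, and increasing-after-rounding and nonpositive on $(0,\infty)$. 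None of this is present in your outline; the ``sign-change budget'' paragraph at the end acknowledges the difficulty but does not supply the argument. Two smaller inaccuracies: the transform in Proposition~\ref{prop:post} integrates over $(0,\infty)$, not $(0,2\pi)$, and Lemma~\ref{lem:lim} is used in Theorem~\ref{thm:gf}, not in the proof of Theorem~\ref{thm:bell} (integrability~\ref{thm:gf:4} for the limit $\ph$ comes out of Lemma~\ref{lem:compact}, not from non-vanishing of $F^{(m)}$).
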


\begin{proof}
The argument is divided into nine steps.

\emph{Step 1.} The generating function of $a(k)$ is given by
\formula{
 F(z) & = \sum_{k = -\infty}^\infty a(k) z^k
}
when $|z| = 1$, $z \ne 1$.

If $a(k)$ is summable, then $F$ is continuous and bounded on the unit circle. In this case, when $\im x > 0$, we define
\formula{
 G(x) & = \int_0^\infty e^{i t x} F(e^{i t}) dt .
}
The inversion formula given in Proposition~\ref{prop:post} reads
\formula[eq:step1]{
 F(e^{i t}) & = \lim_{n \to \infty} \frac{(-1)^n (x_n)\rf{n + 1}}{i n!} \, \Delta^n G(x_n) ,
}
where $t \in (0, \pi)$ and
\formula[eq:xn]{
 x_n & = \frac{n}{2} \biggl(i \cot \frac{t}{2} - 1\biggr) .
}
This will be the starting point for our reasoning.

In the general case, $F$ may fail to be integrable over the unit circle in the complex plane, and we need the following minor modification. Since $a(k)$ converges to zero and it is eventually monotone as $k \to \infty$ and as $k \to -\infty$, the sequence $\Delta a(k)$ is summable, and its generating function is equal to $(z - 1) F(z)$ when $|z| = 1$, $z \ne 1$. It follows that $(z - 1) F(z)$ extends to a continuous functions on the unit circle in the complex plane, and hence we may apply the second part of Proposition~\ref{prop:post} to get the same conclusion: \eqref{eq:step1} holds with $x_n$ defined in~\eqref{eq:xn}.

Note that formula~\eqref{eq:step1} is analogous to the result of Step~1 in the proof of Theorem~3.1, but the definition of $G$ is essentially different there, and appropriately defined integers $j_n$ are used there instead of the complex numbers $x_n$ defined above.

\emph{Step 2.} Suppose that $\im x > 0$. If $a(k)$ is integrable, then, using the definition of $F$ and Fubini's theorem, we obtain
\formula{
 G(x) & = \int_0^\infty e^{i t x} \biggl(\sum_{k = -\infty}^\infty a(k) e^{i k t}\biggr) dt \\
 & = \sum_{k = -\infty}^\infty a(k) \biggl(\int_0^\infty e^{i t (x + k)}\biggr) dt \\
 & = \sum_{k = -\infty}^\infty \frac{i a(k)}{x + k} \, .
}
Evaluating the $n$th iterated difference of both sides with respect to $x$, we find that
\formula[eq:step2:aux]{
 \Delta^n G(x) & = \sum_{k = -\infty}^\infty i a(k) \Delta_x^n \frac{1}{x + k} = \sum_{k = -\infty}^\infty i a(k) \Delta_k^n \frac{1}{x + k} \, ,
}
where $\Delta_x$ and $\Delta_k$ denote the forward difference operators with respect to variables $x$ and $k$, respectively. In the general case, by the definition of $\Delta^n G(x)$ (see Proposition~\ref{prop:post}) and Fubini's theorem, for $n \ge 1$ we have
\formula{
 \Delta^n G(x) & = \int_0^\infty e^{i t x} (e^{i t} - 1)^n \biggl(\sum_{k = -\infty}^\infty a(k) e^{i k t}\biggr) dt \\
 & = \sum_{k = -\infty}^\infty a(k) \biggl(\int_0^\infty (e^{i t} - 1)^n e^{i t (x + k)}\biggr) dt \\
 & = \sum_{k = -\infty}^\infty a(k) \biggl(\int_0^\infty \Delta_k^n e^{i t (x + k)}\biggr) dt \\
 & = \sum_{k = -\infty}^\infty a(k) \Delta_k^n \biggl(\int_0^\infty e^{i t (x + k)}\biggr) dt \\
 & = \sum_{k = -\infty}^\infty a(k) \Delta_k^n \frac{i}{x + k} \, ,
}
and we come to the same conclusion~\eqref{eq:step2:aux}.

Let $P$ be a polynomial of degree at most $n$. As in Step~2 in the proof of Theorem~3.1 in~\cite{kw}, we observe that $(P(k) - P(-x)) / (x + k)$ is a polynomial in $k$ of degree at most $n - 1$, and hence
\formula{
 \Delta_k^n \frac{P(k)}{x + k} & = P(-x) \Delta_k^n \frac{1}{x + k} \, .
}
Therefore,
\formula{
 P(-x) \Delta^n G(x) & = \sum_{k = -\infty}^\infty i a(k) \Delta_k^n \frac{P(k)}{x + k} \, .
}
Applying summation by parts $n$ times to the right-hand side, we conclude that
\formula[eq:step2]{
 P(-x) \Delta^n G(x) & = (-1)^n i \sum_{k = -\infty}^\infty \frac{P(k)}{x + k} \, \Delta^n a(k - n) .
}
Note that in each application of summation by parts, the boundary terms vanish: for every $j = 0, 1, \ldots, n - 1$, the sequence $\Delta_k^{n - 1 - j}(P(x) / (x + k))$ is bounded by a constant times $(1 + |k|)^j$, and the sequence $(1 + |k|)^j \Delta^j a(k - j)$ converges to zero as $k \to \pm \infty$ by Lemma~2.3 in~\cite{kw}.

\emph{Step 3.} Combining \eqref{eq:step1} and \eqref{eq:step2}, we obtain
\formula[eq:step3:1]{
 F(e^{i t}) & = \lim_{n \to \infty} \frac{(x_n)\rf{n + 1}}{n! P_n(-x_n)} \sum_{k = -\infty}^\infty \frac{P_n(k) \Delta^n a(k - n)}{x_n + k} \, ,
}
where $t \in (0, \pi)$, $x_n$ is given by~\eqref{eq:xn}, and $P_n$ is an arbitrary sequence of polynomials of degree at most $n$. We choose these polynomials in such a way that
\formula{
 P_n(k) \Delta^n a(k - n) & \ge 0
}
for every $k \in \Z$: we set
\formula{
 P_n(k) & = \prod_{j = 0}^{n - 1} (\alpha_{n, j} - k) ,
}
where $\alpha_{n, 0}, \alpha_{n, 1}, \ldots, \alpha_{n, n - 1}$ denote the locations of sign changes of the sequence $\Delta^n a(k - n)$. To be specific, we let $\alpha_{n,-1} = -\infty$, and we define inductively
\formula{
 \alpha_{n, j} & = \min \{k > \alpha_{n, j - 1} : (-1)^j \Delta^n a(k - n) > 0 \}
}
for $j = 0, 1, 2, \dots, n - 1$. With this choice of $P_n$, formula~\eqref{eq:step3:1} can be rewritten as
\formula[eq:step3:2]{
 F(e^{i t}) & = \lim_{n \to \infty} \biggl(\frac{x_n + n}{n!} \prod_{j = 0}^{n - 1} \frac{x_n + j}{x_n + \alpha_{n,j}}\biggr) \sum_{k = -\infty}^\infty \frac{P_n(k) \Delta^n a(k - n)}{x_n + k} \, .
}
This part is very similar to Step~3 of the proof of Theorem~3.1 in~\cite{kw}.

\emph{Step 4.} Recall that in formula~\eqref{eq:step3:2}, $t \in (0, \pi)$ and $x_n$ is given by~\eqref{eq:xn}. On the right-hand side of~\eqref{eq:step3:2} only $x_n$ depends on $t$, namely,
\formula{
 x_n & = \frac{n}{2} \biggl(i \cot \frac{t}{2} - 1\biggr) = \frac{n}{2} \biggl(-\frac{e^{i t / 2} + e^{-i t / 2}}{e^{i t / 2} - e^{-i t / 2}} - 1\biggr) = \frac{n e^{i t}}{1 - e^{i t}} \, .
}
When $x \in \C \setminus \Z$, we denote by $G_n(x)$ the expression under the limit in~\eqref{eq:step3:2}, with $x_n$ replaced by $x$:
\formula[eq:step4]{
 G_n(x) & = \biggl(\frac{x + n}{n!} \prod_{j = 0}^{n - 1} \frac{x + j}{x + \alpha_{n,j}}\biggr) \sum_{k = -\infty}^\infty \frac{P_n(k) \Delta^n a(k - n)}{x + k} \, .
}
Thus, $G_n(x_n)$ converges to $F(e^{i t})$ as $n \to \infty$. This part is analogous to Step~4 of the proof of Theorem~3.1 in~\cite{kw}. In the next two steps, we denote the two factors in the definition~\eqref{eq:step4} of $G_n$ by $G^\flat_n$ and $G^\sharp_n$, and we study each of them separately.

\emph{Step 5.} For $x \in \C \setminus \Z$ we denote
\formula[eq:step5:1]{
 G^\sharp_n(x) & = \sum_{k = -\infty}^\infty \frac{P_n(k) \Delta^n a(k - n)}{x + k} \, .
}
Since $P_n(k) \Delta^n a(k - n) \ge 0$, we have $\im G^\sharp_n(x) \le 0$ when $\im x > 0$. Since $G^\sharp_n$ is not identically zero, it follows that $1 / G^\sharp_n$ is a Pick function. Note that $G^\sharp_n$ is real-valued and decreasing on each interval $(k, k + 1)$, $k \in \Z$. Hence, the exponential representation of $1 / G^\sharp_n$ is given by
\formula[eq:step5:2]{
 \frac{1}{G^\sharp_n(x)} & = \exp\biggl(c^\sharp_n + \int_{-\infty}^\infty \biggl(\frac{1}{s - x} - \frac{s}{s^2 + 1}\biggr) \psi^\sharp_n(s) ds\biggr)
}
when $\im x > 0$, where $c^\sharp_n \in \R$ and
\formula{
 \psi^\sharp_n(s) & = \lim_{t \to 0^+} \frac{1}{\pi} \Arg \frac{1}{G^\sharp_n(s + i t)} = \begin{cases} 1 & \text{if $G^\sharp_n(s) < 0$,} \\ 0 & \text{if $G^\sharp_n(s) > 0$.} \end{cases}
}
Furthermore, for each $j \in \Z$ there is a number $\beta_{n, j} \in [-j - 1, -j]$ such that
\formula{
 \{s \in (-j - 1, -j) : G^\sharp_n(s) < 0\} & = (-\beta_{n, j}, -j) ,
}
and hence
\formula[eq:step5:3]{
 \psi^\sharp_n(s) & = \sum_{j = -\infty}^\infty \ind_{[-\beta_{n, j}, -j)}(s)
}
for almost every $s \in \R$. This part is essentially the same as Step~5 in the proof of Theorem~3.1 in~\cite{kw}.

\emph{Step 6.} The other factor in the definition~\eqref{eq:step4} of $G_n$ reads
\formula{
 G^\flat_n(x) & = \frac{x + n}{n!} \prod_{j = 0}^{n - 1} \frac{x + j}{x + \alpha_{n, j}}
}
for $x \in \C \setminus \Z$. Its exponential representation in the upper half-plane $\im x > 0$ follows from the elementary identities
\formula{
 \log(x + n) & = \frac{1}{2} \, \log(n^2 + 1) + \int_{-\infty}^{-n} \biggl(\frac{1}{s - x} - \frac{s}{s^2 + 1}\biggr) ds , \displaybreak[0] \\
 \log \frac{x + j}{x + \alpha_{n,j}} & = \frac{1}{2} \log \frac{j^2 + 1}{\alpha_{n, j}^2 + 1} + \int_{-\alpha_{n,j}}^{-j} \biggl(\frac{1}{s - x} - \frac{s}{s^2 + 1}\biggr) ds .
}
By definition, we have
\formula[eq:step6:1]{
 G^\flat_n(x) = \exp\biggl(c^\flat_n + \int_{-\infty}^\infty \biggl(\frac{1}{s - x} - \frac{s}{s^2 + 1}\biggr) \psi^\flat_n(s) \, ds  \biggr),
}
where $c^\flat_n \in \R$ and
\formula[eq:step6:2]{
 \psi^\flat_n(s) & = \sum_{j = 0}^n \ind_{(-\infty, -j)}(s) - \sum_{j = 0}^{n - 1} \ind_{(-\infty, -\alpha_{n,j})}(s) .
}
This part is identical to Step~6 of the proof of Theorem~3.1 in~\cite{kw}.

\emph{Step 7.} Combining \eqref{eq:step5:2} and \eqref{eq:step6:1} together, we obtain the exponential representation of the function $G_n$ defined in~\eqref{eq:step4}, namely
\formula[eq:step7:0]{
 G_n(x) & = \exp\biggl(a_n + \int_{-\infty}^\infty \biggl(\frac{1}{s - x} - \frac{s}{s^2 + 1}\biggr) \psi_n(s) ds \biggr) ,
}
where $a_n = a^\flat_n - a^\sharp_n \in \R$ and $\psi_n = \psi^\flat_n - \psi^\sharp_n$. In this step we prove that $\psi_n$ is stepwise decreasing on $(-n, 0)$, increasing-after-rounding and nonnegative on $(-\infty, -n)$, and increasing-after-rounding and nonpositive on $(0, \infty)$. This is the counterpart of Step~7 of the proof of Theorem~3.1 in~\cite{kw}, but the analysis requires more care in the present case.

To simplify the notation, in this step only we fix $n$, and we write $\alpha_{n, j} = \alpha_j$ and $\beta_{n, j} = \beta_j$. By~\eqref{eq:step5:2} and~\eqref{eq:step6:2}, we have
\formula{
 \psi_n(s) & = \sum_{j = 0}^n \ind_{(-\infty, -j)}(s) - \sum_{j = 0}^{n - 1} \ind_{(-\infty, -\alpha_j)}(s) - \sum_{j = -\infty}^\infty \ind_{[-\beta_j, -j)}(s) .
}
Recall that $-j - 1 \le -\beta_j \le -j$. Elementary manipulations lead to
\formula{
 \psi_n & = \sum_{j = 0}^{n - 1} \bigl(\ind_{(-\infty, -j)} - \ind_{(-\infty, -\alpha_j)}\bigr) + \biggl(\sum_{j = n}^\infty \ind_{[-j - 1, -j)} - \sum_{j = -\infty}^\infty \ind_{[-\beta_j, -j)}\biggr) \\
 & = \sum_{j = 0}^{n - 1} \bigl(\ind_{(-\infty, -j)} - \ind_{(-\infty, -\alpha_j)} - \ind_{[-\beta_j, -j)}\bigr) \\
 & \hspace*{3em} - \sum_{j = -\infty}^{-1} \ind_{[-\beta_j, -j)} + \sum_{j = n}^\infty \bigl(\ind_{[-j - 1, -j)} - \ind_{[-\beta_j, -j)}\bigr) \\
 & = \sum_{j = 0}^{n - 1} \bigl(\ind_{(-\infty, -\beta_j)} - \ind_{(-\infty, -\alpha_j)}\bigr) - \sum_{j = -\infty}^{-1} \ind_{[-\beta_j, -j)} + \sum_{j = n}^\infty \ind_{[-j - 1, -\beta_j)} .
}
The last two terms on the right-hand side define a function which takes values in $\{0, 1\}$ on $(-\infty, -n)$, which is equal to zero on $(-n, 0)$, and which takes values in $\{-1, 0\}$ on $(0, \infty)$. Let us now inspect the first term, that is,
\formula[eq:step7:2]{
 \tilde{\psi}_n(s) = \sum_{j = 0}^{n - 1} \bigl(\ind_{(-\infty, - \beta_j)}(s) - \ind_{(-\infty, -\alpha_j)}(s)\bigr) .
}
This function only takes integer values, it has $n$ upward jumps at $-\alpha_j$, and it has $n$ downward jumps at $-\beta_j$, where $j = 0, 1, \ldots, n - 1$. In particular, $\tilde{\psi}_n(s) = 0$ outside a finite interval. Since $-\beta_j \in [-n, 0]$ for every $j = 0, 1, \ldots, n - 1$, the function $\tilde{\psi}_n$ is stepwise increasing on $(-\infty, -n)$ and on $(0, \infty)$.

It follows that on $(-\infty, -n)$, the function $\psi_n$ is a sum of a stepwise increasing nonnegative function $\tilde{\psi}_n$ and a function that takes values in $\{0, 1\}$. Thus, $\psi_n$ is increasing-after-rounding and nonnegative on $(-\infty, n)$. Similarly, $\psi_n$ is increasing-after-rounding and nonpositive on $(0, \infty)$. It remains to show that $\psi_n$ is stepwise decreasing on $(-n, 0)$.

We already know that $\psi_n$ takes integer values on $(-n, 0)$ and it has downward jumps at $-\beta_j \in [-j - 1, -j]$ for $j = 0, 1, \ldots, n - 1$. It may have an upward jump at $-\alpha_j$ with $j = 0, 1, \ldots, n - 1$, as long as this number belongs to $(-n, 0)$. Suppose that $-\alpha_j \in (-n, 0)$, that is, $\alpha_j \in \{1, 2, \ldots, n - 1\}$, and write $m = \alpha_j$. By definition, $P_n(m) = 0$, and so the function $G^\sharp_n$ defined in~\eqref{eq:step5:1} does not have a pole at $y = -m$. Therefore, $G^\sharp_n$ is decreasing on $(-m - 1, -m + 1)$.

If $G^\sharp_n(-m) < 0$, then $G^\sharp_n(y) < 0$ for $y \in (-m, -m + 1)$, and so, by definition, $\beta_{m - 1} = m$. In this case the upward jump of $\psi_n$ at $-\alpha_j = -m$ is cancelled by a downward jump at $-\beta_{m - 1} = -m$.

If $G^\sharp_n(-m) \ge 0$, then $G^\sharp_n(y) \ge 0$ for $y \in (-m - 1, -m)$, and hence $\beta_m = m$. In this case the upward jump of $\psi_n$ at $-\alpha_j = -m$ is cancelled by a downward jump at $-\beta_m = -m$.

The above argument shows that every downward jump of $\psi_n$ on $(-n, 0)$ is cancelled by an appropriate upward jump, and so $\psi_n$ is indeed stepwise decreasing on $(-n, 0)$, as desired.

\emph{Step 8.} Recall that $x_n = n e^{i t} / (1 - e^{i t})$, and our ultimate goal is to express $F(e^{i t})$ in terms of $e^{i t}$. For this reason we substitute $x = n z / (1 - z)$ in the exponential representation~\eqref{eq:step7:0} of the function $G_n$. In other words, when $\im z > 0$ we define
\formula{
 F_n(z) & = G_n\biggl(\frac{n z}{1 - z}\biggr) = \exp\biggl(c_n + \int_{-\infty}^\infty \biggl(\frac{1 - z}{s (1 - z) - n z} - \frac{s}{s^2 + 1}\biggr) \psi_n(s) ds\biggr) ,
}
so that $F_n(e^{i t}) = G_n(x_n)$ converges to $F(e^{i t})$ for every $t \in (0, \pi)$. As in Step~9 of the proof of Theorem~3.1 in~\cite{kw}, we substitute $s = n r / (1 - r)$ in the integral on the right-hand side. In order to do so, we denote
\formula{
 \ph_n(r) & = \psi_n\biggl(\frac{n r}{1 - r}\biggr) .
}
It is straightforward to see that $\ph_n$ is stepwise decreasing on $(-\infty, 0)$, increasing-after-rounding and nonpositive on $(0, 1)$, and increasing-after-rounding and nonnegative on $(1, \infty)$. Substitution leads to
\formula[eq:step8]{
 F_n(z) & = \exp\biggl(d_n + \int_{-\infty}^\infty \biggl(\frac{1}{r - z} - \frac{r}{r^2 + 1}\biggr) \ph_n(r) dr\biggr)
}
when $\im z > 0$ for an appropriate $d_n \in \R$; we omit the easy details, which are exactly the same as in the corresponding part of~\cite{kw}.

\emph{Step 9.} We know that $F_n(z)$ is given by~\eqref{eq:step8} when $\im z > 0$, and that $F_n(e^{i t})$ converges to $F(e^{i t})$ as $n \to \infty$ for every $t \in (0, \pi)$. The desired representation~\eqref{eq:gf} of the limiting function $F$ follows now from Lemma~\ref{lem:compact}. Clearly, $\ph$ satisfies conditions~\ref{thm:gf:1} through~\ref{thm:gf:4} of Theorem~\ref{thm:gf}. In order to prove the remaining condition~\ref{thm:gf:5}, observe, as in Remark~\ref{rem:c5}, that the sequence $\Delta a(k)$ is summable and it sums up to $0$, and hence its generating function is continuous on the unit circle in the complex plane, and it takes value $0$ at $z = 1$. The generating function of $\Delta a(k)$ is, however, equal to $(z - 1) F(z)$ when $|z| = 1$, $z \ne 1$, and hence the limit of $(e^{i t} - 1) F(e^{i t})$ as $t \to 0$ is equal to zero.
\end{proof}

%
%

\section{Proof of the sampling theorem}
\label{sec:sample}

In this short section we prove Theorem~\ref{thm:sample}.

\begin{proof}[Proof of Theorem~\ref{thm:sample}]
By Theorems~1.1 and~1.3 in~\cite{ks}, $f$ is bell-shaped if and only if $f = g * h$, where $g$ is an $\amcm$ function and $h$ is a Pólya frequency function (we refer to~\cite{kwasnicki,ks} for a detailed discussion of these classes of functions). Let $\Delta f(x) = f(x + 1) - f(x)$ be the forward difference of the function $f$, and for $n = 0, 1, 2, \ldots$ let $\Delta^n f(x)$ be the $n$th iterated forward difference.

Clearly, $\Delta^n f = (\Delta^n g) * h$, and $\Delta^n g(x)$ converges to zero as $x \to \pm \infty$. We claim that $\Delta^n g$ is a monotone function on each of the intervals
\formula{
 & (-\infty, -n] , \, [-n, -n + 1] , \, [-n + 1, -n + 2] , \, \ldots \, , [-1, 0] , \, [0, \infty) .
}
Let us postpone the proof of this claim and proceed with the proof of the theorem. By monotonicity, $\Delta^n g$ changes sign at most once in each of the finite intervals $[-n, -n + 1]$, $[-n + 1, -n + 2]$, \ldots, $[-1, 0]$. Furthermore, being monotone on the infinite intervals $(-\infty, -n]$ and $[0, \infty)$, and convergent to zero at $\pm \infty$, $\Delta^n g$ has constant sign on these two intervals. Hence, $\Delta^n g$ changes sign at most $n$ times.

Since $h$ is a Pólya frequency function, it has the variation diminishing property: the convolution with $h$ does not increase the number of sign changes. Hence, also $\Delta^n f = (\Delta^n g) * h$ changes sign at most $n$ times. (Strictly speaking, the variation diminishing property applies to bounded functions, so here we first approximate $\Delta^n f$ by a sequence of bounded functions, then apply the variation diminishing property to these approximations, and finally pass to the limit. We omit the details.)

The sequence $\Delta^n f(k)$ changes sign at most as many times as the function $\Delta^n f(x)$, and we have already proved that $\Delta^n f(x)$ changes sign at most $n$ times. It follows that for every $n = 0, 1, 2, \ldots$\,, $\Delta^n f(k)$ changes sign at most $n$ times. By the discrete analogue of Rolle's theorem (see Section~\ref{sec:gen}) and induction, $\Delta^n f(k)$ changes sign at least $n$ times, and thus $f(k)$ is a bell-shaped sequence.

It remains to prove our claim. We tacitly assume that $g$ is upper semi-continuous at $0$, with possibly $g(0) = \infty$ (also when $g$ contains an atom at $0$; we refer to~\cite{kwasnicki,ks} for a detailed discussion). By Bernstein's theorem, for $x > 0$ we have
\formula{
 g(x) & = \int_{(0, \infty)} e^{-s x} \mu_+(ds) , & g(-x) & = \int_{(0, \infty)} e^{-s x} \mu_-(ds) ,
}
where $\mu_+$ and $\mu_-$ are nonnegative measures on $(0, \infty)$ such that the above integrals converge for every $x > 0$. We can extend these equalities to $x \ge 0$ by adding an appropriate atom of $\mu_+$ and $\mu_-$ at $\infty$ and extending the range of integration to $(0, \infty]$ (with $e^{-\infty} = 0$ and $e^{-\infty \cdot 0} = 1$). By simple induction,
\formula{
 (-1)^n \Delta^n g(x) & = \int_{(0, \infty]} e^{-s x} (1 - e^{-s})^n \mu_+(ds) , \\
 \Delta^n g(-n - x) & = \int_{(0, \infty]} e^{-s x} (1 - e^{-s})^n \mu_-(ds)
}
for $x \ge 0$. In particular, $\Delta^n g$ is absolutely monotone on $(-\infty, -n]$, and $(-1)^n \Delta^n g$ is completely monotone on $[0, \infty)$. This proves our claim for the two infinite intervals.

Finite intervals are now handled by induction: we prove that $(-1)^j \Delta^n g$ is increasing on $[-n + j - 1, -n + j]$ for $j = 1, 2, \ldots, n$. The base case $n = 0$ is an empty statement. Assume the inductive hypothesis for a given $n = 0, 1, 2, \ldots$\, By what we have proved for the two infinite intervals, $(-1)^j \Delta^n g$ is increasing on $[-n + j - 1, -n + j]$ also for $j = 0$ and $j = n + 1$. It follows that for $j = 0, 1, \ldots, n$ and $x \in [-n + j - 1, -n + j]$,
\formula{
 (-1)^{j + 1} \Delta^{n + 1} g(x) = (-1)^{j + 1} \Delta^n g(x + 1) + (-1)^j \Delta^n g(x)
}
is an increasing function. This completes the inductive step, and our claim follows.
\end{proof}

%
%

\appendix

\section{Auxiliary `approximation to identity' lemma}
\label{app:a}

The following result is used in the proof of Proposition~\ref{prop:post}

\begin{lemma}
\label{lem:approximation}
Suppose that $\ph$ is a continuous integrable real-valued function on $[0, \infty)$ such that $\ph(s)$ converges to $0$ as $s \to \infty$, $|\ph|$ attains a strict global maximum at a point $t \in (0, \infty)$, and $\ph(t) > 0$. Denote
\formula{
 M_n & = \int_0^\infty (\ph(s))^n ds .
}
If $F$ is an integrable function on the unit circle in the complex plane and $F$ is continuous at $e^{i t}$, then
\formula{
 \lim_{n \to \infty} \frac{1}{M_n} \int_0^\infty (\ph(s))^n F(e^{i s}) ds & = F(e^{i t}) .
}
Instead of integrability of $F$ it is sufficient to assume that $(\ph(t))^n F(e^{i t})$ is integrable over $(0, \infty)$ for $n$ large enough.
\end{lemma}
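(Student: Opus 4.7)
The plan is to prove this by a standard Laplace-type concentration argument: as $n \to \infty$, the normalized integrand $(\ph(s))^n / M_n$ concentrates all its mass near the point $t$ where $|\ph|$ attains its strict global maximum, and the value of the limit is then dictated by continuity of $F$ at $e^{it}$. The proof will proceed by splitting the integral over an interval $I_\delta = (t-\delta, t+\delta)$ and its complement, and showing that the contribution from $I_\delta^c$ vanishes in the limit, while on $I_\delta$ the function $F(e^{is})$ is effectively constant.

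First, I would establish the crucial off-diagonal decay. By continuity of $|\ph|$, by the strict global maximum condition, and by $\ph(s) \to 0$ as $s \to \infty$, for every $\delta > 0$ there exists $r_\delta < \ph(t)$ such that $|\ph(s)| \le r_\delta$ for all $s \in [0,\infty) \setminus I_\delta$. Since $\ph$ is integrable and bounded by $\|\ph\|_\infty$, this yields
\formula{
 \biggl|\int_{[0,\infty) \setminus I_\delta} (\ph(s))^n F(e^{is}) ds\biggr| & \le r_\delta^{n-1} \int_0^\infty |\ph(s)| |F(e^{is})| ds ,
}
and the last integral is finite (for $n$ large enough in the general case, since one can dominate it by $r_{2\pi k}^{n-1} \|F\|_{L^1}$ on each period $[2\pi k, 2\pi(k+1)]$ and sum using integrability of $\ph$). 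To compare with $M_n$, I would use continuity of $\ph$ at $t$: for any $\eps > 0$, there is $\eta \in (0, \delta)$ with $\ph(s) \ge \ph(t) - \eps$ on $(t-\eta, t+\eta)$, giving the lower bound $M_n \ge 2\eta (\ph(t) - \eps)^n$. Choosing $\eps$ so small that $r_\delta + \eps < \ph(t)$, the ratio of the off-diagonal contribution to $M_n$ is bounded by a constant times $(r_\delta / (\ph(t)-\eps))^n$, which tends to $0$. In particular, taking $F \equiv 1$ this also gives $\int_{I_\delta} (\ph(s))^n ds / M_n \to 1$.

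Second, I would exploit the continuity of $F$ at $e^{it}$: fix $\eta > 0$ and choose $\delta > 0$ small enough that $|F(e^{is}) - F(e^{it})| < \eta$ for $s \in I_\delta$. Then I decompose
\formula{
 \frac{1}{M_n} \int_0^\infty (\ph(s))^n F(e^{is}) ds - F(e^{it}) & = \frac{1}{M_n} \int_{I_\delta} (\ph(s))^n (F(e^{is}) - F(e^{it})) ds \\
 & \quad + F(e^{it}) \biggl(\frac{1}{M_n}\int_{I_\delta} (\ph(s))^n ds - 1\biggr) \\
 & \quad + \frac{1}{M_n} \int_{[0,\infty) \setminus I_\delta} (\ph(s))^n F(e^{is}) ds .
}
The first term is bounded by $\eta \cdot M_n^{-1} \int_{I_\delta} |\ph(s)|^n ds$, which (by the same argument as for $M_n$ applied to $|\ph|^n$, noting $\ph > 0$ on $I_\delta$ for small $\delta$) is at most $\eta$ times a quantity that tends to $1$; the second and third terms tend to $0$ by the previous step. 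Letting first $n \to \infty$ and then $\eta \to 0$ gives the claim.

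The main technical obstacle is the book-keeping that $\ph$ may take negative values, so $(\ph(s))^n$ is not a priori nonnegative; however, because $\ph$ is \emph{strictly} maximized in absolute value at $t$ with $\ph(t) > 0$, one can shrink $\delta$ so that $\ph > 0$ on $I_\delta$, after which the lower bound on $M_n$ and the asymptotic $\int_{I_\delta} \ph^n / M_n \to 1$ become genuine Laplace-type statements about a positive integrand. The variant under the weaker integrability hypothesis $(\ph(s))^n F(e^{is}) \in L^1$ for $n$ large enough requires no change: the off-diagonal estimate above is replaced by the elementary bound $|(\ph(s))^n F(e^{is})| \le (r_\delta/\ph(t))^{n-N} \cdot |(\ph(s))^N F(e^{is})| \cdot \ph(t)^{n-N}$ for any fixed $N$ large enough that $(\ph(s))^N F(e^{is})$ is integrable, and one proceeds as before.
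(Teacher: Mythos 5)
Your proposal is correct and takes essentially the same approach as the paper's proof: a Laplace-type concentration argument splitting the integral over $(t-\delta, t+\delta)$ and its complement, bounding the off-diagonal contribution by $(r_\delta)^{n-1}$ times a fixed integral, bounding $M_n$ from below by $2\eta(\ph(t)-\eps)^n$ via continuity at $t$, and using continuity of $F$ at $e^{it}$ on the near-diagonal piece. The paper decomposes $\int \ph^n (F - F(e^{it}))$ directly into near and far parts rather than adding and subtracting $F(e^{it}) M_n^{-1}\int_{I_\delta}\ph^n$ as you do, but this is cosmetic; the estimates used are the same.
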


\begin{proof}
Our goal is to prove that
\formula{
 \lim_{n \to \infty} \frac{1}{M_n} \int_0^\infty (\ph(s))^n (F(e^{i s}) - F(e^{i t})) ds & = 0 .
}
Fix $\eps > 0$, and choose $\delta > 0$ small enough, so that $|F(e^{i s}) - F(e^{i t})| < \eps$ and $\ph(s) > 0$ whenever $|s - t| < \delta$. By assumptions, there is $\thet > 0$ such that $|\ph(s)| \le (1 - 2 \thet) \ph(t)$ when $s \ge 0$ and $|s - t| \ge \delta$. Furthermore, we can find $\eta > 0$ such that $\ph(s) > (1 - \thet) \ph(t)$ when $|s - t| < \eta$. On one hand, we have
\formula*[eq:approximation:1]{
 \hspace*{3em} & \hspace*{-3em} \biggl|\int_{[0, \infty) \setminus (t - \delta, t + \delta)} (\ph(s))^n ds\biggr| \\
 & \le \int_{[0, \infty) \setminus (t - \delta, t + \delta)} |\ph(s)|^n ds \\
 & \le ((1 - 2 \thet) \ph(t))^{n - 1} \int_{[0, \infty) \setminus (t - \delta, t + \delta)} |\ph(s)| ds
}
and, in the same vein,
\formula*[eq:approximation:2]{
 \hspace*{3em} & \hspace*{-3em} \biggl|\int_{[0, \infty) \setminus (t - \delta, t + \delta)} (\ph(s))^n (F(e^{i s}) - F(e^{i t})) ds\biggr| \\
 & \le \int_{[0, \infty) \setminus (t - \delta, t + \delta)} |\ph(s)|^n |F(e^{i s}) - F(e^{i t})| ds \\
 & \le ((1 - 2 \thet) \ph(t))^{n - 1} \int_{[0, \infty) \setminus (t - \delta, t + \delta)} |\ph(s)| |F(e^{i s}) - F(e^{i t})| ds .
}
On the other one, we have
\formula[eq:approximation:3]{
 M_n & \ge \int_{(t - \delta, t + \delta)} (\ph(s))^n ds \ge \int_{(t - \eta, t + \eta)} (\ph(s))^n ds \ge 2 \eta ((1 - \thet) \ph(t))^n ,
}
and
\formula[eq:approximation:4]{
 \biggl|\int_{(t - \delta, t + \delta)} (\ph(s))^n (F(e^{i s}) - F(e^{i t})) ds \biggr| & \le \eps \int_{(t - \delta, t + \delta)} (\ph(s))^n ds \le \eps M_n .
}
Since $(1 - \thet) \ph(t) > (1 - 2 \thet) \ph(t)$, the right-hand sides of~\eqref{eq:approximation:1} and~\eqref{eq:approximation:2} are negligible compared to the right-hand side of~\eqref{eq:approximation:3} as $n \to \infty$. Therefore,
\formula*{
 \lim_{n \to \infty} \frac{1}{M_n} \int_{[0, \infty) \setminus (t - \delta, t + \delta)} (\ph(s))^n ds = 0 , \\
 \lim_{n \to \infty} \frac{1}{M_n} \int_{(t - \delta, t + \delta)} (\ph(s))^n ds = 1 ,
}
and
\formula{
 \lim_{n \to \infty} \frac{1}{M_n} \biggl|\int_{[0, \infty) \setminus (t - \delta, t + \delta)} (\ph(s))^n (F(e^{i s}) - F(e^{i t})) ds\biggr| & = 0 .
}
By~\eqref{eq:approximation:4}, we also have
\formula{
 \limsup_{n \to \infty} \frac{1}{M_n} \biggl|\int_{(t - \delta, t + \delta)} (\ph(s))^n (F(e^{i s}) - F(e^{i t})) ds \biggr| & \le \eps .
}
The last two formulae imply that
\formula{
 \limsup_{n \to \infty} \frac{1}{M_n} \biggl|\int_0^\infty (\ph(s))^n (F(e^{i s}) - F(e^{i t})) ds\biggr| & \le \eps .
}
Since $\eps > 0$ is arbitrary, the desired result follows.

If we relax the integrability assumption on $F$ to integrability of $(\ph(t))^m F(e^{i t})$ over $(0, \infty)$ for some $m$, then we only need to replace the bound~\eqref{eq:approximation:2} by
\formula{
 \hspace*{3em} & \hspace*{-3em} \biggl|\int_{[0, \infty) \setminus (t - \delta, t + \delta)} (\ph(s))^n (F(e^{i s}) - F(e^{i t})) ds\biggr| \\
 & \le ((1 - 2 \thet) \ph(t))^{n - m} \int_{[0, \infty) \setminus (t - \delta, t + \delta)} |\ph(s)|^m |F(e^{i s}) - F(e^{i t})| ds .
}
Otherwise, the proof is exactly the same.
\end{proof}

%
%

\section{Holomorphic character of two functions}
\label{app:b}

Below we give a detailed derivation of the representation formula for two holomorphic functions which appear in Examples~\ref{ex:rw} and~\ref{ex:rrw}.

\subsection{Independent simple random walks}

We begin with the function $F_1$ defined in Example~\ref{ex:rrw}. For simplicity, we drop the index $1$ from the notation, that is, we consider the function
\formula{
 F(z) & = \frac{2 + i (z^{1/2} - z^{-1/2})}{z + 1} \, .
}
Our goal is to prove that $F$ has the form given in~\eqref{eq:main}, so that Theorem~\ref{thm:main} applies. We have already observed that $F$ is a holomorphic function in the upper complex half-plane $\im z > 0$. Observe that in this region
\formula{
 2 + i (z^{1/2} - z^{-1/2}) & = 2 - (-z)^{1/2} - (-z)^{-1/2} = \bigl(i (-z)^{1/4} - i (-z)^{-1/4}\bigr)^2 .
}
Furthermore, $\re (i (-z)^{1/4} - i (-z)^{-1/4}) > 0$, and hence
\formula{
 \Arg\bigl(2 + i (z^{1/2} - z^{-1/2})\bigr) & = 2 \Arg \bigl(i (-z)^{1/4} - i (-z)^{-1/4}\bigr)
}
is well-defined and takes values in $(-\pi, \pi)$. Clearly, $\Arg(z + 1)$ is well-defined and takes values in $(0, \pi)$ when $\im z > 0$. It follows that the argument of $F$ has a continuous version in the upper complex half-plane $\im z > 0$, which we denote by $\pi \Phi$, and
\formula{
 \Phi(z) & = \tfrac{1}{\pi} \Arg\bigl(2 + i (z^{1/2} - z^{-1/2})\bigr) - \tfrac{1}{\pi} \Arg(z + 1) .
}
Furthermore, $\Phi(z) \in [-2, 1]$ for every $z$, and hence $\Phi$ is a bounded harmonic function in the upper complex half-plane. By Poisson's representation formula,
\formula{
 \Phi(z) & = \frac{1}{\pi} \int_{-\infty}^\infty \im \frac{1}{s - z} \, \ph(s) ds ,
}
where $\ph(s) = \lim_{t \to 0^+} \Phi(s + i t)$ is the boundary limit of $\Phi$, defined for almost every $s \in \R$. Recall that $\pi \Phi$ is the imaginary part of the continuous logarithm of $F$. It follows that for some constant $c \in \R$,
\formula{
 F(z) & = \exp\biggl(c + \int_{-\infty}^\infty \biggl(\frac{1}{s - z} - \frac{s}{s^2 + 1}\biggr) \ph(s) ds\biggr) ,
}
that is, $F$ is indeed given by~\eqref{eq:main}. Furthermore, for $s \ne -1$, we have
\formula{
 \lim_{t \to 0^+} \Arg(s + i t + 1) & = \pi \ind_{(-\infty, -1)}(s) .
}
When $s > 0$, clearly
\formula{
 \lim_{t \to 0^+} \Arg\bigl(2 + i ((s + i t)^{1/2} - (s + i t)^{-1/2})\bigr) & = \Arg\bigl(2 + i (s^{1/2} - s^{-1/2})\bigr) \\
 & = \arctan \frac{s^{1/2} - s^{-1/2}}{2} \, .
}
Finally, when $s < 0$ and $s \ne -1$, we have
\formula{
 \hspace*{3em} & \hspace*{-3em} \lim_{t \to 0^+} \Arg\bigl(2 + i ((s + i t)^{1/2} - (s + i t)^{-1/2})\bigr) \\
 & = \lim_{t \to 0^+} \Arg\bigl(2 - (-s)^{1/2} - (-s)^{-1/2} + \tfrac{1}{2} i (-s)^{-1/2} t - \tfrac{1}{2} i (-s)^{-3/2} t + O(t^2)\bigr) \\
 & = \pi \ind_{(-\infty, -1)}(s) - \pi \ind_{(-1, 0)}(s) .
}
It follows that
\formula{
 \pi \ph(s) & = \lim_{t \to 0^+} \pi \Phi(s + i t) \\
 & = \lim_{t \to 0^+} \Arg\bigl(2 + i ((s + i t)^{1/2} - (s + i t)^{-1/2})\bigr) - \lim_{t \to 0^+} \Arg(s + i t + 1)
}
has the desired form, namely,
\formula{
 \ph(s) & = \begin{cases}
  0 & \text{if $s < -1$;} \\
  -1 & \text{if $-1 < s < 0$;} \\
  \tfrac{1}{\pi} \arctan (\tfrac{1}{2} (s^{1/2} - s^{-1/2})) & \text{if $s > 0$,}
 \end{cases}
}
as claimed in Example~\ref{ex:rrw}. We note that in fact we have $\ph(s) \in [-1, 1]$ for almost every $s \in \R$, and thus $\Phi(z) \in (-1, 1)$ for every $z$ in the upper complex half-plane $\im z > 0$, that is, $\Phi$ is in fact the principal branch of the argument of $F(z)$.

\subsection{Two-dimensional simple random walk}

We turn to the properties of the function $F_1$ introduced in Example~\ref{ex:rw}. Again we drop the index $1$ from the notation, that is, we consider the function
\formula{
 F(z) & = 2 - \frac{z + z^{-1}}{2} - \sqrt{1 - \frac{z + z^{-1}}{2}} \sqrt{3 - \frac{z + z^{-1}}{2}} \, .
}
In Example~\ref{ex:rw} we argued that $F$ is a holomorphic function in the upper complex half-plane $\im z > 0$, and in this region $1 - \tfrac{1}{2} (z + z^{-1}) \in \C \setminus (-\infty, 0]$. If we write $w = 1 - \tfrac{1}{2} (z + z^{-1})$, then $w \in \C \setminus (-\infty, 0]$, and
\formula{
 F(z) & = 1 + w - \sqrt{w} \sqrt{w + 2} = \frac{(1 + w)^2 - w (w + 2)}{1 + w + \sqrt{w} \sqrt{w + 2}} = \frac{1}{1 + w + \sqrt{w} \sqrt{w + 2}} \, .
}
If $w > 0$, then $1 + w + \sqrt{w} \sqrt{w + 2} > 0$. Suppose that $\im w > 0$. Then $0 < \Arg(w + 2) < \Arg w < \pi$, and hence $0 < \Arg(\sqrt{w} \sqrt{w + 2}) < \pi$. Therefore, $\im(1 + w + \sqrt{w} \sqrt{w + 2}) > 0$. Similarly, if $\im w < 0$, then $\im(1 + w + \sqrt{w} \sqrt{w + 2}) < 0$. It follows that $F(z) \in \C \setminus (-\infty, 0]$. Consequently, $\Phi(z) = \tfrac{1}{\pi} \Arg F(z)$ is well-defined in the upper complex half-plane with the principal branch of the logarithm, and $\Phi(z) \in [-1, 1]$. As in the previous section, we use Poisson's representation formula for the bounded harmonic function $\Phi$ to find that $F$ is indeed given by~\eqref{eq:main}, with
\formula{
 \pi \ph(s) & = \lim_{t \to 0^+} \Arg F(s + i t) .
}
In order to evaluate the above limit, we observe that if $z = s + i t$ and, as before, $w = 1 - \tfrac{1}{2} (z + z^{-1})$, then
\formula{
 w & = 1 - \tfrac{1}{2} (s + s^{-1}) - \tfrac{1}{2} i t + \tfrac{1}{2} i s^{-2} t + O(t^2)
}
as $t \to 0^+$. Thus, as $t \to 0^+$, $w$ converges to $1 - \tfrac{1}{2} (s + s^{-1})$. Additionally, $\im w$ is negative for small $t > 0$ when $|s| > 1$ and positive for small $t > 0$ when $|s| < 1$.

In terms of variable $w$ introduced above, we have
\formula{
 \lim_{t \to 0^+} \Arg F(s + i t) & = -\lim_{t \to 0^+} \Arg\bigl(1 + w + \sqrt{w} \sqrt{w + 2}\bigr) .
}
When $s < 0$, the limit of $w$ is positive, and therefore $F(s + i t)$ converges to a positive limit as $t \to 0^+$. That is,
\formula{
 \lim_{t \to 0^+} \Arg F(s + i t) & = 0 .
}
When $0 < s < 3 - 2 \sqrt{2}$, then $w$ converges to a number in $(-\infty, -2)$, so that $F(z)$ converges to a negative number. Furthermore, the imaginary part of $w$ is positive for $t$ small enough. Therefore,
\formula{
 \lim_{t \to 0^+} \Arg F(s + i t) & = -\lim_{t \to 0^+} \Arg\bigl(1 + w + \sqrt{w} \sqrt{w + 2}\bigr) = -\pi .
}
Similarly, when $s > 3 + 2 \sqrt{2}$, then again $w$ converges to a number in $(-\infty, -2)$ and $F(z)$ converges to a negative number, but the imaginary part of $w$ is negative for $t$ small enough. It follows that
\formula{
 \lim_{t \to 0^+} \Arg F(s + i t) & = -\lim_{t \to 0^+} \Arg\bigl(1 + w + \sqrt{w} \sqrt{w + 2}\bigr) = \pi .
}
Finally, when $3 - 2 \sqrt{2} < s < 3 + 2 \sqrt{2}$ and $s \ne 1$, then the limit of $w$ lies in $(-2, 0)$, and hence
\formula{
 \lim_{t \to 0^+} \Arg F(s + i t) & = -\lim_{t \to 0^+} \Arg\bigl(1 + w + i \sqrt{-w} \sqrt{w + 2}\bigr) \\
 & = -\Arg\Bigl(2 - \tfrac{1}{2} (s + s^{-1}) + i \sqrt{\tfrac{1}{2} (s^{-1} + s) - 1} \sqrt{3 - \tfrac{1}{2} (s^{-1} + s)}\Bigr).
}
The above expression belongs to $(-\pi, 0)$ when $s < 1$ and to $(0, \pi)$ when $s > 1$. We conclude that $\ph$ indeed has the desired form:
\formula{
 \ph(s) & = \begin{cases}
  0 \vphantom{\dfrac{\sqrt{s^{-1}}}{s^{-1}}} & \text{if $s < 0$;} \\
  -1 \vphantom{\dfrac{\sqrt{s^{-1}}}{s^{-1}}} & \text{if $0 < s < 3 - 2 \sqrt{2}$;} \\
  -\dfrac{1}{\pi} \arccot \dfrac{4 - s - s^{-1}}{\sqrt{s + s^{-1} - 2} \sqrt{6 - s - s^{-1}}} & \text{if $3 - 2 \sqrt{2} < s < 1$;} \\
  \dfrac{1}{\pi} \arccot \dfrac{4 - s - s^{-1}}{\sqrt{s + s^{-1} - 2} \sqrt{6 - s - s^{-1}}} & \text{if $1 < s < 3 - \sqrt{2}$;} \\
  1 \vphantom{\dfrac{\sqrt{s^{-1}}}{s^{-1}}} & \text{if $s > 3 + 2 \sqrt{2}$.}
 \end{cases}
}

%
%

\subsection*{Acknowledgements}

We thank the anonymous referee for helpful comments.

%
%

%
%


\begin{thebibliography}{00}

\bibitem{aj}
\textsc{Rafik Aguech, Wissem Jedidi,}
\emph{New characterizations of completely monotone functions and Bernstein functions, a converse to Hausdorff's moment characterization theorem.}
Arab J. Math. Sci. 25(1) (2019): 57--82.

\bibitem{bondesson}
\textsc{Lennart Bondesson,}
\emph{A characterization of first passage time distributions for random walks.}
Stochastic Process. Appl. 39 (1991): 81--88.

\bibitem{bondesson:book}
\textsc{Lennart Bondesson,}
\emph{Generalized gamma convolutions and related classes of distributions and densities.}
Lecture Notes in Statistics 76, Springer--Verlag, New York, 1992.

\bibitem{bgkp}
\textsc{Alexander Belton, Dominique Guillot, Apoorva Khare, Mihai Putinar,}
\emph{Totally positive kernels, Pólya frequency functions, and their transforms.}
J.~Anal. Math. 150 (2023): 83--158.

\bibitem{donoghue}
\textsc{William F.~Donoghue Jr.,}
\emph{Monotone Matrix Functions and Analytic Continuation.}
Springer, Berlin, 1974.

\bibitem{gm}
\textsc{Mariano Gasca, Charles A. Micchelli} (eds.),
\emph{Total Positivity and Its Applications.}
Mathematics and Its Applications 359, Springer, Utrecht, 1996.

\bibitem{gawronski}
\textsc{Wolfgang Gawronski,}
\emph{On the bell-shape of stable densities.}
Ann. Probab. 12(1) (1984): 230--242.

\bibitem{hirschman}
\textsc{I.~I.~Hirschman, Jr.,}
\emph{Proof of a conjecture of I.~J.~Schoenberg.}
Proc. Amer. Math. Soc.~1 (1950): 63--65.

\bibitem{js}
\textsc{Wissem Jedidi, Thomas Simon,}
\emph{Diffusion hitting times and the bell-shape.}
Stat. Probab. Lett. 102 (2015): 38--41.

\bibitem{karlin}
\textsc{Samuel Karlin,}
\emph{Total positivity. Vol. 1.}
Stanford University Press, Stanford, CA, 1968.

\bibitem{kwasnicki}
\textsc{Mateusz Kwaśnicki,}
\emph{A new class of bell-shaped functions.}
Trans. Amer. Math. Soc. 373(4) (2020): 2255--2280.

\bibitem{ks}
\textsc{Mateusz Kwaśnicki, Thomas Simon,}
\emph{Characterisation of the class of bell-shaped functions.}
Math. Zeitschrift 301(3) (2022): 2659--2683.

\bibitem{kw}
\textsc{Mateusz Kwaśnicki, Jacek Wszoła,}
\emph{Bell-shaped sequences.}
Studia Math. 271(2) (2023): 151--185.

\bibitem{pinkus}
\textsc{Allan Pinkus,}
\emph{Totally positive matrices.}
Cambridge Tracts in Mathematics 181, Cambridge University Press, Cambridge, 2010.

\bibitem{remling}
\textsc{Christian Remling,}
\emph{Spectral Theory of Canonical Systems.}
De Gruyter, Berlin-Boston, 2018.

\bibitem{ssv}
\textsc{René L.~Schilling, Renming Song, Zoran Vondraček,}
\emph{Bernstein Functions: Theory and Applications.}
Studies in Math. 37, De Gruyter, Berlin, 2012.

\bibitem{simon}
\textsc{Thomas Simon,}
\emph{Positive stable densities and the bell-shape.}
Proc. Amer. Math. Soc. 143(2) (2015): 885--895.

\bibitem{sv}
\textsc{Fred W. Steutel, Klaas van Harn,}
\emph{Discrete Analogues of Self-Decomposability and Stability.}
Ann. Probab. 7(5) (1979): 893--899.

\bibitem{wszola}
\textsc{Jacek Wszoła,}
\emph{First passage locations for two-dimensional lattice random walks and the bell-shape.}
Preprint, 2025, arXiv:2501.14393.

\end{thebibliography}
\end{document}